\DeclareMathOperator{\Inndiag}{Inndiag}
\DeclareMathOperator{\Sym}{Sym}
\DeclareMathOperator{\Ord}{Ord}
\DeclareMathOperator{\Frat}{Frat}
\DeclareMathOperator{\SL}{SL}
\DeclareMathOperator{\Ug}{U}
\DeclareMathOperator{\GL}{GL}
\DeclareMathOperator{\Sp}{Sp}
\DeclareMathOperator{\PSp}{PSp}
\DeclareMathOperator{\POmega}{P\Omega}
\DeclareMathOperator{\Stab}{Stab}
\DeclareMathOperator{\Alt}{Alt}
\DeclareMathOperator{\Aut}{Aut}
\DeclareMathOperator{\Lg}{L}
\DeclareMathOperator{\Out}{Out}
\DeclareMathOperator{\Syl}{Syl}
 \newcommand{\ol}{\overline}
\newtheorem{thma}{Theorem}
\newtheorem{cora}[thma]{Corollary}
\newtheorem{thm}{Theorem}[section]
\newtheorem{cor}[thm]{Corollary}
 \newtheorem{lemma}[thm]{Lemma}
\newtheorem{prop}[thm]{Proposition}
\numberwithin{equation}{section}
\begin{document}
\bibliographystyle{amsplain}
\title[Proportions of $p$-elements]{On the proportion of $p$-elements in a finite group, and a modular Jordan type theorem\thanks{Supported by the Engineering and Physical Sciences Research Council, grant number EP/T017619/1.}}
\author{Gareth Tracey}
\address{School of Mathematics,
University of Birmingham,
Edgbaston,
Birmingham,
B15 2TT\\
United Kingdom}
\email{g.tracey@bham.ac.uk}

\subjclass{20D20, 20D06, 20C20}

\begin{abstract}
In 1878, Jordan proved that if a finite group $G$ has a faithful representation of dimension $n$ over $\mathbb{C}$, then $G$ has a normal abelian subgroup with index bounded above by a function of $n$. The same result fails if one replaces $\mathbb{C}$ by a field of positive characteristic, due to the presence of large unipotent and/or Lie type subgroups. For this reason, a long-standing problem in group and representation theory has been to find the ``correct analogue" of Jordan's theorem in characteristic $p>0$. Progress has been made in a number of different directions, most notably by Brauer and Feit in 1966; by Collins in 2008; and by Larsen and Pink in 2011. With a 1968 theorem of Steinberg in mind (which shows that a significant proportion of elements in a simple group of Lie type are unipotent), we prove in this paper that if a finite group $G$ has a faithful representation over a field of characteristic $p$, then a significant proportion of the elements of $G$ must have $p$-power order. We prove similar results for permutation groups, and present a general method for counting $p$-elements in finite groups. All of our results are best possible.
\end{abstract}

\maketitle
\section{Introduction}
The study of the dimensions of the faithful representations of a finite group $G$ (over an arbitrary field) has a long and rich history. Amongst the early results in this direction, one of the most famous is due to Jordan \cite{Jordan}, who proved in 1878 that if $n$ is the dimension of a faithful $\mathbb{C}[G]$-representation, then $G$ has an abelian normal subgroup $A$ whose index is bounded above by a function $J(n)$ of $n$. The function $J(n)$ was shown to be at most $(n+1)!$ (for large enough $n$) by Collins \cite{CollinsC} in 2007 (this bound is best possible -- realised by the symmetric group acting on the fully deleted permutation module over $\mathbb{C}$). 

If we replace $\mathbb{C}$ by a field $\mathbb{F}$ of characteristic $p>0$, then Jordan's theorem fails, even if $\mathbb{F}$ is algebraically closed. For example, the quasisimple group $\SL_2(\mathbb{F}_{p^n})$ embeds into $\GL_{2n}(\mathbb{F}_p)\le \GL_{2n}(\ol{\mathbb{F}_p})$, and the index of the largest abelian normal subgroup of $\SL_2(\mathbb{F}_{p^n})$ depends on the prime $p$. 

For this reason, finding the ``correct" modular analogue of Jordan's theorem has been a well-studied problem in representation theory for a number of years. In one direction, Brauer and Feit \cite{BF} (in proving a conjecture of Kegel) showed in 1966 that if $\mathbb{F}$ is a field of characteristic $p>0$ and $G$ is a finite subgroup of $\mathrm{GL}_n(\mathbb{F})$ with a Sylow $p$-subgroup of order $p^m$, then $G$ has an abelian normal subgroup of index bounded above by a function of $m$ and $n$. In another direction, Collins \cite{Collinsp} relaxed the requirement that the normal subgroup $A$ of $G$ be abelian, and proved that if $\mathbb{F}$ and $G\le \mathrm{GL}_n(\mathbb{F})$ are as above, with $O_p(G)=1$, then $G$ has an abelian normal subgroup $A$ with $E_p(G)A$ having index bounded above by a function of $n$ (the group $E_p(G)$ is the subgroup of $G$ generated by the quasisimple subnormal subgroups of $G$ which are Lie type groups in characteristic $p$). A remarkable paper by Larsen and Pink \cite{LP} later proved a similar result, but without relying on the Classification of Finite Simple Groups (henceforth abbreviated to CFSG). See the preprint \cite{Breuillard} by Breuillard for another approach to the problem.

As the discussion above alludes to, the obstacle to Jordan's theorem in characteristic $p>0$ is the presence of large Lie type subgroups in characteristic $p$, and large unipotent subgroups.
By a 1968 theorem of Steinberg \cite{S}, one property that is shared by these two classes of linear groups is that a ``large" proportion of their elements have $p$-power order. Of course, all elements in a finite unipotent linear group have $p$-power order; Steinberg's theorem shows that around $1/q^{f(n)}$ elements of a Lie type group $X=X_n(q)$ in characteristic $p$ have $p$-power order, where $f(n)$ is a linear function of the Lie rank $n$ of $X$ (we discuss this in more detail below).

In this paper, we prove a new modular analogue of Jordan's theorem by showing that although it is not true that a significant number of elements in a finite linear group in characteristic $p>0$ are contained in an abelian normal subgroup, it is true that a significant number of elements have $p$-power order. This also shows that Steinberg's result is indicative of the general case for finite linear groups in characteristic $p$.

To state our theorem, we fix the following notation: for a positive real number $m$, define $h_p(m):=3^{(m-1)/2}$ if $p=2$; $h_p(m):=20^{(m-1)/4}$ if $p=3$; and $h_p(m):=(p-1)!^{(m-1)/(p-2)}$ if $p>3$. Our theorem can now be stated as follows. 
\begin{thma}\label{thm:JordanAnalogue}
Let $G$ be a finite group, let $p$ be prime, and suppose that $n$ is the dimension of a faithful representation of $G$ over a field $\mathbb{F}_q$ of size $q=p^f$. Define $\alpha(q):=96$ if $q:=5$, $\alpha(q):=144$ if $q=7$, and $\alpha(q):=600$ if $q=11$. Then the proportion of elements of $G$ of $p$-power order is at least $1/f(n,q)$, where
\[f(n,q):= \begin{cases}
      [3(q^3-1)]^{n/3}h_p(n/3) & \text{if $p=2$;} \\
      640^{n/4}h_p(n/4) & \text{if $q=3$;} \\
      [2(q^2-1)]^{n/2}h_p(n/2) & \text{if $q\in\{9,27\}$;}\\
      \alpha(q)^{n/2}h_p(n/2) & \text{if $q\in\{5,7,11\}$;} \\
      (q-1)^{n}h_p(n) & \text{otherwise.}
   \end{cases}
\]
In particular, $|G:O^{p'}(G)|\le f(n,q)$.
\end{thma}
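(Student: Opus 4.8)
The final sentence is immediate from the main assertion: every element of $p$-power order maps trivially to the $p'$-group $G/O^{p'}(G)$, hence lies in $O^{p'}(G)$, so if $G_{(p)}$ (the set of elements of $p$-power order) satisfies $|G_{(p)}|/|G|\ge 1/f(n,q)$ then $|G:O^{p'}(G)|=|G|/|O^{p'}(G)|\le |G|/|G_{(p)}|\le f(n,q)$. So the real task is to bound $P_p(G):=|G_{(p)}|/|G|$ from below, and I would do this by induction on $n$ (breaking ties by $|G|$), using the classification of finite simple groups. The engine is two elementary counting lemmas which I would establish first (the ``general method'' of the abstract): (i) if $N\trianglelefteq G$ then $P_p(G)\ge P_p(N)\,P_p(G/N)$ — for each $p$-element $\bar x\in G/N$ one picks the $p$-part of a preimage (whose image is still $\bar x$) and shows, by analysing the twisted norm $n\mapsto\prod_i{}^{x^i}n$ on $N$, that the coset $xN$ contains at least $|N_{(p)}|$ elements of $p$-power order; (ii) consequently $P_p(G)\ge P_p(G/Q)$ whenever $Q\trianglelefteq G$ is a $p$-subgroup, since $P_p(Q)=1$. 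I would also record that $f$ is \emph{submultiplicative}, $f(n_1,q)f(n_2,q)\le f(n_1{+}n_2,q)$ and $f(a,q)^t\le f(a^t,q)$ for $n_i,a\ge 1$ (immediate from the shape $c^{m}h_p(m)$ with $h_p(m)=d^{(m-1)/e}$), together with a wreath-type refinement of (i) bounding $P_p$ of a subgroup of $X\wr S_k$ in terms of $P_p(X)$ and $k$.

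\emph{Reductions to the primitive case.} If $G$ acts reducibly on $V=\mathbb{F}_q^n$, fix a proper nonzero invariant subspace $W$; the kernel $N$ of $G\to\GL(W)$ contains a normal $p$-subgroup modulo which it embeds faithfully in $\GL(V/W)$, so by (i), (ii) and induction $P_p(G)\ge P_p(G/N)P_p(N)\ge 1/(f(n_1,q)f(n_2,q))\ge 1/f(n,q)$ with $n_1=\dim W$, $n_2=n-n_1$. Hence we may assume $G$ acts irreducibly, so $O_p(G)=1$. If the action is imprimitive, with block system $V=V_1\oplus\dots\oplus V_k$ ($k\ge 2$, all $\dim V_i=n/k$), then $G$ embeds in $\GL_{n/k}(q)\wr S_k$; the wreath-type lemma, submultiplicativity, and the fact that the growth rate of $h_p(m)$ is large enough to dominate $|S_k|$ as blocks are amalgamated reduce this to the block stabiliser. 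The same works for tensor-induced structures. Thus we may assume $G$ is primitive.

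\emph{The primitive case.} Now $F^*(G)=Z(G)E(G)$ unless $G$ is of \emph{symplectic type}, i.e.\ $F^*(G)=Z(G)R$ with $R$ an $r$-group of symplectic type, $r\ne p$ and $n=r^m$; the latter sits in $r^{1+2m}.\Sp_{2m}(r)$ and is dispatched from the short list of such groups by direct estimation. Otherwise primitivity forces the components of $E(G)$ to be transitively permuted, and two or more components again yield a tensor-induced structure treated as above; so $E(G)$ is quasisimple, $G/Z(G)$ is almost simple with socle $E(G)/Z(E(G))$ acting irreducibly, and by (i), (ii) applied to $E(G)Z(G)\trianglelefteq G$ it suffices to bound $P_p(E(G))$ (the tiny outer contribution is absorbed by the $h_p$ factors). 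If $E(G)$ is of Lie type in characteristic $p$, Steinberg's theorem \cite{S} gives precisely the needed estimate: the proportion of unipotent elements is $q^{-N}$ with $N$ linear in the Lie rank, which against the lower bound for $n$ in terms of the rank yields $P_p(E(G))\ge 1/f(n,q)$ — and it is here that the small-$q$ constants $3(q^3-1)$, $640$, $2(q^2-1)$, $\alpha(q)$ are forced (by $\SL_2$, $\SU_3$, Suzuki and Ree groups, $\psl_2(8)$, etc., where the generic $(q-1)^n h_p(n)$ would be too small). If instead $E(G)$ is alternating, sporadic, or of Lie type in characteristic $\ne p$, then the number of $p$-elements is divisible by $|E(G)|_p$ (Frobenius), so $P_p(E(G))\ge 1/|E(G)|_{p'}$, and one checks case by case, using the known lower bounds on the minimal faithful $\mathbb{F}_q$-dimension (Landazuri--Seitz and the like), that $|E(G)|_{p'}\le (q-1)^n h_p(n)\le f(n,q)$.

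\emph{Main obstacle.} The delicate point is \emph{tightness}: the theorem claims best-possible functions, so the cross-characteristic quasisimple estimates and the wreath/tensor amalgamation must be calibrated to the exact extremal configurations — e.g.\ $(\mathbb{F}_q^*)^{p-1}\rtimes S_{p-1}\le\GL_{p-1}(q)$, which attains the bound with equality when $p>3$ — rather than merely giving the right order of magnitude. Fixing the exponential rate in $h_p(m)$ so that it simultaneously beats $|S_k|$ in the imprimitive step and is still realised by an honest subgroup is the crux, and the low-rank Lie-type groups over small $q$ (together with the symplectic-type cases) then pin down the finite list of exceptional constants.
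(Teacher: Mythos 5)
The central gap is your engine Lemma (i). You assert $P_p(G)\ge P_p(N)\,P_p(G/N)$ for every normal subgroup $N$, which amounts to claiming that every coset $Ng$ whose image in $G/N$ is a $p$-element contains at least $|\Ord(N,p)|$ elements of $p$-power order, and you propose to get this from the twisted norm $n\mapsto\prod_i{}^{x^i}n$. That computation only relates $p$-elements of $Ng$ to conjugacy in the coset $N\alpha\subseteq\Aut(N)$, where $\alpha$ is the (possibly outer) automorphism induced by $g$; it does not produce an injection from $\Ord(N,p)$ into $\Ord(Ng,p)$, and the paper never claims, let alone proves, the untwisted inequality. What the paper actually proves (Lemmas \ref{lem:CrucialRed} and \ref{lem:fpnonab}) is the statement with $|\Ord(N,p)|$ replaced by $f_p(N)$, the minimum over all $p$-element cosets of $\Aut(N)$, and the whole argument is then run through the invariant $M_p(S)$, a bound on centralizers of $p$-elements in the outer cosets $S\alpha\subseteq\Aut(S)$ of simple groups. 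This is not a technicality: your induction repeatedly passes to normal subgroups of subgroups of $R\wr\Sym_k$ and to $E(G)Z(G)$ inside an almost simple group, and there the elements of $G$ twist the factors by outer automorphisms, so what is needed is a lower bound for $p$-elements in cosets of graph and field automorphisms of Lie type groups and in outer cosets of $\Alt_m$ — not merely $P_p$ of the simple groups themselves. Supplying exactly this is the bulk of the paper (Theorem \ref{thm:AltThm} for alternating groups; Theorem \ref{thm:MainTheoremp} via regular unipotent classes in bad characteristic, the Lawther--Liebeck--Seitz results for graph-automorphism cosets, and Shintani descent for field-twisted cosets). Steinberg's theorem plus Frobenius' divisibility, which is all your primitive case invokes, gives no information about these cosets, so your ``wreath-type refinement'' cannot be carried out as sketched.

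A secondary but substantive inaccuracy: the small-$q$ constants $3(q^3-1)$, $640$, $2(q^2-1)$, $\alpha(q)$ are not forced by $\SL_2$, $\SU_3$, Suzuki or Ree groups in defining characteristic — for those, the unipotent proportion is roughly $q^{-\ell}$, far larger than $1/f(n,q)$. They arise from $p'$-group configurations: semilinear (Singer-normalizer) groups $\Gamma\mathrm{L}_1(q^r)$, extraspecial normalizers such as $2^{1+4}.(5\rtimes 4)\le\GL_4(3)$ and $(q-1)\circ 2^{1+2}.\Sp_2(2)$ for $q\in\{5,7\}$, and $2.\Alt_5\le\GL_2(11)$; these are exactly the exceptional cases isolated in the paper's Propositions \ref{prop:basicI}--\ref{prop:basicIIp} and Table \ref{tab:EXMainTheoremPrimLin}, and the calibration you identify as the ``main obstacle'' is carried out there (and in Lemma \ref{lem:mtbound}) with the twisted-coset invariant $M_p$ in place of $P_p$. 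Your cross-characteristic bound $P_p(E(G))\ge 1/|E(G)|_{p'}$ is also too crude to recover these exact constants in the borderline small-dimensional cases.
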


\noindent We remark that constructing elements of prime power order in a given finite group $G$ is an important tool in computational group theory (see for example \cite{Kantor,KSComp,KSLie}). In practice, this is done by choosing elements at random (with an ``almost" uniform distribution) and computing their orders. Theorem \ref{thm:JordanAnalogue} shows that if $G$ is constructed as a linear group and $p$ is the characteristic of the underlying field, then one should find a $p$-element relatively quickly in this way.

Our methods also allow us to prove the following slightly stronger result, when the representation in question is irreducible.
\begin{thma}\label{thm:JordanIrr}
Let $G$ be a finite group, let $p$ be prime, and suppose that $n$ is the dimension of a faithful irreducible representation of $G$ over a field $\mathbb{F}_q$ of size $q=p^f$. Define $\alpha(q):=96$ if $q:=5$, $\alpha(q):=144$ if $q=7$, and $\alpha(q):=600$ if $q=11$. Then the proportion of elements of $G$ of $p$-power order is at least $1/i(n,q)$, where
\[i(n,q):= \begin{cases}
      [3(q^3-1)]^{n/3}h_p(n/3) & \text{if $p=2$;} \\
      640^{n/4}h_p(n/4) & \text{if $q=3$;} \\
      [2(q^2-1)]^{n/2}h_p(n/2) & \text{if $q=3$ and $4\nmid n$;} \\
      [2(q^2-1)]^{n/2}h_p(n/2) & \text{if $q\in\{9,27\}$;}\\
      \alpha(q)^{n/2}h_p(n/2) & \text{if $q\in\{5,7,11\}$;} \\
      (q-1)^{n}h_p(n) & \text{if $q\in\{5,7,11\}$ and $n$ is odd;} \\
      (q-1)^{n}h_p(n) & \text{otherwise.}
   \end{cases}
\]
In particular, $|G:O^{p'}(G)|\le i(n,q)\le f(n,q)$, where $f(n,q)$ is as in Theorem \ref{cor:JordanAnalogue}.
\end{thma}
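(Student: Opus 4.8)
The strategy is to reduce the irreducible case to the general case of Theorem~\ref{thm:JordanAnalogue} via Clifford theory, and to win the extra strength (the ``$q=3$ and $4\nmid n$'' and ``$q\in\{5,7,11\}$ and $n$ odd'' rows) by controlling when the worst-case configurations can actually occur inside an irreducible representation. First I would fix a faithful irreducible $\mathbb{F}_qG$-module $V$ of dimension $n$ and pass to $N:=O^{p'}(G)$, noting that it suffices to bound $|G:N|$, since every element outside a proper subgroup containing all $p$-elements fails to have $p$-power order and $N\trianglelefteq G$ contains all $p$-elements; thus the proportion of $p$-elements is at least $1/|G:N|$. The point is that $f(n,q)$ and $i(n,q)$ are proved as upper bounds for $|G:O^{p'}(G)|$, and the counting statement about the proportion of $p$-elements follows formally (if all $p$-elements lie in $N$ then the proportion is $\geq 1/|G:N|$; if not, $O^{p'}(G)$ is a proper normal subgroup of the subgroup generated by all $p$-elements, contradiction). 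So the entire content is the index bound, and I would establish it by induction on $n$.

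The inductive engine is Clifford theory. If $V$ is not a multiple of a single $G$-conjugacy class of irreducibles of a normal subgroup --- more precisely, if there is a proper subgroup $H<G$ and an irreducible $\mathbb{F}_qH$-module $W$ with $V\cong W^G$ (imprimitive case) --- then $\dim W = n/k$ with $k=|G:H|\geq 2$, and one combines the bound for $H$ acting on $W$ (dimension $n/k<n$) with the $\leq k!$ factor coming from the action on the $k$ blocks; the submultiplicativity built into the shape $(q-1)^n h_p(n)$ and its variants (note $h_p$ is chosen so that $h_p(a)h_p(b)\cdot(\text{const}) \geq h_p(a+b)$ with room to spare, and $(q-1)^{n/k}\cdot k!^{?}$ is absorbed) is what makes $i(n,q)$ come out. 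If instead $V$ is primitive, then by a standard argument $G$ has a quasi-projective structure: $\og := G/Z$ (with $Z$ the scalars) has a normal subgroup that is a central product of quasisimple groups and $p$-groups, i.e.\ one is in the ``generalized Fitting'' situation, and one invokes the classification-based input (this is where the earlier sections of the paper --- bounds for quasisimple groups of Lie type in defining and cross characteristic, for alternating and sporadic groups, and the extraspecial-normalizer cases --- are used) to bound $|G:O^{p'}(G)|$ directly. The arithmetic must be arranged so that the primitive bound is always strictly below the imprimitive bound of the same dimension, so that the worst case is genuinely the fully-imprimitive ``$\SL_2$ wreath symmetric group'' type example that dictates $h_p$.

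The place where the \emph{irreducible} bound strictly improves on the general bound is the small-$q$ analysis: for $q\in\{3,5,7,11\}$ the general bound $f(n,q)$ uses the constants $640,\alpha(q)$ etc.\ because in the reducible case one can take a direct sum of many copies of a small faithful module (e.g.\ of $\SL_2(q)$ or $2.\Alt_c$) in a ``block-diagonal but not permuted'' way, which is forbidden once $V$ is irreducible unless the dimension is divisible by the relevant period ($4$ for the $q=3$ quaternionic case, $2$ for $q\in\{5,7,11\}$). So for those $q$, when $n$ is not divisible by that period I would show the extraspecial/quasisimple contributions cannot assemble into an irreducible module of that dimension, forcing the tamer bound $(q-1)^n h_p(n)$; this is a parity/divisibility bookkeeping argument on the dimensions of the absolutely irreducible constituents over $\overline{\mathbb{F}_q}$ versus the field of definition $\mathbb{F}_q$, using the Frobenius--Schur-type trichotomy (orthogonal/symplectic/unitary) to see when constituents double in size.

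The main obstacle I expect is the primitive case bookkeeping for small $q$: one must push the classification-based bounds for every relevant quasisimple group (particularly $\SL_2(q)$, $\SL_n(q')$ with $q'$ a power of $q$, and the exceptional/sporadic groups with small cross-characteristic representations over $\mathbb{F}_q$ for $q=5,7,11$) through tightly enough that the resulting index bound beats $(q-1)^n h_p(n)$ by a comfortable margin, \emph{and} simultaneously track the field-of-definition doubling so that the ``$n$ odd'' and ``$4\nmid n$'' exceptions are exactly the cases where the improvement kicks in. The submultiplicativity lemma for $h_p$ and the extremal example verifying sharpness are the technical glue; everything else is an induction on $n$ that alternates between the imprimitive reduction and the primitive base cases supplied by the earlier sections.
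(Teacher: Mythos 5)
Your reduction in the first paragraph is where the argument breaks, and it breaks fatally. You assert that the proportion statement ``follows formally'' from the index bound, via ``if all $p$-elements lie in $N=O^{p'}(G)$ then the proportion is $\geq 1/|G:N|$''. The implication goes the wrong way: knowing that every $p$-element lies in $N$ yields only the \emph{upper} bound (proportion of $p$-elements) $\le 1/|G:N|$; a lower bound of $1/|G:N|$ would require $G$ to contain at least $|N|$ elements of $p$-power order, i.e.\ essentially every element of $N$ to be a $p$-element. Already for $G=\SL_n(q)$ in characteristic $p$ one has $O^{p'}(G)=G$, so your reduction would assert that the proportion of $p$-elements is $1$, whereas by Steinberg's theorem it is roughly $1/q^{\,n-1}$ --- which is precisely why $f(n,q)$ and $i(n,q)$ contain factors like $(q-1)^n$. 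In the paper the logic runs in the opposite direction: the proportion bound is the theorem, and $|G:O^{p'}(G)|\le i(n,q)$ is its trivial corollary (all $p$-elements lie in $O^{p'}(G)$, so $1/i(n,q)\le$ proportion $\le 1/|G:O^{p'}(G)|$). By declaring that ``the entire content is the index bound'' you have replaced the theorem by a strictly weaker statement, and nothing in the remainder of your outline (Clifford-theoretic induction, primitive case via $F^*$, CFSG input used ``to bound $|G:O^{p'}(G)|$ directly'') ever returns to counting $p$-elements inside $O^{p'}(G)$, which is where all the difficulty lies.

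The missing ingredient is a counting mechanism compatible with the imprimitive/reducible induction you describe. The paper supplies this with the invariant $M_p(G)$, built from minimal centralizer orders of $p$-elements in the $\Aut(S)$-cosets of the composition factors $S$: Lemmas \ref{lem:CrucialRed} and \ref{lem:fpnonab} and Corollary \ref{cor:MpCor} show the proportion of $p$-elements is at least $1/M_p(G)$ and that $M_p$ is multiplicative over normal subgroups and submultiplicative over subdirect products, which is exactly what makes the wreath-product step $G\le R\wr\Sym_t$ and the reducible induction go through. The composition-factor inputs are Theorems \ref{thm:MainTheoremp}, \ref{thm:MainTheoremr} and \ref{thm:AltThm}, the primitive linear structure of Section \ref{sec:primsec} together with Propositions \ref{prop:basicI}--\ref{prop:basicIIp} and Corollary \ref{cor:MTPL}, the permutation bound $M_p\le h_p$ of Corollary \ref{cor:MainTheoremPermCor}, and the arithmetic Lemma \ref{lem:mtbound}. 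Note also that the extra rows of $i(n,q)$ do not come from a Frobenius--Schur/field-of-definition doubling argument as you propose: in the irreducible case one writes $G\le R\wr\Sym_t$ with $R\le\GL_m(q)$ primitive and $mt=n$, and the exceptional primitive blocks in Table \ref{tab:EXMainTheoremPrimLin} all have $m$ divisible by $4$ (for $q=3$) or by $2$ (for $q\in\{5,7,11\}$), so when $4\nmid n$ (resp.\ $n$ odd) they simply cannot occur and Lemma \ref{lem:mtbound} yields the sharper bound. But none of this is reachable from your setup, because your reduction discards the proportion statement at the outset.
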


\noindent We remark that it is not immediately obvious that the function $i(n,q)$ defined in Theorem \ref{thm:JordanIrr} is bounded above by the function $f(n,q)$ from Theorem \ref{thm:JordanAnalogue}. This is however, easy to see, and will be proved in Lemma \ref{lem:mtbound}.

The bounds in Theorems \ref{thm:JordanAnalogue} and \ref{thm:JordanIrr} are best possible. To see this, define the integer $m$ and the permutation group $S$ as follows: if $p=2$, then set $m=3^{k}$ and let $S$ be a Sylow $3$-subgroup of $\Sym_{m}$. If $p=3$, then set $m=5^{k}$, and let $S:=(C_5\wr C_4)\wr\hdots\wr(C_5\wr C_4)\le \Sym_m$ be the iterated wreath product of $k$ copies of $C_5\rtimes C_4\le \Sym_5$. If $p\geq 5$, then set $m:=(p-1)^k$, and let $S:=\Sym_{p-1}\wr\hdots\wr\Sym_{p-1}\le \Sym_m$ be the iterated wreath product of $k$ copies of $\Sym_{p-1}$.
Also, define the integer $r$ and the irreducible linear group $R\le \GL_r(q)$ as follows: if $q=2$, then set $r:=3$ and define $R$ to be the semilinear group $R:=\Gamma \mathrm{L}_1(q^r)\cong (q^r-1)\rtimes r\le \GL_r(q)$. If $q=3$ then set $r:=4$ and $R:=2^{1+4}.(C_5\rtimes C_4)\le \GL_4(q)$. If $q\in\{9,27\}$, then set $r:=2$ and $R:=\Gamma \mathrm{L}_1(q^r)$. If $q\in\{5,7\}$, then set $r:=4$ and $R:=(q-1)\circ 2^{1+2}.\Sp_2(2)\le \GL_4(q)$. If $q=11$, then set $r:=2$ and $R:=2.\Alt_5\le \GL_2(11)$. In all other cases, define $r:=1$ and $R:=\GL_1(q)$.

Finally, set $n:=rm$ and $G:=R\wr S\le \GL_n(q)$. In each case, $G$ is an irreducible $p'$-group of order $i(n,q)\le f(n,q)$. It also follows from these examples that the bounds in Theorems \ref{thm:JordanAnalogue} and \ref{thm:JordanIrr} are best possible even if one replaces ``$p$-elements" by ``$p$-singular elements" in the statements of the theorems. (An element $g$ of a finite group $G$ is said to be \emph{$p$-singular} if $p$ divides the order of $g$.)
\footnote{for bounds in Theorems \ref{thm:JordanAnalogue} and \ref{thm:JordanIrr} that are exponential in $q$, note that $(p-1)!^{1/(p-2)}\le p-1$.}

The bounds in Theorems \ref{thm:JordanAnalogue} and \ref{thm:JordanIrr} essentially follow from two other results, one concerning unipotent elements in primitive linear groups, and the other concerning $p$-elements in permutation groups. 

We will now state these results, beginning with the former. Recall that an irreducible linear group $X\le \mathrm{GL}(V)$ is said to be \emph{imprimitive} if the natural $X$-module $V$ has a direct sum decomposition $V=V_1\oplus\hdots\oplus V_t$ with the property that for each $1\le i\le t$, and each $x\in X$, either $V_i^x=V_i$ or $V_i^x\cap V_i=0$. If no such decomposition exists, then the irreducible group $X\le \mathrm{GL}(V)$ is said to be \emph{primitive}. Our lower bound on the proportion of unipotent elements in a primitive linear group can now be given as follows. 
\begin{thma}\label{thm:MainTheoremPrimLin}
Let $G$ be a finite group, let $p$ be prime, and suppose that $n$ is the dimension of a faithful primitive representation of $G$ over a field $\mathbb{F}_q$ of size $q=p^f$. 
\begin{enumerate}[\upshape(i)]
    \item If $(G,n,q)$ lies in Table \ref{tab:EXMainTheoremPrimLin}, then the proportion of elements of $G$ of $p$-power order is precisely $1/P(G,n,q)$, where $P(G,n,q)$ is as given in the fourth column of the table.
    \item If $(G,n,q)$ does not lie in Table \ref{tab:EXMainTheoremPrimLin}, then the proportion of elements of $G$ of $p$-power order is at least $1/[n_{p'}(q^n-1)]$.
\end{enumerate}
\end{thma}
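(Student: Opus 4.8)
The plan is to combine the classification of primitive linear groups (Aschbacher's theorem for $\GL_n(q)$) with an induction on the degree $n$, glued together by the following elementary observation: if $N\trianglelefteq G$, then the proportion $P_p(G)$ of elements of $p$-power order in $G$ satisfies $P_p(G)\geq P_p(N)\,P_p(G/N)$. This follows from a Frobenius-style coset count: the map $g\mapsto gN$ carries the $p$-elements of $G$ onto the $p$-elements of $G/N$, and on average a fibre contains at least $|N_p|$ $p$-elements, where $N_p$ is the set of $p$-elements of $N$. In particular, whenever $G/N$ is ``small'' one may simply use $P_p(G/N)\geq 1/|G/N|$. I would also record at the outset that, since the representation is faithful and (being primitive) irreducible in characteristic $p$, the fixed space of $O_p(G)$ is a nonzero $G$-submodule of $V$, whence $O_p(G)=1$; this is used repeatedly.

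Next I would run the Aschbacher reduction on $G\leq\GL_n(q)$. As $G$ is irreducible and primitive, the reducible class $\mathcal{C}_1$ and the imprimitive class $\mathcal{C}_2$ do not occur, and the remaining geometric classes are cleared by the inductive hypothesis. For the field-extension class $\mathcal{C}_3$, with $G\leq\gaml_{n/b}(q^b)$, the group $\gaml_1(q^n)\cong(q^n-1)\rtimes n$ is precisely the extremal configuration forcing the bound $1/[n_{p'}(q^n-1)]$, and when $n/b\geq 2$ the unipotent elements of $\GL_{n/b}(q^b)$ provide ample room. For the tensor and tensor-induced classes $\mathcal{C}_4$, $\mathcal{C}_7$, where $G$ normalises $\GL_a(q)\circ\GL_b(q)$ with $n=ab$ (resp. $\GL_a(q)\wr\Sym_t$ with $n=a^t$), the counting lemma together with induction on the factors, the identities $(ab)_{p'}=a_{p'}b_{p'}$ and $(q^a-1)(q^b-1)\leq q^{ab}-1$, and the permutation-group input underlying Theorems~\ref{thm:JordanAnalogue} and~\ref{thm:JordanIrr}, yields the bound. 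Finally, the subfield and form classes $\mathcal{C}_5$, $\mathcal{C}_8$ place $G$ between a quasisimple classical group of Lie type in characteristic $p$ and its normaliser, which is subsumed by the almost-quasisimple analysis below. In each geometric class the small-$n$ base cases of the induction have to be checked by hand, and these account for some of the table exceptions.

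It then remains to treat $G$ whose generalised Fitting subgroup is, modulo scalars, either elementary abelian (class $\mathcal{C}_6$) or quasisimple (class $\mathcal{S}$). If $F^*(G)/Z(G)$ is elementary abelian then $G\leq(\text{scalars})\circ r^{1+2m}.\Sp_{2m}(r)$ with $n=r^m$; since $O_p(G)=1$ forces $r\neq p$, the extraspecial normal subgroup is a $p'$-group, so by the counting lemma it suffices to bound $P_p$ of a section of $\Sp_{2m}(r).(\text{outer part})$ from below and compare with the enormous quantity $n_{p'}(q^{r^m}-1)$; only the very smallest such configurations, which occur among the extremal examples listed after Theorem~\ref{thm:JordanIrr}, enter the table. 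The heart of the proof is the class $\mathcal{S}$ case: $F^*(G)=E$ is quasisimple with $E/Z(E)=T$ simple and $|G:E|$ bounded in terms of $|\Out(T)|$, so $P_p(G)\geq P_p(E)/|G:E|$ and it suffices to bound $P_p(E)$. If $T$ is of Lie type in characteristic $p$, Steinberg's theorem gives that the number of unipotent elements of $T$ equals $|T|_p^2$, whence $P_p(E)\geq P_p(T)\geq |T|_p/|T|_{p'}\geq 1/|T|_{p'}$, and one compares $|T|_{p'}$ with $n_{p'}(q^n-1)$ using the Landazuri--Seitz--Zalesskii (and L\"ubeck) lower bounds on the degree $n$ of a faithful $\mathbb{F}_q T$-module, type by type. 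If $T=\Alt_a$, the minimal faithful degree is at least $a-2$ while $P_p(\Sym_a)$ is bounded below by a known function of $a$, which settles all but small $a$. If $T$ is sporadic or of Lie type in characteristic different from $p$, then for each fixed $T$ there are only finitely many relevant $(q,n)$, and one reads $P_p(E)$ off known conjugacy-class data and compares directly; the configurations where this comparison fails are exactly the entries of Table~\ref{tab:EXMainTheoremPrimLin}, for which part~(i) records the precise value $1/P(G,n,q)$.

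The main obstacle is the class $\mathcal{S}$ analysis, and within it the non-defining-characteristic subcase together with the small-degree base cases scattered through the induction: to pin down the exceptional list exactly one needs lower bounds on the proportion of $p$-elements in (quasi)simple groups and lower bounds on their minimal faithful modular representation degrees that are simultaneously sharp, and the behaviour as $q$ varies with $T$ fixed must be controlled uniformly. By comparison, the geometric-class reductions, the $\mathcal{C}_6$ case, and the defining-characteristic subcase of $\mathcal{S}$ are routine once Steinberg's theorem, the Landazuri--Seitz bounds, and the known $p$-element proportion estimates are in hand.
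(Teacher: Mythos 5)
Your proposal stands or falls with the ``elementary observation'' that $P_p(G)\geq P_p(N)\,P_p(G/N)$ for every normal subgroup $N$, justified by the claim that on average a fibre over a $p$-element of $G/N$ contains at least $|\Ord(N,p)|$ elements of $p$-power order. That inequality is false, and it fails in exactly the configuration your reduction leans on most heavily (a simple or quasisimple layer with outer automorphisms of order divisible by $p$). Concretely, take $G=\pgl_3(4)$, $N=\psl_3(4)$, $p=3$. Then $N$ contains $2241$ elements of $3$-power order (the identity and the class of size $2240$ with centralizer of order $9$), so your bound would give $|\Ord(G,3)|\geq 3\cdot 2241=6723$. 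But each of the two outer cosets of $N$ in $G$ contains exactly $1296$ elements of order $3$ and none of order $9$: lifting to $\GL_3(4)$, the relevant elements are the three semisimple classes with eigenvalue multisets of type $(1,1,\omega)$, $(\omega,\omega,\omega^2)$, $(\omega^2,\omega^2,1)$ (size $336$ each) and the Singer-type class with $x^3=\omega I$ (size $2880$), and $(1008+2880)/3=1296$ after factoring out the scalars. Hence $|\Ord(G,3)|=2241+2\cdot 1296=4833<6723$; the average fibre over a $3$-element of $G/N$ has size $1611<2241$. So the multiplicative inequality cannot serve as the glue for your induction. (Where you only need ``every coset above a $p$-element of $G/N$ contains at least one $p$-element'' --- e.g.\ when $N$ is a $p'$-group, as in your extraspecial case --- the step is fine; but in the tensor, tensor-induced, field-extension and class $\mathcal{S}$ steps you genuinely use the multiplicative form with $N$ having nonabelian composition factors and $G/N$ of order divisible by $p$, and there it is false.)

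This is precisely the trap the paper is engineered to avoid. Its Lemma \ref{lem:CrucialRed} replaces $|\Ord(N,p)|$ by $f_p(N)$, the \emph{minimum} over $p$-element cosets $N\alpha\subseteq\Aut(N)$ of the number of $p$-elements in the coset, and Lemma \ref{lem:fpnonab} bounds this from below by minimal $S$-class sizes of $p$-elements inside cosets $S\alpha$ of $\Aut(S)$; this is why Theorems \ref{thm:MainTheoremp}, \ref{thm:AltThm} and \ref{thm:MainTheoremr} are statements about every coset $S\alpha$ (centralizer bounds for $p$-elements in the coset), not about $S$ alone. Your class $\mathcal{S}$ analysis invokes only Steinberg's count of unipotent elements of $T$ itself and then divides by $|G:E|$; that gives no control when $G$ induces field or graph automorphisms of $p$-power order, nor does it support the permutation-quotient gluing in the tensor-induced and imprimitivity steps, where you invoke $P_p(G/N)\geq 1/h_p(t)$ multiplicatively. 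To repair the argument you would have to replace the proportion-based recursion by a coset-based invariant of the paper's type (its $M_p$) and supply the corresponding coset-level inputs; the Aschbacher-style decomposition itself (versus the paper's weak-quasiprimitivity structure from Lemmas \ref{lem:qlin} and \ref{lem:MainPrimLin}) would then be a legitimate alternative, and your treatment of part (i) is fine in spirit, since the groups in Table \ref{tab:EXMainTheoremPrimLin} are $p'$-groups, so the exact proportion is just $1/|G|$.
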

\noindent Here, and throughout the paper, $n_{p'}$ is defined to be the the $p'$-part of $n$. 

\begin{table}[]
    \centering
    \begin{tabular}{c|c|c|c}
     $G$    & $n$ & $q$ & $P(G,n,q)$\\
     \hline
     $6\circ 3^{1+2}.\Sp_2(3)$ & $3$    & $7$ & $1296$\\
     $(q-1)\circ 2^{1+2}.\Sp_2(2)$ & $2$    & $5,7$ & $96,144$\\
     $2^{1+4}.\Sp_4(2)$ & $4$    & $7$ & $23040$\\
     $(q-1)\circ 2^{1+4}.\Sp_4(2)$ & $4$    & $7,11,13$ & $69120,115200,138240$\\
     $2^{1+4}.(5\rtimes 4)$ & $4$    & $3$ & $640$\\
     $4\circ 2^4.(\Sp_2(2)\wr \Sym_2)$ & $4$    & $5$ & $4608$\\ 
     $2^{1+4}.\Sp_4(2)'$ & $4$    & $7$ & $11520$\\ 
     $6\circ 2^{1+4}.\Sp_4(2)'$ & $4$    & $7$ & $34560$\\ 
     $6\circ 2^{1+4}.\Sym_5$ & $4$    & $7$ & $11520$\\
     $(q-1)\circ 2.\Alt_5$ & $2$    & $11,19$ & $600,1140$\\
     $10\circ (2.\Alt_5\circ 2.\Alt_5).2$ & $4$    & $11$ & $72000$\\
     \hline
    \end{tabular}
    \caption{The triples ($G,n,q)$ from Theorem \ref{thm:MainTheoremPrimLin}(i).}
    \label{tab:EXMainTheoremPrimLin}
\end{table}

The lower bound in Theorem \ref{thm:MainTheoremPrimLin}(ii) is best possible, as one can see by considering a Hall $p'$-subgroup of the semilinear group $\Gamma\mathrm{L}_1(q^n)\cong (q^n-1)\rtimes n\le \GL_n(q)$. We remark that the bound in Theorem \ref{thm:MainTheoremPrimLin} is also best possible (up to multiplication by a ``small" constant) when one considers primitive linear groups of order divisible by $p$. For example, in the theorem of Steinberg \cite{S} mentioned above, it is proved that the number of elements of $p$-power order in $\mathrm{\GL}_n(q)$ is precisely $q^{n(n-1)}$. It follows that the proportion of elements of $p$-power order in $\mathrm{\GL}_n(q)$ is  
\begin{align*}
    \dfrac{q^{n(n-1)}}{|\GL_n(q)|}&=\dfrac{q^{n(n-1)/2}}{\prod_{i=1}^n(q^i-1)}=\prod_{i=1}^n\left(\dfrac{q^i}{q^i-1}\right) \dfrac{q^{n(n-1)/2}}{\prod_{i=1}^nq^i}=\prod_{i=1}^n\left(\dfrac{q^i}{q^i-1}\right)\dfrac{1}{q^n}.
\end{align*}
Since $c:=\prod_{i=1}^n\left(\dfrac{q^i}{q^i-1}\right)$ is at least $1$, and $1/c=\prod_{i=1}^n\left(1-\dfrac{1}{q^i}\right)\geq 1-1/q\geq 1/2$, we see that the proportion of elements of $\GL_n(q)$ of $p$-power order lies between $1/q^n$ and $2/q^n$. (We refer the reader to \cite[Lemma 3.1]{LMP} for tighter upper and lower bounds on the constant $c$).

Before proceeding to our results on permutation groups, we state a useful consequence of Theorems \ref{thm:JordanAnalogue}, \ref{thm:JordanIrr}, and \ref{thm:MainTheoremPrimLin}. For a prime $p$, let $e=\mathrm{exp}_{p'}(G):=\mathrm{lcm}\{|x|\text{ : }x\in G\text{, }(|x|,p)=1\}$. Since all modular representations of a finite group $G$ in characteristic $p$ can be realised over the field $\mathbb{F}_{p^e}$, we have: 
\begin{cora}\label{cor:JordanAnalogue}
Let $\mathbb{F}$ be a field of characteristic $p>0$, let $G$ be a finite group, and suppose that $n$ is the dimension of a faithful representation $\rho$ of $G$ over $\mathbb{F}$. Set $e:=\mathrm{exp}_{p'}(G)$. Then:
\begin{enumerate}[\upshape(i)]
    \item The proportion of elements of $G$ of $p$-power order is at least $1/f(n,p^e)$, where $f(n,p^e)$ is as defined in Theorem \ref{thm:JordanAnalogue}. In particular, $|G:O^{p'}(G)|\le f(n,p^{e})$.
    \item If $\rho$ is irreducible, then the proportion of elements of $G$ of $p$-power order is at least $1/i(n,p^e)$, where $i(n,p^e)$ is as defined in Theorem \ref{thm:JordanAnalogue}. In particular, $|G:O^{p'}(G)|\le i(n,p^{e})$.
    \item If $\rho$ is primitive and $\rho(G)$ is not one of the groups in Table \ref{tab:EXMainTheoremPrimLin}, then the proportion of elements of $G$ of $p$-power order is at least $1/[n_{p'}(p^{en}-1)]$. In particular, $|G:O^{p'}(G)|\le n_{p'}(p^{en}-1)$.
\end{enumerate}
\end{cora}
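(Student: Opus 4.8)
The plan is to deduce parts (i), (ii) and (iii) from Theorems~\ref{thm:JordanAnalogue}, \ref{thm:JordanIrr} and \ref{thm:MainTheoremPrimLin} respectively, by replacing $\rho$ with a faithful representation, over a \emph{finite} field of size at most $p^{e}$, of a convenient quotient of $G$. For the quotient, let $V$ be the $\mathbb{F}G$-module afforded by $\rho$ and let $K$ be the kernel of the $G$-action on the semisimplification of $V$. A $p$-regular element acting trivially on every composition factor of $V$ acts unipotently on $V$; being also semisimple (its order is prime to $p$) it acts trivially on $V$. So every non-identity element of $K$ has $p$-power order, $K$ is a normal $p$-subgroup, and --- since $K$ is a $p$-group --- the proportion of elements of $p$-power order is the same in $G$ as in $G/K$ (a $p$-element of $G$ maps to a $p$-element of $G/K$, every preimage of a $p$-element of $G/K$ is a $p$-element of $G$, and the fibres of this surjection have size $|K|$). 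Moreover $\mathrm{exp}_{p'}(G/K)$ divides $e$. When $\rho$ is irreducible --- in particular when it is primitive --- the module $V$ is semisimple and $K=1$. So it suffices to produce a faithful $\mathbb{F}_{p^{d}}(G/K)$-module of dimension at most $n$, for some $d$ with $p^{d}\le p^{e}$, which is irreducible (for (ii)) or primitive (for (iii)) when $\rho$ is, and then to invoke the relevant theorem.

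To construct it I would proceed in three steps. First, reduce to a finite field: after replacing $\mathbb{F}$ by the subfield generated by the matrix entries of $\rho$, choosing a finitely generated $\mathbb{F}_p$-subalgebra $R$ with that subfield as fraction field and with $\rho(G)\subseteq\GL_n(R)$, and reducing modulo a maximal ideal of $R$ chosen (by the Nullstellensatz) not to contain the finitely many nonzero entries of the matrices $\rho(g)-1$ $(g\neq 1)$, one obtains a faithful $n$-dimensional representation of $G$ over a finite subfield of $\overline{\mathbb{F}_p}$. (For (ii) and (iii) this reduction need not preserve irreducibility, so there one instead passes from a finitely generated subfield of $\mathbb{F}$ directly to its algebraic closure, over which the module is a sum of Galois conjugates of a single absolutely irreducible module $M$; all Galois twists of $M$ have the same kernel, so $M$ is faithful, and $M$ is determined by its Brauer character, whose values are algebraic over $\mathbb{F}_p$, so $M$ is defined over $\overline{\mathbb{F}_p}$.) Next, over $\overline{\mathbb{F}_p}$, pass to the semisimplification of the module and to the quotient $G/K$, obtaining a faithful \emph{semisimple} $\overline{\mathbb{F}_p}(G/K)$-module of dimension at most $n$ with absolutely irreducible composition factors. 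Finally, descend to a splitting field: by the characteristic-$p$ analogue of Brauer's splitting-field theorem, any field of characteristic $p$ containing a primitive $e'$-th root of unity, where $e':=\mathrm{exp}_{p'}(G/K)\le e$, is a splitting field for $G/K$; the smallest such is $\mathbb{F}_{p^{d}}$ with $d=\mathrm{ord}_{e'}(p)\le e'$, and since Schur indices over finite fields are trivial (Wedderburn) each composition factor --- hence the whole module --- is realizable over $\mathbb{F}_{p^{d}}$. For (ii) one uses instead the absolutely irreducible module $M$ (for which $K=1$), realized over the splitting field, and likewise for (iii).

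With this module in hand, Theorem~\ref{thm:JordanAnalogue} (resp.\ Theorem~\ref{thm:JordanIrr}), applied over $\mathbb{F}_{p^{d}}$, shows that the proportion of elements of $p$-power order in $G/K$, and so in $G$, is at least $1/f(n,p^{d})\ge 1/f(n,p^{e})$ (resp.\ at least $1/i(n,p^{d})\ge 1/i(n,p^{e})$), the last inequalities holding because $f(n,\cdot)$ and $i(n,\cdot)$ are non-decreasing and $p^{d}\le p^{e}$ --- a routine check from their piecewise definitions, on the same lines as the comparison $i(n,p^{e})\le f(n,p^{e})$ of Lemma~\ref{lem:mtbound}. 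Part (iii) is identical, via Theorem~\ref{thm:MainTheoremPrimLin}(ii) applied to a primitive absolutely irreducible constituent, since whether $\rho(G)$ appears in Table~\ref{tab:EXMainTheoremPrimLin} depends only on the isomorphism type of $G$ and on $n$, and so is unaffected by the change of field. The step I expect to be the genuine obstacle is precisely the claim underlying (iii): that primitivity of $\rho$ forces the absolutely irreducible constituent $M$ to be primitive. The natural route is Galois descent --- the Galois group permutes the imprimitivity systems of the module over $\overline{\mathbb{F}_p}$ and commutes with $G$, so a non-trivial imprimitivity system of $M$ would, on summing its blocks over Galois orbits, descend to a non-trivial imprimitivity system of the original $\mathbb{F}$-module, contradicting primitivity --- but this forces one to exclude separately the degenerate case in which the Galois group is already transitive on the blocks of $M$, which requires comparing the Galois stabilizer of $M$ with that of the (hypothetical) inducing module.
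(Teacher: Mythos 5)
Your overall strategy -- replace $\rho$ by a faithful representation over a finite field of size at most $p^{e}$ and quote Theorems \ref{thm:JordanAnalogue}, \ref{thm:JordanIrr} and \ref{thm:MainTheoremPrimLin} -- is exactly the paper's (its proof is the single observation that modular representations are realisable over $\mathbb{F}_{p^e}$), and your treatment of (i) via the semisimplification, whose kernel is a normal $p$-subgroup affecting neither the proportion of $p$-elements nor $\mathrm{exp}_{p'}$, is correct and indeed more careful than that one-liner. The genuine gap is the one you flag yourself in part (iii), and it is not merely an unfinished case analysis: the statement you are trying to prove -- that primitivity of $\rho$ over $\mathbb{F}$ forces the absolutely irreducible constituent $M$, realised over the splitting field $\mathbb{F}_{p^{d}}$, to be primitive -- is false. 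Take $G=C_7\rtimes C_3$ of order $21$, $p=2$, $\mathbb{F}=\mathbb{F}_2(t)$, and let $V$ be the $3$-dimensional module obtained from the embedding $G\le\GL_3(2)$ by extension of scalars. Then $V$ is faithful, irreducible and primitive over $\mathbb{F}$: an imprimitivity system would consist of three $1$-spaces whose stabiliser contains $C_7$, forcing $C_7$ to act by seventh roots of unity, of which $\mathbb{F}_2(t)$ has none but $1$. Yet $M=V\otimes\overline{\mathbb{F}_2}$ is induced from a nontrivial character of $C_7$, hence monomial and imprimitive over any field containing $\mathbb{F}_8$ -- in particular over the splitting field $\mathbb{F}_{2^6}$ your construction would use. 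So no Galois-descent argument on the blocks of $M$ can close the gap; the ``degenerate case'' you mention genuinely occurs.

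The repair is to descend the module itself rather than pass to $M$. Since $V$ is irreducible and Schur indices of finite groups vanish in characteristic $p$, one has $V\cong V_1\otimes_{\mathbb{F}_1}\mathbb{F}$ for an irreducible $\mathbb{F}_1[G]$-module $V_1$, where $\mathbb{F}_1=\mathbb{F}_{p^{d'}}$ is a finite subfield of $\mathbb{F}$ and $d'$ divides the degree $d$ of the Brauer character field of $M$, so $d'\le\mathrm{ord}_{e}(p)\le e$ (concretely, take the $\mathbb{F}_{p^{d}}$-form of $M$, view it over $\mathbb{F}_{p^{d'}}$ with $d'=\gcd(d,a)$ where $\mathbb{F}_{p^{a}}=\mathbb{F}\cap\overline{\mathbb{F}_p}$, and compare Brauer characters via Noether--Deuring). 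This keeps the dimension equal to $n$ and keeps faithfulness and irreducibility, and it transfers primitivity in the easy direction: an imprimitivity decomposition of $V_1$ tensors up to one of $V$, so primitivity of $\rho$ gives primitivity of $V_1$. Applying Theorem \ref{thm:JordanIrr}, respectively Theorem \ref{thm:MainTheoremPrimLin}(ii), to $V_1$ over $\mathbb{F}_{p^{d'}}$ and using monotonicity of $i(n,\cdot)$ and of $n_{p'}(q^{n}-1)$ in $q$ then yields (ii) and (iii); it also removes the dimension bookkeeping you glossed over in (ii), where $\dim M$ may be strictly smaller than $n$.
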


Next, we state our main result for permutation groups.
\begin{thma}\label{thm:MainTheoremPerm}
Let $G\le\Sym_n$ be a permutation group, and let $p$ be prime. Then the proportion of elements of $G$ of $p$-power order is at least $1/h_p(n)$, where $h_p(n)$ is as defined on page 2.
\end{thma}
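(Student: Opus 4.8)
The plan is to argue by strong induction on $n$, organised around a single splitting lemma for the proportion $P_p(H)$ of elements of $p$-power order in a finite group $H$: namely, that $P_p(G)\ge P_p(N)\,P_p(G/N)$ for every $N\trianglelefteq G$. Granting this, the reduction theory of permutation groups does most of the work, leaving only the primitive case.

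I would first record the arithmetic of $h_p$. Writing $h_p(m)=c_p^{(m-1)/e_p}$, where $(c_p,e_p)$ equals $(3,2)$, $(20,4)$, or $((p-1)!,\,p-2)$ according as $p=2$, $p=3$, or $p\ge5$, one has $h_p(a)h_p(b)\le h_p(a+b)$ and $h_p(d)^t\le h_p(dt)$ for all $a,b,t\ge1$, and, more precisely, the identity $h_p(d)^m\,h_p(m)=h_p(dm)$; this last equality is exactly what forces iterated wreath products to be extremal. Next I would prove the splitting lemma. Since neither the coset $gN$ nor the set of its elements of $p$-power order changes when $G$ is replaced by $\langle g\rangle N$, it suffices to treat the case where $G/N$ is cyclic of $p$-power order and $g$ is a $p$-element generating it, and to show that the coset $gN$ then contains at least as many elements of $p$-power order as $N$ does. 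I would establish this by an explicit count inside $G=\langle g\rangle N$, using Sylow theory together with the fact that, because $G/N$ is cyclic, conjugation by $G$ fixes $gN$ setwise. This lemma is, in effect, the ``general method for counting $p$-elements'' of the abstract, and I expect its cyclic case to be the one genuinely delicate elementary point.

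The induction then proceeds smoothly. For $n=1$ there is nothing to prove. If $G\le\Sym_n$ is intransitive with an orbit $\Omega_1$, set $N:=G_{(\Omega_1)}$; then $G/N$ and $N$ act faithfully on $|\Omega_1|<n$ and $n-|\Omega_1|<n$ points respectively, so the inductive hypothesis, the splitting lemma, and $h_p(a)h_p(b)\le h_p(a+b)$ give $P_p(G)\ge h_p(n)^{-1}$. If $G$ is transitive and imprimitive, fix a block system with $m$ blocks of common size $d$, where $1<d<n$, and let $N$ be the kernel of the action of $G$ on the blocks; then $G/N\hookrightarrow\Sym_m$, while $N$ embeds in $\Sym(B_1)\times\cdots\times\Sym(B_m)$. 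A short auxiliary induction on $m$ --- peeling off one block at a time, again via the splitting lemma and the inductive hypothesis --- yields $P_p(N)\ge h_p(d)^{-m}$, and then the identity $h_p(d)^m h_p(m)=h_p(dm)$ gives $P_p(G)\ge h_p(d)^{-m}h_p(m)^{-1}=h_p(n)^{-1}$. This reduces everything to the case where $G$ is primitive.

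The primitive case is the main obstacle. Here I would invoke Maróti's refinement of the classification of primitive groups: apart from finitely many exceptions, settled by direct computation, such a $G$ of degree $n$ is either affine, or preserves a product structure whose factors contain an alternating group $\Alt_m$, or has order at most $n^{1+\log_2 n}$. If $G=V\rtimes H$ is affine with $|V|=r^k=n$ and $H\le\GL_k(r)$ irreducible, then for $r=p$ the subgroup $V$ consists of $p$-elements and $H$ acts faithfully on the $n-1$ nonzero vectors, so the splitting lemma and the inductive hypothesis give $P_p(G)\ge P_p(H)\ge h_p(n-1)^{-1}>h_p(n)^{-1}$; for $r\ne p$ one needs the sharper observation that the number of elements of $p$-power order in $V\rtimes H$ is $\sum_h|V:C_V(h)|$, summed over $p$-elements $h$ of $H$, together with the fact that, $H$ being irreducible, most such $h$ have small fixed space on $V$. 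In the product-structure case one peels off the normal subgroup generated by the alternating factors via the splitting lemma and uses that the proportion of elements of $p$-power order in $\Sym_m$ decays only polynomially in $m$, hence far exceeds $h_p(m)^{-1}$. Finally, if $|G|\le n^{1+\log_2 n}$, then $P_p(G)\ge|G|^{-1}\ge h_p(n)^{-1}$ once $n$ exceeds an explicit bound, and the remaining small degrees are precisely what the constants in $h_p$ are calibrated to survive. I expect the affine groups in cross characteristic, and the uniform handling of the small-degree cases, to be where most of the remaining effort goes.
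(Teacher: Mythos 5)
Your proof rests entirely on the ``splitting lemma'' $P_p(G)\ge P_p(N)\,P_p(G/N)$ for every $N\trianglelefteq G$ (where $P_p$ denotes the proportion of elements of $p$-power order), and this lemma is false. Take $p=2$, $G=\Sym_6$, $N=\Alt_6$. The $2$-elements of $\Alt_6$ are the identity, the $45$ elements of type $2^2$, and the $90$ elements of type $(4,2)$, so $|\Ord(N,2)|=136$ and $P_2(N)=136/360$; the odd permutations of $2$-power order are the $15$ transpositions, the $15$ elements of type $2^3$ and the $90$ $4$-cycles, so the outer coset contains only $120$ such elements. Hence $P_2(G)=256/720<272/720=P_2(N)\,P_2(G/N)$, since $P_2(G/N)=1$. (The same happens for $\Sym_n/\Alt_n$ with $n=7$, etc.) In particular the ``genuinely delicate elementary point'' you isolate -- that a coset $gN$ with $gN$ a $p$-element of $G/N$ contains at least $|\Ord(N,p)|$ elements of $p$-power order -- is exactly what fails here, already with $G/N$ cyclic of order $2$ and $g$ a $2$-element. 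The inequality is fine when $N$ is a $p$-group or a $p'$-group, but the cases your induction actually needs (kernels of orbit and block actions, nonabelian composition factors) are precisely where it breaks, so the intransitive, imprimitive and affine steps all collapse.

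This is why the paper does not work with the proportion directly. Instead it introduces the invariant $M_p(G)$, built from \emph{minimal centralizer orders of $p$-elements in $\Aut(S)$-cosets} of the composition factors $S$: Lemmas \ref{lem:CrucialRed} and \ref{lem:fpnonab} show that in a coset $Ng$ one can only guarantee a single well-chosen $N$-conjugacy class of $p$-elements, whose size is controlled by centralizer bounds, and this yields $P_p(G)\ge 1/M_p(G)$ with $M_p$ multiplicative along normal subgroups and submultiplicative for subdirect products (Corollary \ref{cor:MpCor}). The reduction steps you describe (intransitive, imprimitive via $h_p(r)^s h_p(s)\le h_p(rs)$, Mar\'oti for the primitive case) then do go through, but for $M_p$ rather than for $P_p$, and the real work is the centralizer input: Theorem \ref{thm:AltThm} together with Lemma \ref{lem:basepbound} for $\Alt_n$ and $\Sym_n$ (explicit cycle types read off from the base-$p$ expansion of $n$), and Theorem \ref{thm:MainTheoremp} for Lie type factors. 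Your arithmetic facts about $h_p$ (e.g.\ $h_p(d)^m h_p(m)=h_p(dm)$) are correct and are indeed used, but without replacing the false splitting inequality by the $M_p$-machinery (or some equally robust coset-class argument) the proposed proof cannot be repaired.
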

The bound in Theorem \ref{thm:MainTheoremPerm} is again best possible: one can see this by taking $n:=m$ and $G:=S\le\Sym_n$, where $m$ and $S$ are as defined in the discussion after the statement of Theorem \ref{thm:JordanIrr}.

Write $\mathrm{Lie}(p)$ for the set of finite simple groups of Lie type in characteristic $p$. As a by-product of our methods we obtain upper bounds on the minimal centralizer orders of non-trivial $p$-elements in the $\Aut(S)$-cosets of $S$. In order to state our result, we require some standard notation and terminology: If $S\in\mathrm{Lie}(p)$, then $S=O^{p'}(\ol{X}_{\ol{\sigma}})$, where $\ol{X}$ is a simple algebraic group over the algebraic closure $\ol{\mathbb{F}_p}$ of $\mathbb{F}_p$, $\ol{\sigma}:\ol{X}\rightarrow \ol{X}$ is a Steinberg endomorphism of $\ol{X}$, and $\ol{X}_{\ol{\sigma}}=\{x\in \ol{X}\text{ : }\ol{\sigma}(x)=x\}$. In particular, by \cite[Theorem 2.2.3]{GLS}, we may write $\ol{\sigma}=\ol{\sigma_p}^f\ol{\gamma}^i$, where $\ol{\sigma_p}$ is a Frobenius automorphism of $\ol{X}$, and either 
\begin{enumerate}[(a)]
    \item $\ol{X}\in\{A_{\ell},D_{\ell},E_6\}$ and $\ol{\gamma}$ is a graph automorphism of $\ol{X}$ as in \cite[Theorem 1.15.2(a)]{GLS};
    \item $\ol{X}\in\{B_2\text{ }(p=2),G_2\text{ }(p=3)\text{, }F_4\text{ }(p=2)\}\}$, and $\ol{\gamma}$ is as in \cite[Theorem 1.15.4(b)]{GLS}; or
    \item $\ol{\gamma}=1$.
\end{enumerate}
In this case, we define $p^f$ to be the \emph{level} of $S$.

Our result for centralizer orders of $p$-elements in $\Aut(S)$-cosets of $S$ can now be stated as follows.
\begin{thma}\label{thm:MainTheoremp}
Fix a prime $p$. Let $S$ be a group in $\mathrm{Lie}(p)$ of level $q$ and untwisted Lie rank $\ell$, and write $S=\ol{X}_{\ol{\sigma}}$, for a simple algebraic group $\ol{X}$ and a Steinberg endomorphism $\ol{\sigma}:\ol{X}\rightarrow\ol{X}$, as above. Let $\alpha$ be a $p$-element of $\Aut(S)$. Then there exists an element $x$ of the coset $S\alpha$ such that one of the following holds:
\begin{enumerate}[\upshape(i)]
    \item $|C_{\ol{X}_{\ol{\sigma}}}(x)|\le q^{\ell}$; or 
    \item $S$ and $p$ are as in Table \ref{tab:MainTheorempTable}. In this case, an upper bound for $|C_{\ol{X}_{\ol{\sigma}}}(x)|$ is given in the third column of Table \ref{tab:MainTheorempTable}.
\end{enumerate}
\end{thma}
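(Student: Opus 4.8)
The plan is to work with the structure theory of the algebraic group $\ol{X}$ and the Steinberg endomorphism $\ol{\sigma}$, and to produce the required element $x\in S\alpha$ by a careful choice inside a suitable $\ol{\sigma}$-stable maximal torus or unipotent subgroup. The first reduction is to the case where $\alpha$ is either trivial or a \emph{field} automorphism (including the ``twisted field'' automorphisms arising in case (b) above), since by \cite[Theorem 2.2.3]{GLS} any $p$-element of $\Aut(S)$ modulo $\Inndiag(S)$ is conjugate to a power of a field or graph-field automorphism, and graph automorphisms of order divisible by $p$ only occur in the very small rank cases that will land in Table \ref{tab:MainTheorempTable}; moreover $\Inndiag(S)/S$ has order prime to $p$ except for a few cases with $p=2$, so we may absorb the diagonal part. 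Thus $\alpha$ corresponds to an endomorphism $\ol{\tau}$ of $\ol{X}$ with $\ol{\tau}^{p^a}=\ol{\sigma}$ (up to inner automorphisms) for some $a\ge 0$, and $S\alpha$ meets the fixed-point subgroup $\ol{X}_{\ol{\tau}}$, which is itself a group of Lie type of smaller level.

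For the main case $\alpha=1$, I would use the following well-known principle (going back to Steinberg, and exploited e.g.\ in work of Guralnick–Lübeck and Fulman–Guralnick): if $x\in S$ is a \emph{regular semisimple} element lying in a maximally split or suitably chosen $\ol{\sigma}$-stable maximal torus $\ol{T}$, then $C_{\ol{X}}(x)=\ol{T}$ and hence $|C_{\ol{X}_{\ol{\sigma}}}(x)|=|\ol{T}_{\ol{\sigma}}|$, which is a product of cyclotomic-type factors of total size roughly $q^{\ell}$. More precisely, one chooses $\ol{T}$ so that $|\ol{T}_{\ol{\sigma}}|$ divides $q^{\ell}$ — for instance a maximally split torus in the untwisted case, where $|\ol{T}_{\ol{\sigma}}|=(q-1)^{\ell}\le q^{\ell}$ — and then shows that $\ol{T}_{\ol{\sigma}}$ contains a regular semisimple element of $S$ whenever $q$ is not too small. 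The existence of such an element is a counting argument: the non-regular elements of $\ol{T}_{\ol{\sigma}}$ lie in the union of the subtori $\ker(\beta)$ for $\beta$ ranging over roots, so they number at most $|\Phi|\cdot q^{\ell-1}\cdot(\text{bounded factor})$, and this is less than $|\ol{T}_{\ol{\sigma}}|$ once $q$ exceeds a small bound depending only on the root system. The exceptional small-$q$ cases — which is exactly where $|\Phi|$ is comparable to $q$ — are precisely those collected into Table \ref{tab:MainTheorempTable}; for those finitely many pairs $(S,p)$ one argues by direct inspection of the maximal tori (or centralizer orders of semisimple classes, available in the literature and in computer algebra systems) to read off the bound in the third column.

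For the twisted cases ($\ol{X}\in\{A_\ell,D_\ell,E_6\}$ with a graph-twist, or the Suzuki/Ree situations) the same strategy applies, but one must choose $\ol{T}$ to be $\ol{\sigma}$-stable of the right type: here $\ol{\sigma}$ acts on the cocharacter lattice as $q$ times a finite-order isometry $\rho_0$, and a maximally split torus gives $|\ol{T}_{\ol{\sigma}}|$ dividing $\prod_j(q^{d_j}-\varepsilon_j)$ with $\sum d_j=\ell$, which is again at most $q^{\ell}$; the regular-semisimple counting goes through verbatim with $\Phi$ the (untwisted) root system. In the field-automorphism case $\alpha\ne 1$, one instead locates the element $x$ inside $\ol{X}_{\ol{\tau}}\le S$ and applies the $\alpha=1$ result to the smaller group $\ol{X}_{\ol{\tau}}$: a regular semisimple element of $\ol{X}_{\ol{\tau}}$ is still regular in $\ol{X}$, so its $\ol{X}$-centralizer is a torus, and $|C_{\ol{X}_{\ol{\sigma}}}(x)|=|C_{\ol{X}}(x)_{\ol{\sigma}}|\le q^{\ell}$ still holds because $\ol{\sigma}$ acts on that torus with fixed points of size dividing $q^{\ell}$ — one just needs the level $q$ of $S$, not the smaller level of $\ol{X}_{\ol{\tau}}$, in the bound. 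Finally, one checks the genuine graph automorphisms of order $p$ (so $p=2$ for $A_\ell,D_\ell,E_6$ and $p=2,3$ for the cases in (b)); these force $S$ into a short list, handled together with the small-$q$ exceptions.

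The main obstacle, as usual in this circle of ideas, is the small-rank, small-$q$ bookkeeping: verifying that the list in Table \ref{tab:MainTheorempTable} is \emph{complete} and that the stated centralizer bounds are correct requires either an explicit enumeration of the $\ol{\sigma}$-stable maximal tori and the semisimple class representatives (which is delicate for the exceptional groups $G_2$, $F_4$, and the twisted forms) or an appeal to the conjugacy-class data in the literature — and one must be careful that a regular semisimple \emph{element of $S$} (not merely of $\ol{X}_{\ol{\sigma}}$) actually exists in the chosen torus, which can fail for the very smallest groups and is the reason some entries appear in the table rather than satisfying (i).
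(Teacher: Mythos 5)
The decisive problem is that your element $x$ is regular \emph{semisimple}, hence a $p'$-element, whereas the element this theorem must produce is a $p$-element (unipotent when $\alpha=1$). The result is used in the paper only through the quantity $M_p(S)$, whose definition requires a $p$-element of the coset $S\alpha$ with small $S$-centralizer; accordingly the paper's proof begins by announcing that it will exhibit a $p$-element of $S\alpha$, takes $x$ to be a regular \emph{unipotent} element when $\alpha=1$ (via \cite[Propositions 5.1.7 and 5.1.9]{Carter}), and the exceptions in Table \ref{tab:MainTheorempTable} are precisely the bad-prime cases, where the number of regular unipotent classes in $\ol{X}_{\ol{\sigma}}$ can exceed $1$; they are uniform in $q$, not ``small $q$'' degenerations. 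Your identification of the exceptional set with that table therefore cannot be made to work: with semisimple elements, the counting argument in the split torus fails for \emph{every} fixed small $q$ once the rank grows (over $\mathbb{F}_2$ the split torus is trivial, and in general the bound on non-regular elements is useless once $|\Phi|$ is comparable to $q$), so your problematic cases form infinite families unrelated to the bad primes, while entries such as $\PSp_{2\ell}(q)$ with $p=2$ occur in the table for all $q=2^f$. (Also, for twisted types the maximally split torus can have order exceeding $q^{\ell}$, e.g.\ $(q+1)^{\ell}$ for ${}^2A_{\ell}$, so even the torus-order step needs care.)

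The case $\alpha\neq 1$ is also not addressed correctly: you place $x$ inside $\ol{X}_{\ol{\tau}}\le S$, but the theorem requires $x$ to lie in the coset $S\alpha$, which is disjoint from $S$. The paper handles nontrivial field-type $\alpha$ by Shintani descent (\cite[Theorem 2.1 and Lemma 2.16]{Harper}), which produces a unipotent element \emph{of the coset} $S\alpha$ whose $\ol{X}_{\ol{\sigma}}$-centralizer coincides with that of a unipotent element over a smaller field, reducing to the cases $\alpha=1$ or $\alpha$ a graph automorphism. The graph-automorphism cosets of $A_{\ell}(q)$, $D_{\ell}(q)$, $E_6(q)$ for $p=2$ (and $D_4(q)$ for $p=3$) exist in every rank, contrary to your claim that they occur only in small-rank cases destined for the table; the paper treats them using the outer unipotent class data of \cite{LLS} together with the embedding $\mathrm{O}^+_{2\ell}(q)\le \Sp_{2\ell}(q)$ and \cite{GLOB}. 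In short, the proposal proves a statement about $p'$-elements that cannot feed into $M_p(S)$, misidentifies the source and shape of the exceptional list, and leaves the nontrivial cosets untreated.
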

\noindent We will see in Section \ref{sec:prproofs} that when $\alpha=1$, the element $x$ chosen in Theorem \ref{thm:MainTheoremp} is a so-called \emph{regular unipotent} element of $S$. Moreover, the primes involved in Theorem \ref{thm:MainTheoremp}(ii) are the \emph{bad primes} associated to the simple algebraic group $\ol{X}$ (see \cite[page 28]{Carter} for more information).
   \begin{table}[ht!]
       \centering
       \begin{tabular}{c|c|c}
        $S$    & $p$ &  $|C_{\ol{X}_{\ol{\sigma}}}(x)|\le $\\
        \hline
        $\PSp_{2\ell}(q)$    & $2$  &  $2q^{\ell}$\\
        $\POmega^{\pm}_{2\ell}(q)$    & $2$  &  $2q^{\ell}$\\
        ${}^2B_2(q)$    & $2$  &  $2q$ \\
        ${}^2G_2(q)$    & $3$  & $3q$ \\
        ${}^3D_4(q)$    & $2$  & $2q^4$ \\
        $G_2(q)$        &  $2$ & $2q^2$  \\
                        &  $3$ & $3q^2$\\
        ${}^2F_4(q)$    & $2$  & $4q^4$\\
        $F_4(q)$        &  $2$ & $4q^4$\\
                        &  $3$ & $3q^4$\\
        $E^{\pm}_6(q)$        &  $2,3$ & $6q^6$\\
        $E_7(q)$        &  $2$ &  $4q^7$ \\
                        &  $3$ &  $6q^7$\\
        $E_8(q)$        &  $2$ &   $4q^8$\\
                        &  $3$ &   $3q^8$\\ 
                        &  $5$ &   $5q^8$\\ 
                        \hline
       \end{tabular}
       \caption{Upper bounds for centralizer orders of regular unipotent elements of certain simple groups.}
       \label{tab:MainTheorempTable}
   \end{table}

We also prove the following order bound for sporadic or cross characteristic Lie type sections of linear groups.
\begin{thma}\label{thm:MainTheoremr}
Let $p$ be a prime, and let $S$ be a nonabelian finite simple group which is either sporadic or lies in $\mathrm{Lie}(r)$. Suppose that $S$ is a section of $\mathrm{GL}_n(\mathbb{F})$ for some field $\mathbb{F}$ of characteristic $p>0$, with $p\neq r$. Assume also that $S$ is not one of the groups
$M_{12},M_{22},M_{24},J_2,\mathrm{Suz},Co_1,Co_2,\Lg_2(7),\Lg_2(8),\Lg_2(9),\Lg_3(4),\Ug_4(2),\Ug_4(3),\PSp_6(2),$ \\ $\PSp_6(3)$, or $\POmega^+_8(2)$.
Then $|S|\le 2^{2(\log{n})^2+\log_2{n}}$.
\end{thma}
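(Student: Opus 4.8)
The plan is to deduce the bound from a lower bound on the least dimension of a faithful cross-characteristic (projective) representation of $S$.

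The first step is a reduction to representation theory: if a nonabelian finite simple group $S$ is a section of $\mathrm{GL}_n(\mathbb{F})$ with $\mathrm{char}\,\mathbb{F}=p$, then some quasisimple group $L$ with $L/Z(L)\cong S$ has a faithful $\ol{\mathbb{F}_p}$-representation of dimension at most $n$; write $R_p(S)$ for the least such dimension, so that $n\ge R_p(S)$. Indeed, after extending scalars we take a composition series $0=V_0<\dots<V_m=\ol{\mathbb{F}_p}^{\,n}$ of the natural module; the kernel of the action on $\bigoplus_i V_i/V_{i-1}$ consists of unipotent matrices and so is a $p$-group, hence contributes no nonabelian composition factor. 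Thus we may assume the relevant subgroup acts completely reducibly, restrict to an irreducible summand, and run the usual Clifford-theoretic analysis of its layers (generalized Fitting subgroup, induced modules, central tori); the nonabelian simple composition factor $S$ is then forced to appear geometrically, yielding $n\ge R_p(S)$. Since $t\mapsto 2^{2(\log_2 t)^2+\log_2 t}$ is increasing for $t\ge 1$, it suffices to prove $|S|\le 2^{2(\log_2 R)^2+\log_2 R}$ with $R:=R_p(S)$.

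For $S$ sporadic and not in the excluded list, $R_p(S)$ is known for every $p$ from the ordinary and modular character tables, and the inequality is then a finite check; the groups $M_{12},M_{22},M_{24},J_2,\mathrm{Suz},Co_1,Co_2$ are precisely the sporadic groups with a faithful representation small enough, relative to their order, to violate it. For $S\in\mathrm{Lie}(r)$ of untwisted rank $\ell$ over $\mathbb{F}_q$ with $p\ne r$, I would combine the order estimate $|S|<q^{\dim\ol{X}}$ (with $\dim\ol{X}\le 2\ell^2+\ell$ in the classical cases and absolutely bounded in the exceptional cases) with the Landazuri--Seitz--Zalesskii lower bound on $R$ and its refinements (Seitz--Zalesskii, Guralnick--Tiep, Hiss--Malle) -- in particular the Weil-representation bounds $R\gtrsim q^{\ell}$ for symplectic and unitary groups, which are needed when $q$ is small. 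These give $\log_2 R\gtrsim \ell\log_2 q$, so that $2(\log_2 R)^2\gtrsim 2\ell^2(\log_2 q)^2\ge \dim(\ol X)\log_2 q>\log_2|S|$ whenever $\ell$ and $q$ are not both small, which is what is required.

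The estimate in the previous paragraph degrades when $\ell$ and $q$ are simultaneously small, so the finitely many Lie type groups of small rank over small fields must be treated directly, using their exact minimal faithful representation dimensions; these turn out to be exactly $\mathrm{L}_2(7),\mathrm{L}_2(8),\mathrm{L}_2(9),\mathrm{L}_3(4),\mathrm{U}_4(2),\mathrm{U}_4(3),\mathrm{PSp}_6(2),\mathrm{PSp}_6(3),\mathrm{P\Omega}^+_8(2)$, for which the stated inequality genuinely fails, and which therefore must be excluded. The main obstacle is precisely this boundary analysis: the constant $2$ in the exponent is calibrated so that the best available lower bounds on cross-characteristic representation dimensions suffice for all but these finitely many groups, and confirming that the excluded list is complete -- i.e. that no further small classical, twisted, or exceptional group of small level slips through -- is the delicate part of the argument.
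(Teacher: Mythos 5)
Your argument hinges on the reduction ``$S$ a section of $\GL_n(\mathbb{F})$ in characteristic $p$, $S\notin\mathrm{Lie}(p)$, implies $n\ge R_p(S)$'', and this step is false. When one runs the Clifford-theoretic analysis of an irreducible (indeed primitive) linear group, a nonabelian simple section need not ``appear geometrically'' as a projective representation of dimension dividing $n$: it can also arise as a section of $\Sym_t$ (permuting blocks or tensor factors, $t\le n$), or as a section of $\Sp_{2m}(r)$ acting on a normal symplectic-type $r$-subgroup $r^{1+2m}$ with $r^m\mid n$. In the latter case $R_p(S)$ can be far larger than $n$. The paper's own sharpness example is exactly this: for odd $p$, the normalizer in $\GL_{2^m}(\ol{\mathbb{F}_p})$ of a symplectic-type group $4\circ 2^{1+2m}$ has $\Sp_{2m}(2)$ as a section, so $S=\Sp_{2m}(2)$ is a section of $\GL_n$ with $n=2^m$, while $R_p(\Sp_{2m}(2))$ grows like $2^{2m}$ (already for $m=4$, $S=\Sp_8(2)$ is not on the excluded list, $n=16$, but its smallest faithful cross-characteristic projective representation has dimension $35$). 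So your inequality $|S|\le 2^{2(\log_2 R)^2+\log_2 R}$, even once established, does not transfer to the bound in $n$, and it is precisely these symplectic sections that make the theorem's bound tight and force the extra $\log_2 n$ in the exponent (via $|\Sp_{2m}(r)|\le r^{m(2m+1)}\le n^{2\log_2 n+1}$); no bound phrased through $R_p(S)\le n$ can reach them.

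This is why the paper proceeds differently: after a Frattini-type reduction (choose $X$ minimal with $XY$ covering, so $Y\le\Frat(X)$, pass to an irreducible constituent), the imprimitive case is handled through minimal permutation degrees (a section of $\Sym_t$ with $t\le n$, bounded by $P(S)^{2\log_2 P(S)}$), and in the primitive case Corollary \ref{cor:PrimSimpleCor} gives the trichotomy: $S$ a section of $\Sp_{2m}(r)$ with $r^m\mid n$, $S$ a section of $\Sym_t$ with a divisor $s^t$ of $n$, or $S$ with a projective irreducible representation of dimension $s\mid n$. Only in the last branch does the Landazuri--Seitz-type bound via $R_p(S)$ (the paper's Lemma \ref{lem:RepSimple}) enter, which is where your list of exceptional groups belongs. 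Your sketch of the Lie-type and sporadic estimates in that branch is broadly in the spirit of the paper, but without the structural trichotomy your proof does not cover the symplectic and permutation branches, so the argument as proposed has a genuine gap.
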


The bound in Theorem \ref{thm:MainTheoremr} is best possible, up to multiplication by an absolute constant. Indeed, as we will see in Section \ref{sec:asymptotic}, there are primitive linear groups $G\le\mathrm{GL}_{2^m}(\ol{\mathbb{F}_p})$, for $p$ odd, with sections isomorphic to $\Sp_{2m}(2)$ (and $\Sp_{2m}(2)$ has order $c2^{2m^2+m}$, where $1/2< c< 1$ is an absolute constant).

The layout of the paper is as follows. In Section \ref{sec:pcount}, we prove two key inductive lemmas which will enable us to obtain lower bounds on the number of $p$-elements in any given finite group. The second of these lemmas will also demonstrate the motivation behind Theorems \ref{thm:MainTheoremp} and \ref{thm:MainTheoremr}. In Section \ref{sec:AltThmSec}, motivated by our work in Section \ref{sec:pcount}, we derive an upper bound on the minimal order of a centralizer of a $p$-element in a finite alternating group. We then have all the tools we need to prove Theorem \ref{thm:MainTheoremp}, and we do so in Section \ref{sec:prproofs}. In Section \ref{sec:primsec}, we make a necessary detour to prove some results on the structure of a finite primitive linear group. These results will allow us to prove Theorems \ref{thm:MainTheoremPerm} and \ref{thm:MainTheoremr}, and we do this in Section \ref{sec:asymptotic}. Finally, we prove Theorem \ref{thm:MainTheoremPrimLin} in Section \ref{sec:PrimLin}, and Theorems \ref{thm:JordanAnalogue} and \ref{thm:JordanIrr} in Section \ref{sec:Jordanproof}.

\vspace{0.25cm}
\noindent \textbf{Notation}: Most of our notation is standard. For a prime $r$, we write $r^m$ and $r^{1+2m}$ for elementary abelian and extraspecial $r$-groups of order $r^m$ and $r^{1+2m}$, respectively. We also occasionally write $n$ for the cyclic group $\mathbb{Z}/n\mathbb{Z}$, when there is no danger of confusion. For almost simple group names, we follow \cite{KL}, except that we use $\Alt_n$ and $\Sym_n$ for the alternating and symmetric groups of degree $n$. When it is natural to do so, we will also occasionally use Lie theoretic notation for simple groups in place of classical notation (so $A_{\ell}(q)$ instead of $\Lg_{\ell+1}(q)$ etc.). 

For a finite group $G$, we will write $Z(G)$, $\mathrm{Frat}(G)$, $F^*(G)$, and $G'$ for the centre, Frattini subgroup, generalized Fitting subgroup, and derived subgroup of $G$, respectively. For a field $\mathbb{F}$, we will write $\ol{\mathbb{F}}$ for the algebraic closure of $\mathbb{F}$. Finally, for two groups $A$ and $B$, $A\circ B$ will denote the central product of $A$ and $B$.

\section{Counting $p$-elements in finite groups}\label{sec:pcount}
As mentioned above, the purpose of this section is to prove two inductive lemmas which will allow us to count the number of $p$-elements in a given finite group $G$. The first of these lemmas will enable us to reduce our enumeration to the case where $G$ is characteristically simple, while the second gives a lower bound on the number of $p$-elements of $G$ in terms of similar numbers associated to the composition factors of $G$. The first lemma can in fact be proved when $p$ is replaced by an arbitrary finite set of primes, and we do prove this more general version here.

In order to state the lemmas, we require some notation. For a subset $X$ of a finite group $G$ and a set $\pi$ of primes, write $\mathrm{Ord}(X,\pi)$ for the set of $\pi$-elements of $X$ (that is, the set of elements $x\in X$ such that all prime divisors of $|x|$ lie in $\pi$). If $G=S^t$ is the direct power of a simple group $S$, define the function $f_{\pi}$ by $f_{\pi}(G):=1$ if $S$ is an abelian $\pi'$-group; $f_{\pi}(G):=|G|$ if $G$ is an abelian $\pi$-group; and 
\begin{align*}
f_{\pi}(G):=\min\{|\Ord(G\alpha,\pi)|\text{ : }\alpha\in \mathrm{Ord}(\Aut(G),\pi)\}
\end{align*}
if $S$ is nonabelian. In this latter case, we are viewing $G\alpha$ as a subset of $\Aut(G)$. If $\pi$ consists of a single prime $p$, then we will write $\Ord(G,p)$ and $f_p(G)$ in place of $\Ord(G,\{p\})$ and $f_{\{p\}}(G)$.

The first lemma can now be given as follows. 
\begin{lemma}\label{lem:CrucialRed}
Let $G$ be a finite group, and let $\pi$ be a set of primes.
Then $|\Ord(G,\pi)|\geq\prod_{C\text{ a chief factor of }G}f_{\pi}(C)$.
\end{lemma}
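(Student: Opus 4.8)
The plan is to induct on the number of chief factors of $G$, using the chief series to peel off one factor at a time. The base case is $G = 1$, where both sides equal $1$. For the inductive step, let $N$ be a minimal normal subgroup of $G$, so that $N \cong S^t$ for some simple group $S$, and $N$ is a chief factor of $G$. By induction applied to $G/N$ (whose chief factors are exactly the remaining chief factors of $G$), we have $|\Ord(G/N, \pi)| \geq \prod_{C \text{ chief factor of } G/N} f_\pi(C)$, so it suffices to prove the multiplicative estimate
\[
|\Ord(G,\pi)| \;\geq\; f_\pi(N)\cdot |\Ord(G/N,\pi)|.
\]

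To prove this, I would consider the natural projection $\rho\colon G \to G/N$ and count $\pi$-elements fibre by fibre. The key point is: for each $\pi$-element $\bar g \in \Ord(G/N,\pi)$, the coset $gN$ (for any preimage $g$) contains at least $f_\pi(N)$ elements of $\Ord(G,\pi)$. Granting this, summing over the $|\Ord(G/N,\pi)|$ cosets gives the bound, since distinct cosets are disjoint. To establish the fibre claim, fix $\bar g$ of $\pi$-order, and pick the preimage $g$ to itself be a $\pi$-element (possible: lift $\bar g$ to any preimage, take its $\pi$-part; since $\bar g$ is a $\pi$-element, the $\pi$-part still maps onto $\bar g$). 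Conjugation by $g$ induces an automorphism $\alpha$ of $N$, and since $g$ has $\pi$-order, so does $\alpha$ as an element of $\Aut(N)$ — more precisely, the image of $g$ in $\Aut(N) \rtimes \langle \text{outer data}\rangle$, i.e. we view the coset $Ng$ inside $N\langle g\rangle \le \Aut(N)\cdot\langle\alpha\rangle$ appropriately. The elements of $Ng$ that are $\pi$-elements correspond, under the isomorphism $gN \to Ng$ type identification, to $\Ord(N\alpha, \pi)$ in the semidirect-product sense, and by definition of $f_\pi$ this set has size at least $f_\pi(N)$. One must handle the three cases in the definition of $f_\pi$ separately: if $S$ is an abelian $\pi'$-group then $f_\pi(N)=1$ and the claim is trivial (the coset contains at least the one element $g$ of $\pi$-order); if $N$ is an abelian $\pi$-group then every element of $gN$ is a $\pi$-element (as $gN \subseteq \langle g, N\rangle$, a $\pi$-group, using that $g$ was chosen a $\pi$-element and $N$ is a normal $\pi$-subgroup), so the coset contributes $|N| = f_\pi(N)$; and if $S$ is nonabelian the definition of $f_\pi$ as a minimum over $\alpha \in \Ord(\Aut(N),\pi)$ gives exactly what is needed.

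The main obstacle I anticipate is the careful bookkeeping in the nonabelian case: one needs to verify that $|\{x \in Ng : x \text{ is a } \pi\text{-element}\}| = |\Ord(N\alpha',\pi)|$ where $\alpha'$ is the image of $g$ in $\Aut(N)$, and that this identification is legitimate even though $g$ may not centralize anything and $\langle N, g\rangle$ need not split over $N$. The resolution is that the map $N\langle g \rangle \to \Aut(N)$ (conjugation action), composed with suitable identification, sends the coset $Ng$ onto the coset $\inn(N)\alpha'$, and is a bijection on the relevant cosets once we note $C_G(N) \cap N = Z(N) = 1$ for $N$ a product of nonabelian simple groups — so $N$ embeds in $\Aut(N)$ and $Ng \hookrightarrow \Aut(N)$, with image a full coset of $\inn(N) \cong N$. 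Then being a $\pi$-element is preserved and reflected by this embedding (orders are preserved), and the minimum in the definition of $f_\pi(N)$ gives the lower bound $f_\pi(N)$ uniformly over the choice of $\bar g$. Finally, one checks the edge case where $g$ itself cannot be chosen as a $\pi$-element consistently — but this never obstructs us, since we only need a lower bound and can always extract the $\pi$-part of one fixed preimage per coset.
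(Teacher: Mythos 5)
Your proposal follows essentially the same route as the paper: reduce to the single-step inequality $|\Ord(G,\pi)|\geq f_{\pi}(N)\,|\Ord(G/N,\pi)|$ for a minimal normal subgroup $N$, count fibrewise over cosets, lift each $\pi$-element of $G/N$ to a $\pi$-element $g$ of $G$ via its $\pi$-part, split into the three cases of $f_{\pi}$, and in the nonabelian case pass to $\Aut(N)$ via the conjugation map on $H:=N\langle g\rangle$. One justification is off, though not fatally: the bijection $Ng\to\inn(N)\theta(g)$ (injective because $C_H(N)\cap N=Z(N)=1$) does \emph{not} preserve orders in general, since the conjugation map $\theta$ can have nontrivial kernel $C_H(N)$; what you actually need is that $\pi$-elements of the image coset pull back to $\pi$-elements, and this holds because $C_H(N)$ is a $\pi$-group --- it meets $N$ trivially, so it embeds in $H/N=\langle \overline{g}\rangle$, a cyclic $\pi$-group by your choice of $g$. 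This is exactly the observation the paper makes explicit, and with it your argument is complete.
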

\begin{proof}
Let $N$ be a minimal normal subgroup of $G$. Clearly, it will suffice to prove that 
\begin{align}\label{lab:top}
 |\Ord(G,\pi)|\geq f_{\pi}(N)|\Ord(G/N,\pi)|.    
\end{align}
To this end, notice that a (right) coset $Ng$ of $N$ in $G$ contains a $\pi$-element only if $Ng$ is a $\pi$-element of $G/N$. Thus, writing bars to denote reduction modulo $N$, we have $|\Ord(G,\pi)|=\sum_{\ol{g}\in \Ord(\ol{G},\pi)} |\Ord(Ng,\pi)|$.
Thus, to prove (\ref{lab:top}), it will suffice to show that if $\ol{g}\in \Ord(\ol{G},\pi)$, then 
\begin{align}\label{lab:top2}
\text{$|\Ord(Ng,\pi)|\geq f_{\pi}(N)$.}  
\end{align}
So fix $g\in G$ with $\ol{g}\in \Ord(\ol{G},\pi)$. Since $\Ord(\ol{G},\pi)=\Ord(G,\pi)N/N$, we may assume that $g\in \Ord(G,\pi)$. Set $H:=\langle N,g\rangle=N\langle g\rangle$. Suppose first that $N$ is an elementary abelian $r$-group, for some prime $r$. If $r\not\in\pi$, then $f_{\pi}(N)=1$; while if $r\in\pi$, then $H$ is a $\pi$-group, so $|\Ord(Ng,\pi)|=|Ng|=|N|=f_{\pi}(N)$. The assertion (\ref{lab:top2}) follows in each case.

Finally, assume that $N$ is non-abelian.
Since $C_H(N)\cap N=1$ and $\pi(|H/N|)\subseteq\pi$, the group $C_H(N)$ is a $\pi$-group. It follows that $|\Ord(Ng,\pi)|$ is at least the number of $\pi$-elements in the coset $\theta(Ng)$, where $\theta$ is the natural map $\theta:H\rightarrow H/C_H(N)$. Since $|\Ord(\theta(Ng),\pi)|\geq f_{\pi}(N)$ by definition of $f_{\pi}$, the claim at (\ref{lab:top2}) follows, and the proof is complete.
\end{proof}

To apply Lemma \ref{lem:CrucialRed} in the case $\pi=\{p\}$, we will need lower bounds on the quantity $f_{p}(N)$ when the chief factor $N$ of $G$ is nonabelian. Our next lemma shows that such bounds can be derived from information on the $S$-conjugacy classes in $\Aut(S)$-cosets of finite simple groups $S$.
\begin{lemma}\label{lem:fpnonab}
Fix a nonabelian finite simple group $S$, a positive integer $t$, and a prime $p$. Set $N:=S^t$, and let $f_{p}(N)$ be as defined above. Assume that each coset of $S$ in $\Aut(S)$ contains an $S$-conjugacy class of $p$-elements of size at least $M$. Then $f_{p}(N)\geq M^t$.
\end{lemma}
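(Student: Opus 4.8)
The plan is to set up an injection from a suitably chosen set of $p$-elements of $N\alpha$ (for an arbitrary $p$-element $\alpha\in\Aut(N)$) into a product of $t$ cosets of $S$ in $\Aut(S)$, and then count using the hypothesis. First recall that $\Aut(S^t)=\Aut(S)\wr \Sym_t$, so we may write $\alpha=(\alpha_1,\dots,\alpha_t)\pi$ with each $\alpha_i\in\Aut(S)$ and $\pi\in\Sym_t$; moreover, since $\alpha$ is a $p$-element, its image $\pi$ in $\Sym_t$ is a $p$-element. The key reduction is to the case $\pi=1$: if $\pi$ has a nontrivial cycle $(i_1\,i_2\,\cdots\,i_r)$ with $r=p^a>1$, then inside that ``block'' of coordinates, passing to the $r$-th power identifies the relevant coset data with a single coset of $S$ in $\Aut(S)$, and the diagonal-type embedding shows the number of $p$-elements in $N\alpha$ restricted to that block is itself at least $M$ (in fact at least the number of $p$-elements in a single coset $S\beta$, where $\beta=\alpha_{i_r}\cdots\alpha_{i_1}$ is the product around the cycle). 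More precisely, for coordinates permuted in an $r$-cycle, the map $x\mapsto x^r$ sends the coset to the ``diagonal'' copy of $S\beta$, and each $p$-element of $S\beta$ has exactly one $r$-th root of the appropriate form in the block; so the count over the block is at least $M$ as well.

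Concretely, I would argue as follows. Fix $\alpha=(\alpha_1,\dots,\alpha_t)\pi\in\Ord(\Aut(N),p)$. Decompose $\{1,\dots,t\}$ into the $\pi$-orbits $O_1,\dots,O_s$, of sizes $r_1,\dots,r_s$ (each $r_j$ a power of $p$, summing to $t$). For the $j$-th orbit, let $\beta_j\in\Aut(S)$ be the product of the $\alpha_i$ around the cycle (in the natural cyclic order), so that $\beta_j$ lies in a single coset $S\gamma_j$ of $S$ in $\Aut(S)$ depending only on the cosets $S\alpha_i$; note $\gamma_j$ is again a $p$-element since $\alpha$ is. By hypothesis, the coset $S\gamma_j$ contains an $S$-conjugacy class $K_j$ of $p$-elements with $|K_j|\ge M$. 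Now I claim: for each choice of $(k_1,\dots,k_s)\in K_1\times\cdots\times K_s$ there is at least one $p$-element $x$ of the coset $N\alpha$ whose ``$j$-th block $r_j$-th power'' (i.e. the element of $S\gamma_j$ obtained by cycling) is $S$-conjugate into $K_j$ via the data $(k_1,\dots,k_s)$, and distinct tuples give distinct $x$. Building $x$ blockwise: in each orbit $O_j$ one sets all-but-one of the block coordinates to suitable identity-like values and the last to force the cycled product to equal $k_j$; the resulting $x$ is a $p$-element of $N\alpha$ because each block contributes a $p$-element (its $r_j$-th power is the $p$-element $k_j\in S\gamma_j$, and $r_j$ is a $p$-power). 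This produces at least $\prod_{j=1}^s |K_j|\ge M^s \ge M^t$ distinct $p$-elements of $N\alpha$ — wait, one must be careful: $s\le t$, so $M^s$ is only $\ge M^t$ when $M\ge 1$, which holds trivially. Actually the cleaner bookkeeping is that each orbit of size $r_j$ contributes a factor of at least $M$ (not $M^{r_j}$), and $s$ orbits give $M^s$; since $M\ge 1$ and we want $M^t$, we instead note $t=\sum r_j\ge s$ only gives $M^t\ge M^s$, the wrong direction. So the correct statement to prove blockwise must be that an orbit of size $r_j$ contributes a factor of at least $M^{r_j}$: within a single $r_j$-cycle one has $r_j-1$ ``free'' coordinates each ranging (essentially) over a full coset or at least over $S$, contributing $M^{\,r_j-1}$ choices, times the $M$ choices for the cycled product, giving $M^{r_j}$; multiplying over $j$ gives $\prod_j M^{r_j}=M^{\sum r_j}=M^t$, as required.

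The main obstacle, therefore, is the careful treatment of a nontrivial permutation part $\pi$: one must verify that within an $r$-cycle the block coordinates can genuinely be chosen with $r-1$ independent factors of $M$ (by choosing $r-1$ of them to range over an $S$-conjugacy class of $p$-elements inside the appropriate cosets, and solving for the last to pin down the cycled product), and that every element so constructed is indeed a $p$-element of the coset $N\alpha$ — this uses that the order of such a blockwise element equals $r$ times the order of its cycled-product component, which is a $p$-power since both factors are. Once the $\pi=1$ case is in hand (where the statement is immediate: the coset $N\alpha=S\alpha_1\times\cdots\times S\alpha_t$ and each factor contributes $\ge M$, giving $\ge M^t$ directly, hence $f_p(N)\ge M^t$), the general case reduces to it block-by-block. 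I would also double-check the degenerate possibilities (e.g. when some $\alpha_i$ is inner, or when $S\gamma_j=S$ itself, where the relevant conjugacy class of $p$-elements of $S$ of size $\ge M$ is guaranteed by the hypothesis applied to the trivial coset) to make sure no special case of the blockwise construction collapses.
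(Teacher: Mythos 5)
Your proposal is correct, and while it shares the paper's overall framework, the crucial counting step is done differently. Both arguments identify $\Aut(S^t)$ with $\Aut(S)\wr\Sym_t$, decompose the coset $N\alpha$ according to the cycles of the permutation part, and use the fact that an element $(y_1,\hdots,y_r)\sigma$ supported on an $r$-cycle ($r$ a $p$-power) has order $r|y_1\hdots y_r|$, so it is a $p$-element exactly when the cycled product is. At that point the paper fixes, for each $S$-class representative $\alpha$ of $p$-elements in the product coset, a single element $x(\alpha)=(ax_1,x_2,\hdots,x_r)\sigma$ of the block coset, and bounds its centralizer by $|C_N(x(\alpha))|\le |C_S(\alpha)|^r$, so that already one $N$-class has size at least $M^r$. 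You instead parametrize $p$-elements of the block coset directly: choose the first $r-1$ coordinates freely in their cosets (each gives at least $M$ choices, since $M$ is at most $|S|$, an $S$-class being an $S$-orbit) and solve uniquely for the last coordinate so that the cycled product equals a prescribed $p$-element of the product coset (at least $M$ choices by hypothesis); the map from these data to tuples is injective, the solved coordinate lies in the correct coset $S\alpha_{i_r}$ by normality of $S$ in $\Aut(S)$, and every tuple so produced is a $p$-element of the block coset by the order formula. This gives at least $M^{r-1}\cdot M=M^r$ per block and hence $M^t$ overall, as required. Your route buys a more elementary count (no centralizer or conjugacy computations in the wreath product) and dispenses with the paper's reduction, via a Sylow subgroup of the form $P\wr Q$, to the case where the coordinates $\alpha_i$ are themselves $p$-elements: you only need that the coset of the cycled product contains at least $M$ $p$-elements, which the hypothesis supplies. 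The paper's route, in exchange, localizes the bound in a single conjugacy class, which is the form in which the estimate is reused elsewhere (centralizer orders drive the definition of $M_p$). Two cosmetic points: the middle of your write-up briefly records the insufficient count ($M$ per orbit, giving only $M^s$) before you correct it, and the early remark that each $p$-element of $S\beta$ has ``exactly one $r$-th root of the appropriate form'' should be replaced by the corrected bookkeeping ($|S|^{r-1}$ such roots, one for each choice of the free coordinates); neither affects the final argument.
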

\begin{proof}
We note first that $\Aut(N)\cong \Aut(S)\wr\Sym_t$, and the conjugation action of $\Aut(N)$ on $S^t$ is given by $(s_1,\hdots,s_t)^{(h_1,\hdots,h_t)}=(s_1^{h_1},\hdots,s_t^{h_t})$, and $(s_1,\hdots,s_t)^{\sigma^{-1}}=(s_{1^{\sigma}},\hdots,s_{t^{\sigma}})$, where $h_i,s_i\in S$, and $\sigma\in\Sym_t$.

Now, let $g$ be a $p$-element of $\Aut(N)$, and write $g=(x_1,\hdots,x_t)\sigma$, where $g_i\in \Aut(S)$, and $\sigma\in \Sym_t$. If $P\in\Syl_p(\Aut(S))$ and $Q\in\Syl_p(\Sym_t)$, then $P\wr Q\cong P^t\rtimes Q$ is a Sylow $p$-subgroup of $\Aut(N)$. Thus, we may assume that the elements $x_1,\hdots,x_t,\sigma$ have $p$-power order.

Suppose that $\sigma$ has orbit lengths $t_1,\hdots,t_m$, and write $\sigma=\sigma_1\hdots\sigma_m$ as a product of disjoint cycles, where $\sigma_i$ is a cycle of length $t_i$. Without loss of generality, assume that $\sigma_1=(1,\hdots,t_1)$, $\sigma_2=(t_1+1,\hdots,t_1+t_2)$, and so on. Set $g_1:=(x_1,\hdots,x_{t_1})\sigma_1$, $g_2:=(x_{t_1+1},\hdots,x_{t_1+t_2})\sigma_2$, etc. Then $g_i$ is a $p$-element of $\Aut(S)\wr\Sym_{t_i}$ for each $1\le i\le m$, and the coset $Ng$ is the cartesian product of the sets $N_1g_1\times\hdots\times N_mg_m$, where $N_i=S^{t_i}$. It clearly follows that $f_p(N)\geq f_p(N_1)\hdots f_p(N_m)$. Thus, we may assume that $m=1$ and that $\sigma=(1,\hdots,t)$ is a $t$-cycle. In particular, $t$ is a power of $p$.

Now, fix $y:=(y_1,\hdots,y_t)\sigma\in Ng$, $z:=(z_1,\hdots,z_t)\sigma\in Ng$, and $h:=(h_1,\hdots,h_t)\in N$, where $h_i,y_i,z_i\in \Aut(S)$. Then a routine exercise shows that $y^h=z$ if and only if $h_i^{-1}y_ih_{i+1}=z_i$ for each $1\le i\le t$, and $h_t^{-1}y_th_{1}=z_t$. In particular,
\begin{align}\label{lab:rel}
\text{if $y^h=z$, then $h_1^{-1}(y_1\hdots y_{t})h_1=z_1\hdots z_{t}$.}   
\end{align}
More generally, $h_i^{-1}(y_iy_{i+1}\hdots y_ty_1\hdots y_{i-1})h_1=z_iz_{i+1}\hdots z_tz_1\hdots z_{i-1}$ for all $i$. 
We also have $y^t=(y_1y_2\hdots y_t,y_2y_3\hdots y_ty_1,\hdots,y_ty_1\hdots y_{t-1})$. In particular,
\begin{align}\label{lab:rel2}
\text{$y$ has order $t|y_1\hdots y_t|$.}
\end{align}

Next, let $\mathcal{C}(S)$ be a set of representatives for the $S$-conjugacy classes of $p$-elements in the coset $Sx_1\hdots x_t\subseteq \Aut(S)$. Fix $\alpha\in \mathcal{C}(S)$ with $|\alpha^S|\geq M$, and write $\alpha=ax_1\hdots x_t$, with $a\in S$. Consider the element \begin{align*}x(\alpha):=(a,1,\hdots,1)g=(ax_1,x_2,\hdots,x_t)\sigma\in Ng.
\end{align*}
Since $\alpha=ax_1\hdots x_t$ is a $p$-element of $\Aut(S)$, (\ref{lab:rel2}) implies that $x(\alpha)$ is a $p$-element of $Ng$ (recall that $t$ is a power of $p$). Furthermore, (\ref{lab:rel}) implies that if $\alpha\neq \beta\in \mathcal{C}(S)$, then $x(\alpha)$ and $x(\beta)$ are not $N$-conjugate. Moreover, (\ref{lab:rel}) and the comments afterward imply that if $(h_1,\hdots,h_t)\in C_N(x(\alpha))$, then $h_1\in C_S(ax_1\hdots x_t)$ and $h_i\in $ $C_S(x_ix_{i+1}\hdots x_tax_1\hdots x_{i-1})$ for $i>1$. Since $ax_1\hdots x_t$ and $x_ix_{i+1}\hdots x_tax_1\hdots x_{i-1}$ are $\Aut(S)$-conjugate for $i>1$, we deduce that $|C_N(x(\alpha))|\le |C_S(ax_1\hdots x_t)|^t$. Hence, $|x(\alpha)^N|\geq |\alpha^S|^t\geq M^t$.

Finally, let $\mathcal{C}(N)$ be a set of representatives for the $N$-conjugacy classes of $p$-elements in the coset $Ng$, so that $|\Ord(Ng,p)|=\sum_{x\in\mathcal{C}(N)}|x^N|$. By our work in the previous paragraph, we have 
\begin{align*}
    |\Ord(Ng,p)|=\sum_{x\in\mathcal{C}(N)}|x^N|\geq \sum_{\alpha\in C(S)}|x(\alpha)^N|\geq \sum_{\alpha\in C(S)}|\alpha^S|^t\geq M^t.
\end{align*}
This completes the proof.
\end{proof}

We can see from Lemmas \ref{lem:CrucialRed} and \ref{lem:fpnonab} that lower bounds on the sizes of $S$-conjugacy classes of $p$-elements in $\Aut(S)$-cosets of nonabelian finite simple groups $S$ can be used to determine lower bounds on the number of $p$-elements in an arbitrary finite group. This is where our motivation for Theorems \ref{thm:MainTheoremp} and \ref{thm:MainTheoremr} come from. We will prove these theorems in Sections \ref{sec:prproofs} and \ref{sec:asymptotic} respectively.

We conclude this section with a useful corollary of Lemmas \ref{lem:CrucialRed} and \ref{lem:fpnonab}. To state it, we require the following notation: for a finite simple group $S$ and a prime $p$, define $M_p(S):=|S|$ if $S$ is a cyclic $p'$-group; $M_p(S):=1$ if $S$ is cyclic of order $p$; and if $S$ is a nonabelian finite simple group, then define $M_p(S)$ to be the minimum $M$ such that whenever $\alpha$ is a $p$-element of $\Aut(S)$, then the coset $S\alpha\subseteq \Aut(S)$ contains a $p$-element $x$ with $|C_S(x)|\le M$. Finally, for a finite group $G$, define $M_p(G)=\prod_{S\text{ a composition factor of }G}M_p(S)$.

We also need the following standard terminology: If $G_1,\hdots,G_t$ are finite groups, we say that $G$ is a \emph{subdirect subgroup} of the direct product $X:=G_1\times\hdots\times G_t$ if $G\le X$ and the projection map $\pi_i:G\rightarrow G_i$ is surjective for all $1\le i\le t$.
Our corollary can now be stated as follows.   
\begin{cor}\label{cor:MpCor}
Fix a prime $p$, and a finite group $G$. Then:
\begin{enumerate}[\upshape(i)]
    \item If $N\unlhd G$, then $M_p(G)=M_p(N)M_p(G/N)$;
    \item $M_p(G)\geq |G|/\Ord(G,p)$; and
    \item Suppose that $G$ is a subdirect subgroup of a direct product $G_1\times\hdots\times G_t$ of finite groups $G_i$. Then $M_p(G)\le \prod_{i=1}^tM_p(G_i)$.
\end{enumerate}
\end{cor}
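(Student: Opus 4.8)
The plan is to deduce all three parts from Lemmas \ref{lem:CrucialRed} and \ref{lem:fpnonab}, together with elementary facts about composition factors. For part (i), I would first recall that the multiset of composition factors of $G$ is the disjoint union of the multiset of composition factors of $N$ and that of $G/N$; this is the Jordan--H\"older theorem applied to a composition series of $G$ passing through $N$. Since $M_p(G)$ is defined as the product $\prod_S M_p(S)$ over composition factors $S$ of $G$ (counted with multiplicity), and each factor $M_p(S)$ depends only on the isomorphism type of $S$, the identity $M_p(G)=M_p(N)M_p(G/N)$ is then immediate from regrouping the product. No real work here beyond citing Jordan--H\"older.

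For part (ii), the key observation is that for a nonabelian simple $S$ the quantity $M_p(S)$ defined here is exactly a uniform upper bound on $|C_S(x)|$ over a well-chosen $p$-element $x$ in each coset $S\alpha$, hence $|S|/M_p(S)$ is a lower bound on the size of the smallest such conjugacy class; that is, $M_p(S)\geq |S|/f_p(S^1)$ in the language preceding Lemma \ref{lem:fpnonab}, and more precisely $M_p(S) = \max_{\alpha}\min\{|C_S(x)| : x \in S\alpha,\ x\text{ a }p\text{-element}\}$, while $f_p(S) = \min_\alpha |\Ord(S\alpha,p)| \geq \min_\alpha |S|/M_p(S) = |S|/M_p(S)$ (using that a single class of size $|S\alpha|/|C_S(x)|$ already sits inside $\Ord(S\alpha,p)$). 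For abelian composition factors $C_r$, one checks directly that $M_p(C_r) \geq |C_r|/|\Ord(C_r,p)|$: if $r\neq p$ then $M_p(C_r)=r$ and $|\Ord(C_r,p)|=1$; if $r=p$ then $M_p(C_p)=1$ and $|\Ord(C_p,p)|=p$, so equality holds. Thus in all cases $f_p(C)\geq |C|/M_p(C)$ for every chief factor $C$ (where for a nonabelian chief factor $C=S^t$ one combines $f_p(S^t)\geq (|S|/M_p(S))^t$ from Lemma \ref{lem:fpnonab} with $M_p(S^t)=M_p(S)^t$ from part (i)). Then Lemma \ref{lem:CrucialRed} gives
\[
|\Ord(G,p)| \geq \prod_{C} f_p(C) \geq \prod_{C} \frac{|C|}{M_p(C)} = \frac{|G|}{\prod_C M_p(C)} = \frac{|G|}{M_p(G)},
\]
using $\prod_C |C| = |G|$ and $\prod_C M_p(C) = M_p(G)$ (the latter again by part (i), since the chief factors refine into the composition factors).

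For part (iii), I would use that the projections $\pi_i : G \to G_i$ are surjective, so each $G_i$ is a quotient of $G$; hence every composition factor of $G_i$ is a composition factor of $G$. Conversely --- and this is the point that needs care --- I claim the multiset of composition factors of $G$ is contained in the disjoint union of the multisets of composition factors of the $G_i$. This follows by induction on $t$: the kernel $K_1$ of $\pi_1$ embeds (via the remaining projections) as a subdirect subgroup of $G_2\times\cdots\times G_t$, and $G/K_1 \cong G_1$, so the composition factors of $G$ are those of $G_1$ together with those of $K_1$, and by induction the latter are contained in $\bigcup_{i\geq 2}$ (composition factors of $G_i$). Since $M_p(H) = \prod_{S} M_p(S)$ over composition factors with multiplicity, passing from a sub-multiset to the full multiset only multiplies in extra factors $M_p(S)\geq 1$, giving $M_p(G) \leq \prod_{i=1}^t M_p(G_i)$.

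The main obstacle is the bookkeeping in part (iii): one must be careful that "subdirect subgroup" does not mean $G$ contains all composition factors of each $G_i$ with the right multiplicity --- it need not --- so the inequality genuinely goes only one way, and the induction via $\ker\pi_1 \hookrightarrow G_2\times\cdots\times G_t$ must be set up so that the embedded copy is again subdirect (which it is, since for $i\geq 2$ the composite $\ker\pi_1 \to G \to G_i$ need not be onto, but one can instead replace $G_i$ by $\pi_i(\ker\pi_1)$, observe $M_p$ of a subgroup is at most $M_p$ of the group by the quotient remark applied to composition factors, wait --- that is false in general). More carefully: a subgroup's composition factors need not be among the overgroup's, so the cleanest route is to argue directly that the composition factors of $G$ (with multiplicity) inject into $\biguplus_i$ (composition factors of $G_i$) by refining a composition series of $G$ adapted to the chain $G \geq \ker\pi_1 \geq \ker\pi_1\cap\ker\pi_2 \geq \cdots \geq 1$, noting each successive quotient $\bigl(\bigcap_{j<i}\ker\pi_j\bigr)\big/\bigl(\bigcap_{j\leq i}\ker\pi_j\bigr)$ embeds in $G_i$ as a normal subgroup (being a quotient of $\bigcap_{j<i}\ker\pi_j \trianglelefteq G$, mapped into $G_i$), hence its composition factors are among those of $G_i$. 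That normality is the delicate point and is what makes the multiplicities work out.
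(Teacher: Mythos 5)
Your proposal is correct and takes essentially the same route as the paper: parts (i) and (ii) are exactly the intended applications of Jordan--H\"older and of Lemmas \ref{lem:CrucialRed} and \ref{lem:fpnonab} (with $M=|S|/M_p(S)$, noting a class of a $p$-element of small centralizer lies inside $\Ord(S\alpha,p)$), and your final kernel-chain argument for (iii) is just the unrolled form of the paper's induction, whose key point is likewise that a normal subgroup of $G$ has normal image in each $G_i$ because the projections are surjective (the paper phrases this as $G\cap\hat{G}_1\unlhd G_1$ and then inducts on $t$). The false start you flag in (iii) is indeed false, but your corrected version replaces it, so nothing is missing.
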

\begin{proof}
Part (i) follows immediately from the definition of the function $M_p$, while (ii) follows from Lemmas \ref{lem:CrucialRed} and \ref{lem:fpnonab}. For (iii), note that if $G$ is a subdirect subgroup of $G_1\times\hdots\times G_t$, then the intersection of $G$ with the coordinate subgroup $\hat{G}_1:=\{(g_1,1,\hdots,1)\text{ : }g_1\in G_1\}$ is a normal subgroup of $G_1$. Thus, $M_p(G\cap \hat{G}_1)\le M_p(G_1)$. Since $M_p(G)=M_p(G\cap\hat{G}_1)M_p(G/G\cap\hat{G}_1)$, the result now follows from an easy inductive argument. 
\end{proof}

\section{Upper bounds on $M_p(S)$ when $S$ is an alternating group}\label{sec:AltThmSec}
To prove Theorems \ref{thm:JordanAnalogue}, \ref{thm:JordanIrr}, \ref{thm:MainTheoremPrimLin} and \ref{thm:MainTheoremPerm}, we need to find lower bounds on $\Ord(G,p)/|G|$ for particular linear and permutation groups $G$. By Corollary \ref{cor:MpCor}, it will suffice to find upper bounds on $M_p(G)$ in these cases. Thus, with the definition of the function $M_p$ in mind, we will need to determine upper bounds on $M_p(S)$ for the finite nonabelian simple groups $S$. These upper bounds will come from Theorems \ref{thm:MainTheoremp} and \ref{thm:MainTheoremr} (both of which will be proved in Section \ref{sec:prproofs}) when $S$ is not an alternating group. The purpose of this section is to determine an upper bound on $M_p(S)$, in terms of $p$ and $n$, when $S$ is an alternating group of degree $n$. Our main result reads as follows.
\begin{thm}\label{thm:AltThm}
Fix a prime $p$, and let $S\cong\Alt_n$ be an alternating group of degree $n\geq p$. Let $\alpha$ be a $p$-element of $\Aut(S)$. Then the coset $S\alpha\subseteq \Aut(S)$ contains an element $x$ with $|C_S(x)|\le p^{n/2}$. That is, $M_p(S)\le p^{n/2}$. 
\end{thm}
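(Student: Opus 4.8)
The plan is to exhibit, for each $p$-element $\alpha$, an explicit $p$-element $x\in S\alpha$ whose cycle type on $\{1,\dots,n\}$ is as ``spread out'' as possible, and then to bound $|C_S(x)|\le |C_{\Sym_n}(x)|$ by a direct computation. First I would reduce to the case $\Aut(S)=\Sym_n$: this is standard for $n\ge 5$, $n\ne 6$, and the finitely many remaining degrees (small $n$, and $n=6$) are checked by hand. Since a $p$-element of $\Aut(\Alt_n)$ with $p$ odd is always inner (it maps trivially to the $2$-group $\Out(\Alt_n)$), the coset $S\alpha$ equals $\Alt_n$ whenever $p$ is odd; when $p=2$ the coset $S\alpha$ is either $\Alt_n$ or the set of odd permutations, according to whether $\alpha$ is even or odd.

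Write the base-$p$ expansion $n=\sum_{j\ge 0}c_j p^j$ with $0\le c_j\le p-1$. I would take $x$ to be a permutation with exactly $c_j$ cycles of length $p^j$ for each $j\ge 1$ and with $c_0$ fixed points; then $x$ has $p$-power order and
\[
|C_{\Sym_n}(x)|=\prod_{j\ge 1}(p^j)^{c_j}\prod_{j\ge 0}c_j!=p^{\sum_{j\ge 1}jc_j}\prod_{j\ge 0}c_j!.
\]
The heart of the argument is the inequality $\prod_{j\ge 1}(p^j)^{c_j}\prod_{j\ge 0}c_j!\le p^{n/2}$. For this I would first prove $c!\le p^{\,c-1}$ for $1\le c\le p-1$ (an easy induction, since $c+1\le p$), so that $\prod_j c_j!\le p^{\sum_j(c_j-1)_+}$, together with the elementary bound $j\le p^{\,j-1}$ for $j\ge 1$. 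Taking logarithms base $p$, it then suffices to establish
\[
\sum_{j\ge 1}jc_j+\sum_{j\ge 0}(c_j-1)_+\ \le\ \tfrac12\sum_{j\ge 0}c_j p^j .
\]
This holds termwise in $j$ for $j\ge 1$, and also for $j=0$ whenever $c_0\le 2$; when $c_0\ge 3$ (which forces $p\ge 5$) the small deficit at $j=0$, which is at most $(p-3)/2$, is compensated by the surplus at the lowest nonzero digit position $j_0\ge 1$ — such a position exists since $n\ge p>c_0$ — using the explicit estimates above. Then $|C_S(x)|\le|C_{\Sym_n}(x)|\le p^{n/2}$ gives the claim. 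For $p$ odd this finishes the proof, because each $p^j$-cycle ($j\ge 1$) is an even permutation, so automatically $x\in\Alt_n=S\alpha$.

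The remaining work, which I expect to be the main obstacle, is the parity constraint when $p=2$. Here the base-$2$ element $x_0$ has $c_j\in\{0,1\}$, hence $\prod_j c_j!=1$ and $|C_{\Sym_n}(x_0)|=2^{\sum_{j\ge 1}jc_j}\le 2^{(n-c_0)/2}\le 2^{n/2}$; but $x_0$ is even or odd according to the parity of its number of nontrivial cycles, which may fail to match the coset $S\alpha$. When $c_1=1$ one can swap a $2$-cycle for two fixed points, flipping the parity while leaving $|C_{\Sym_n}(x_0)|$ unchanged; otherwise one splits a $2^j$-cycle with $j\ge 2$ into two $2^{j-1}$-cycles, flipping the parity at the cost of a factor $2^{j-1}$, and checks that the slack in the inequality above — which is strict unless $n$ is a sum of the powers $2$ and $4$ — absorbs this, a short list of small degrees being verified directly. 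All of the estimates involved are routine digit computations; it is only this parity bookkeeping for $p=2$ that requires genuine care.
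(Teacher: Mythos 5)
Your construction is essentially the paper's: take $x$ with cycle type given by the base-$p$ expansion of $n$, bound $|C_{\Sym_n}(x)|$ by a digit inequality, and repair the parity when $p=2$. The odd-$p$ case and your inequality $p^{\sum_j jc_j}\prod_j c_j!\le p^{n/2}$ (including the compensation at the digit $j=0$) are fine. The genuine gap is in the $p=2$ parity correction, in the branch $c_1=1$. Your claim that replacing the $2$-cycle by two fixed points leaves $|C_{\Sym_n}(x_0)|$ unchanged is only true when $c_0=0$; if $c_0=1$ (i.e.\ $n\equiv 3\pmod 4$) the new element has three fixed points, the centralizer order is multiplied by $3$, and the resulting estimate just misses $2^{n/2}$. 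Worse, at the first such degree this is irreparable: for $n=7$ the $2$-elements in the odd coset of $\Sym_7$ have cycle types $[4,1^3]$, $[2^3,1]$, $[2,1^5]$, with $\Alt_7$-centralizers of orders $12$, $24$, $120$, all exceeding $2^{7/2}\approx 11.3$, so no choice of element attains the stated bound $M_2(\Alt_7)\le 2^{7/2}$; this boundary case has to be excluded or treated separately (for $n\equiv 3\pmod 4$ with $n\ge 11$ your swap can be salvaged, using the slack $2^{j-1}-j\ge 1$ coming from a binary digit $j\ge 3$ together with the extra factor $2$ gained by intersecting the centralizer with $\Alt_n$, but you do not argue this). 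I note that this is exactly the point where the paper's own proof is also unreliable: its criterion ``$x_2(n)\in\Alt_n$ iff $k(n)$ is even'' is reversed for odd $n$, and its substitute $x(n-1)$ has the same cycle type as $x_2(n)$, hence the same parity, so it does not move $x$ into the other coset either.

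A smaller point: in your ``otherwise'' branch (splitting a $2^{j}$-cycle, $j\ge 2$), the slack in the digit inequality alone does not absorb the factor $2^{j-1}$ at degrees such as $n=8,9$; there you also need the factor $2$ from $|C_{\Alt_n}(x)|=\tfrac12|C_{\Sym_n}(x)|$ (any nontrivial cycle of $x$ is an odd element of its centralizer), or the direct checks you allude to must be understood to cover these degrees. This is routine to fix; the substantive issue is the $n\equiv 3\pmod 4$ case above, and in particular $n=7$.
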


To prove Theorem \ref{thm:AltThm}, we will need two straightforward combinatorial lemmas.
\begin{lemma}\label{lem:basep}
Fix a prime $p$, and a positive integer $n\geq p$. Write the base $p$ expansion of $n$ in the form $n=\sum_{i=1}^{k}a_ip^{b_i}$, where $0<a_i\le p-1$ for each $i$, and $0\le b_1<\hdots<b_k\le \log_p{n}$. Then each of the following holds.
\begin{enumerate}[\upshape(i)]
    \item Suppose that either $p\geq 5$; or that $p=3$ and $b_i>1$ for some $i$; or that $p=2$ and $b_i>3$ for some $i$. Then $\sum_{i=1}^k(a_i+b_i)\le n/2$;
    \item If either $p=3$ and $b_i\le 1$ for all $i$; or $p=2$ and $b_i\le 3$ for all $i$, then $\prod_{i=1}^ka_i!p^{b_i}\le p^{n/2}$.
\end{enumerate}
It follows in particular that $\prod_{i=1}^ka_i!p^{b_i}\le p^{n/2}$.
\end{lemma}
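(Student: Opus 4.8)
The plan is to prove the two parts separately and then observe that the final sentence follows by combining them, since the hypotheses of (i) and (ii) together cover every prime $p\geq 2$ and every possible base-$p$ expansion: if $p\geq 5$ we are in case (i); if $p=3$ then either some $b_i>1$ (case (i)) or all $b_i\le 1$ (case (ii)); and if $p=2$ then either some $b_i>3$ (case (i)) or all $b_i\le 3$ (case (ii)). So once (i) and (ii) are established, note that in the situation of (i) we have $a_i!p^{b_i}\le p^{a_i+b_i}$ for each $i$ (since $a_i!\le p^{a_i}$, as $a_i\le p-1$ and $k!\le p^k$ for $k\le p-1$ — indeed $k!\le p^{k-1}$), hence $\prod_i a_i!p^{b_i}\le p^{\sum_i(a_i+b_i)}\le p^{n/2}$; and in the situation of (ii) the bound is immediate. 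This reduces everything to the two combinatorial estimates.

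For part (i), I would argue termwise. The idea is that $n=\sum_i a_ip^{b_i}$ while we want to bound $\sum_i(a_i+b_i)$ from above by $n/2=\tfrac12\sum_i a_ip^{b_i}$, so it suffices to show $a_i+b_i\le \tfrac12 a_ip^{b_i}$ for all $i$, \emph{except} that this termwise inequality can fail for the small terms (e.g. $p=2$, $b_i=0$: $1+0\le \tfrac12$ is false; $p=3$, $b_i=0$: $a_i\le \tfrac32 a_i$ is fine; $p=2$, $b_i\in\{1,2,3\}$ needs checking). So the clean approach is: first dispose of the terms with small $b_i$ by using the extra term guaranteed by hypothesis, and handle the remaining terms by the termwise bound. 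Concretely, for $p\geq 5$ one checks $a+b\le \tfrac12 a p^{b}$ directly for all $1\le a\le p-1$, $b\ge 0$ (the worst case is $b=0$, giving $a\le \tfrac12 ap$, true since $p\geq 5>2$), so the termwise inequality holds and summing gives the result with room to spare. For $p=3$: for $b\geq 1$, $a+b\le \tfrac12 a\cdot 3^b$ holds ($b=1$: $a+1\le \tfrac32 a\iff a\geq 2$, and $a=1$ gives $2\le \tfrac32$ false — so one must be slightly more careful), and for $b=0$, $a\le \tfrac32 a$ holds; the hypothesis supplies at least one term with $b_i>1$, which has slack $\tfrac12 a_i 3^{b_i}-(a_i+b_i)$ large enough to absorb the deficits $\le 1$ from the at most two bad terms ($b=1,a=1$ and the $b=0$ terms contribute no deficit). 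For $p=2$: here each $a_i=1$, so $n=\sum_i 2^{b_i}$ with distinct $b_i$, and we want $\sum_i(1+b_i)\le \tfrac12\sum_i 2^{b_i}$; for $b\geq 4$ the term $2^{b-1}$ dwarfs $1+b$ (indeed $1+b\le 2^{b-1}$ for $b\geq 3$ with equality nowhere past $b=3$... check: $b=3$: $4\le 4$ equality; $b=4$: $5\le 8$), and the hypothesis gives one term with $b_i\geq 4$ whose surplus covers the bounded deficits from $b_i\in\{0,1,2,3\}$ (deficits $1+0-0, 1+1-1, 1+2-2, 1+3-4 = 1,1,1,0$, total at most $3$, against a surplus of at least $2^{4-1}-(1+4)=3$). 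I would organise this as a short case analysis on $p\in\{2,3\}$ versus $p\geq 5$.

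For part (ii), the claim is a finite check: when $p=3$ and all $b_i\le 1$, the expansion has at most two terms, $n=a_0\cdot 1+a_1\cdot 3$ with $a_0,a_1\in\{1,2\}$ (and not both absent), and we must verify $a_0!\,3^{0}\cdot a_1!\,3^{1}\le 3^{n/2}$ in each of the (at most eight) cases — e.g. $n=3$: $1!\cdot 1!\cdot 3=3\le 3^{3/2}\approx 5.2$; $n=8$: $2!\cdot 2!\cdot 3=12\le 3^4=81$; etc. When $p=2$ and all $b_i\le 3$, each $a_i=1$ and $n=\sum_{i\in T}2^{b_i}$ for some nonempty $T\subseteq\{0,1,2,3\}$, so $n\le 15$ and $\prod_i a_i!\,2^{b_i}=2^{\sum_{i\in T}b_i}$; one checks $\sum_{i\in T}b_i\le n/2$ for each of the fifteen subsets (the tightest being $T=\{3\}$: $3\le 4$, and $T=\{2,3\}$: $5\le 6$), which is routine. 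I would present these as small tables rather than prose.

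The main obstacle is not conceptual but bookkeeping: the termwise inequality $a+b\le \tfrac12 ap^b$ is false for precisely the handful of small $(p,a,b)$ that the hypotheses of (i) are designed to exclude or compensate for, so the delicate point is to verify that the \emph{surplus} contributed by the one guaranteed ``large'' term (with $b_i>1$ when $p=3$, or $b_i>3$ when $p=2$) genuinely dominates the total \emph{deficit} from all the small terms — and to do this one needs the (trivial but essential) fact that each $a_i\le p-1$, so there is at most one term for each value of $b_i$ and the deficits are individually bounded by a small constant. I would make the surplus-versus-deficit accounting fully explicit for $p=2$ and $p=3$, since that is where the inequality is tight.
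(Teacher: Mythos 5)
Your reduction of the final sentence to (i) and (ii), and your finite checks for (ii), are fine and essentially the paper's own argument (the paper likewise notes $a_i!p^{b_i}\le p^{a_i+b_i}$ and reduces (ii) to $n\le 8$, resp.\ $n\le 15$). The genuine gap is in your treatment of (i), at the term with $b_i=0$. Your termwise target $a+b\le\tfrac12 ap^{b}$ reads $a\le\tfrac12 a$ when $b=0$ (you evaluated $p^{0}$ as $p$: the worst case is not ``$a\le\tfrac12 ap$''), which is false for every $a\ge 1$; the same slip makes you assert in the $p=3$ case that the $b=0$ term carries no deficit, when it carries deficit $a/2$. For $p\ge 5$ this is not a repairable bookkeeping issue for (i) as stated: with $n=(p-1)+p$ one gets $\sum_i(a_i+b_i)=p+1>n/2=p-\tfrac12$, and already $p=5$, $n=7=2+5$ gives $4>3.5$. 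So the inequality you set out to prove termwise for $p\ge 5$ is false whenever the units digit is large, and the case you treat as the easy one is exactly where the argument breaks. (For comparison: the paper proves (i) globally via $g_p(b)=p^{b}-2-2b$, but its chain $2\sum_i b_i\le\sum_i(p^{b_i}-2)\le\sum_i a_i(p^{b_i}-2)$ strains at the same spot --- the second inequality reads $2\le -1$ for $p=5$, $n=9$ --- so the statement of (i) itself needs a caveat for $p\ge5$.)

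What is true, and all that is used later in Theorem \ref{thm:AltThm}, is the final bound $\prod_i a_i!p^{b_i}\le p^{n/2}$, and your parenthetical remark $a!\le p^{a-1}$ is precisely what rescues it for $p\ge 5$ without (i): it suffices to prove $\sum_i(a_i+b_i-1)\le n/2$, i.e.\ $\sum_i\bigl(2a_i+2b_i-2-a_ip^{b_i}\bigr)\le 0$. The summand equals $a_i-2\le p-3$ at $b_i=0$ and is at most $2-p$ for every $b_i\ge 1$ (immediate at $b_i=1$; larger $b_i$ only helps); since $n\ge p$ forces some $b_i\ge 1$ and at most one $b_i$ equals $0$, the total is at most $(p-3)+(2-p)<0$. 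For $p=3$ your surplus-versus-deficit scheme does close, but only with the corrected ledger: deficits $a/2\le 1$ at $b=0$ and $1-a/2\le\tfrac12$ at $b=1$, against a surplus of at least $\tfrac12 a3^{b}-(a+b)\ge\tfrac32$ from the guaranteed term with $b\ge 2$ (equality occurs at $n=14$, so there is no room to spare); for $p=2$ your accounting is correct once the $b=0$ deficit is taken as $\tfrac12$ rather than $1$. In short: keep your (ii) and your $p\in\{2,3\}$ analysis (with the corrected deficits), but for $p\ge 5$ replace the termwise claim by the $\sum_i(a_i+b_i-1)\le n/2$ version, and note that (i) as literally stated in Lemma \ref{lem:basep} cannot be proved because it is false for, e.g., $p=5$, $n=7$.
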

\begin{proof}
We first prove (i). So assume that either $p\geq 5$; or that $p=3$ and $b_i>1$ for some $i$; or that $p=2$ and $b_i>3$ for some $i$. Set $g_p(b_i):=p^{b_i}-2-2b_i$. Then $g_p(0)=-1$; $g_p(1)=p-4$; $g_p(2)=p^2-6$; $g_p(3)=p^3-8$; and $g_p(4)=p^4-10$. Moreover, $g_p(b_i)\geq g_p(b_i-1)>0$ for $b_i\geq 5$. Note that $b_i\geq 1$ for some $i$, since $n\geq p$. It follows that $\sum_{i=1}^kg_p(b_i)\geq 0$ if either $p\geq 5$; or $p=3$ and $b_i>1$ for some $i$; or $p=2$ and $b_i>3$ for some $i$. Since we are assuming that these conditions hold, we deduce that $2\sum_{i=1}^kb_i\le \sum_{i=1}^k(p^{b_i}-2)\le \sum_{i=1}^ka_i(p^{b_i}-2)$, and hence that $\sum_{i=1}^k(a_i+b_i)\le \frac{1}{2}\sum_{i=1}^ka_ip^{b_i}=n/2$. This proves (i).

For (ii), notice that if $p=3$ and $b_i\le 1$ for all $i$, then $n$ has the form $n=x+3y$, where $0\le x\le 2$, and $y\in \{1,2\}$. Hence, $n$ is at most $8$. Similarly, if $p=2$ and $b_i\le 3$ for all $i$, then $n\le 15$. We can then quickly check the result for each $n\geq p$ in each case. Finally, since $\prod_{i=1}^ka_i!p^{b_i}\le \prod_{i=1}^kp^{a_i+b_i}$, the proof of the lemma is complete. 
\end{proof}

\begin{lemma}\label{lem:paritybase}
For a positive integer $n\geq 2$, write the base $2$ expansion of $n$ in the form $n=\sum_{i=1}^{k(n)}2^{b_i}$, where $0\le b_1<\hdots<b_k\le \log_2{n}$. Let $m$ be minimal so that $k(n)$ and $k(n-m)$ have different parities. Then $m=1$ if $n$ is odd, while $m=2$ if $n$ is even.
\end{lemma}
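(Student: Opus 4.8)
The plan is to control how the binary digit-sum $k$ changes when $n$ is decreased by a small amount, using the elementary description of subtraction in base $2$ (equivalently, Legendre's formula $v_2(n!) = n - k(n)$). Write $v$ for the $2$-adic valuation of $n$, so the binary expansion of $n$ ends in a $1$ followed by exactly $v$ zeros, and $v = 0$ precisely when $n$ is odd. Subtracting $1$ turns this trailing block --- a $1$ followed by $v$ zeros --- into a $0$ followed by $v$ ones, and leaves all higher digits alone, so that $k(n-1) = k(n) - 1 + v$. In particular, if $n$ is odd then $v = 0$ and $k(n-1) = k(n) - 1$ has parity opposite to that of $k(n)$; since $m$ is required to be positive, this forces $m = 1$, which is the odd case of the statement.

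For the even case, $v \ge 1$, so $k(n-1) - k(n) = v - 1$. If $v$ is odd (in particular if $n \equiv 2 \pmod 4$), then $k(n-1) \equiv k(n)$, so $m \ge 2$; I would then apply the same trailing-block rule a second time to $n - 2$, splitting into the sub-cases $v = 1$ (where $n - 2 = 2(n/2 - 1)$ with $n/2 - 1$ even, giving $k(n-2) = k(n) - 1$) and $v \ge 3$ (where $k(n-2) = k(n) + v - 2$), and conclude in both that $k(n-2)$ has parity opposite to $k(n)$, so $m = 2$. If instead $v$ is even, then $k(n-1) - k(n) = v - 1$ is already odd, so in fact $m = 1$; thus the sharp statement in the even case is $m \le 2$, with equality exactly when $v$ is odd.

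I do not expect any genuine obstacle here: the whole argument is just careful bookkeeping of the borrows in the last two or three binary digits of $n$. The only point needing attention is the mild imprecision just noted --- for some even $n$ (e.g. $n = 4$) one actually has $m = 1$ rather than $m = 2$ --- but since only the upper bound $m \le 2$ for even $n$ is used in the proof of Theorem \ref{thm:AltThm}, replacing the stated exact value by this bound costs nothing.
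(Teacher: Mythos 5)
Your argument is correct, and in fact it is sharper than the paper's: your bookkeeping via $v:=v_2(n)$ (the rule $k(n-1)=k(n)-1+v$, and the two-step analysis for $n-2$) exposes a genuine error both in the statement of Lemma \ref{lem:paritybase} and in the paper's proof of it. The paper argues ``assume that $n$ is even, so that $b_1=1$, whence $k(n-1)=k(n)$'', but $b_1=1$ only holds when $n\equiv 2\pmod 4$; when $4\mid n$ one has $k(n-1)=k(n)-1+v$, which has the opposite parity to $k(n)$ whenever $v$ is even. Thus for $n=4,12,16,\hdots$ the minimal $m$ is $1$, not $2$, exactly as you observe, and the correct statement is the one you prove: $m=1$ if $n$ is odd or $v_2(n)$ is even, and $m=2$ if $v_2(n)$ is odd; in particular $m\le 2$ always. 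Your closing remark is essentially right but slightly optimistic as phrased: the proof of Theorem \ref{thm:AltThm} as written does not use only the bound $m\le 2$ --- it hard-codes $y:=x(n-2)$ for every even $n$ and asserts the exact relation $|C_{\Sym_n}(x(n-2))|=2|C_{\Sym_n}(x(n))|$, both of which lean on the (incorrect) exact dichotomy. With your corrected lemma one should instead take $y:=x(n-m)$ with $m\le 2$ and bound $|C_{\Sym_n}(x(n-m))|\le 2\,|C_{\Sym_{n-m}}(x(n-m))|\le 2\cdot p^{(n-m)/2}$ via Lemma \ref{lem:basep}, recovering $|C_S(y)|\le p^{n/2}$ after passing to $\Alt_n$; this is a minor repair, but it is a repair of the theorem's proof, not a free substitution. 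So: correct proof, same elementary base-$2$ borrow idea as the paper, but carried out carefully enough to catch and fix a real mistake.
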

\begin{proof}
It is clear that if $n$ is odd (so that $b_1=0$), then $k(n-1)=k(n)-1$, whence $k(n)$ and $k(n-1)$ have different parities. So assume that $n$ is even, so that $b_1=1$. Then $k(n-1)=k(n)$, so $k(n-1)$ and $k(n)$ have the same parities. But the argument above implies that $k(n-1)$ and $k(n-2)$ have different parities, since $n-1$ is odd. Thus, $k(n)$ and $k(n-2)$ have different parities, and the proof is complete. 
\end{proof}

We can now prove Theorem \ref{thm:AltThm}.
\begin{proof}[Proof of Theorem \ref{thm:AltThm}]
Fix a prime $p$, and a $p$-element $\alpha$ of $\Aut(S)$, where $S=\Alt_n$ and $n\geq \max\{5,p\}$). To prove Theorem \ref{thm:AltThm}, it will suffice to prove that there is a $p$-element $x$ in the coset $S\alpha$ with the property that $|C_S(x)|\le p^{n/2}$.
If $p=2$ and $n=6$, then this can be checked using the Web Atlas \cite{WebAtlas}, so we will assume throughout that we are not in this case.

Write the base $p$ expansion of $n$ as $n=\sum_{i=1}^{k}a_ip^{b_i}$, where $0<a_i\le p-1$ for each $i$, and $0\le b_1<\hdots<b_k\le \log_p{n}$. Let $x:=x_p(n)$ be any corresponding element of $\Sym_n$. That is, $x$ is a permutation with $a_i$ cycles of length $p^{b_i}$, for each $1\le i\le k$. 
Then $|C_{\Sym_n}(x)|=\prod_{i=1}^ka_i!p^{b_i}$. It follows from Lemma \ref{lem:basep} that $|C_S(x)|\le |C_{\Sym_n}(x)|\le p^{n/2}$. 


Since $\Aut(S)/S$ is a $2$-group, this proves the proposition in all cases apart from when $p=2$ and $x:=x_2(n)\not\in S\alpha$. So assume that we are in this case, and write $k=k(n)$, so that $n=\sum_{i=1}^{k(n)}2^{b_i}$ is the base $2$ expansion of $n$. Since we are assuming that $n\neq 6$, the element $\alpha$ is contained in $\Sym_n$. Notice also that $x=x_2(n)$ is in $\Alt_n$ if and only if $k(n)$ is even. Choose $m\le n$ minimal with the property that $k(n-m)$ and $k(n)$ have different parities. Then by Lemma \ref{lem:paritybase}, either $n$ is odd and $m=1$, or $n$ is even and $m=2$. In the former case, it is clear that $x(n-1)$ has the same $\Sym_n$-centralizer as $x(n)$. In the latter case, we have $|C_{\Sym_n}(x(n-2))|=2|C_{\Sym_n}(x(n))|$, and $C_{\Sym_n}(x)$ contains a transposition. Setting $y:=x(n-1)$ if $n$ is odd and $y:=x(n-2)$ if $n$ is even, it follows that $y\in S\alpha$ and $|C_S(y)|\le |C_{\Sym_n}(x(n))|\le p^{n/2}$, with the latter inequality following from Lemma \ref{lem:basep} as in the previous paragraph. This completes the proof.
\end{proof}

\section{The proof of Theorem \ref{thm:MainTheoremp}}\label{sec:prproofs}
Having determined upper bounds for $M_p(S)$ when $S$ is an alternating group, our attention now turns to the case where $S$ is a finite group of Lie type in defining characteristic. Our upper bounds for $M_p(S)$ in this case comes from Theorem \ref{thm:MainTheoremp}, and the purpose of the current section is to prove this theorem. We can do so without further discussion. 
\begin{proof}[Proof of Theorem \ref{thm:MainTheoremp}]
Let $S$ be a finite simple group in $\mathrm{Lie}(p)$, and write $q$ and $\ell$ for the level and untwisted Lie rank of $S$, respectively. Fix a $p$-element $\alpha$ of $\Aut(S)$. We need to prove that there exists a $p$-element in the coset $S\alpha\subseteq \Aut(S)$ with $S$-centralizer order less than or equal to $q^{\ell}$.

We begin by fixing some Lie theoretic notation. Write $S=O^{p'}(\ol{X}_{\ol{\sigma}})$, where $\ol{X}$ is a simple algebraic group, $\ol{\sigma}:\ol{X}\rightarrow \ol{X}$ is a Steinberg endomorphism of $\ol{X}$, and $\ol{X}_{\ol{\sigma}}=\{x\in \ol{X}\text{ : }\ol{\sigma}(x)=x\}$. In particular, by \cite[Theorem 2.2.3]{GLS}, we may write $\ol{\sigma}=\ol{\sigma_p}^f\ol{\gamma}^i$, where $\ol{\sigma_p}$ is a Frobenius automorphism of $\ol{X}$, $q=p^f$, and either 
\begin{enumerate}[(a)]
    \item $\ol{X}\in\{A_{\ell},D_{\ell},E_6\}$ and $\ol{\gamma}$ is a graph automorphism of $\ol{X}$ as in \cite[Theorem 1.15.2(a)]{GLS};
    \item $\ol{X}\in\{B_2\text{ }(p=2),G_2\text{ }(p=3)\text{, }F_4\text{ }(p=2)\}\}$, and $\ol{\gamma}$ is as in \cite[Theorem 1.15.4(b)]{GLS}; or
    \item $\ol{X}$ is not one of the groups listed in (a) or (b) above, and $\ol{\gamma}=1$.
\end{enumerate}
Moreover, we may write $\alpha=x\sigma_p^e\gamma^j$, where $x\in \Inndiag(X):=\ol{X}_{\ol{\sigma}}$; $\sigma_p:=\ol{\sigma}_p\downarrow_S$; $0\le e<f$; and $\gamma:=\ol{\gamma}\downarrow_S$ (see \cite[Section 2.5]{GLS}). In fact, since $\alpha$ is a $p$-element of $\Aut(S)$, and since we are working in the coset $S\alpha$, we may assume that $\alpha=\sigma_p^e\gamma^j$.   

Now, as we will see later in the proof, the result will follow almost immediately, via the theory of Shintani descent, from our analysis in the cases (1) $\alpha=1$; and (2) $S\in\{A_{\ell}(q),D_{\ell}(q),E_6(q)\}$ and $\alpha$ is a graph automorphism of $S$. With this in mind, assume first that $\alpha=1$. We will show that there exists a unipotent element $x\in S$ such that $|C_S(x)|\le |C_{\ol{X}_{\ol{\sigma}}}(x)|\le 2q^{\ell}$. We will prove that this is the case when $x$ is chosen to be a regular unipotent element of $S$. Such an element $x$ exists in $S$ by \cite[Proposition 5.1.7(a)]{Carter}. Moreover, at least one such $x$ satisfies $|C_S(x)|\le|C_{\ol{X}_{\ol{\sigma}}}(x)|\le q^{\ell}|\mathcal{C}_{reg,p}|$ by \cite[Proposition 5.1.9]{Carter}, where $|\mathcal{C}_{reg,p}|$ is the number of conjugacy classes of regular unipotent elements in $\ol{X}_{\ol{\sigma}}$. If the prime $p$ is \emph{good} for $\ol{X}$, we have $|\mathcal{C}_{reg,p}|=1$ by \cite[Proposition 5.1.7(b)]{Carter} (see \cite[page 28]{Carter} for a list of the good and bad primes for each $\ol{X}$).   
Thus, we may assume that $p$ is bad for $\ol{X}$. We distinguish the classical and exceptional cases:
\begin{enumerate}
    \item $S$ is classical. The only bad prime here is $p=2$, and it only occurs in the cases $S\in\{\PSp_{2\ell}(q),\POmega^{\epsilon}_{2\ell}(q)\text{ : }\epsilon\in\{\pm\}\}$. So assume we are in one of these cases, and let $\lambda$ be an element of $\mathbb{F}_q$ such that the polynomial $x^2+x+\lambda$ is irreducible over $\mathbb{F}_q$. Write $\hat{S}$ for the quasisimple matrix group associated to $S$, so that $\hat{S}/Z(\hat{S})\cong S$. Then in the notation of \cite[Theorem 3.1 and its proof]{GLOB}, and by using \cite[Section 6.1]{LS}, we see that the following holds:
    \begin{enumerate}[(i)]
        \item $V_{\lambda}(2\ell)$ reduces modulo $Z(\hat{S})$ to a regular unipotent element of $S$ if $\hat{S}=\Sp_{2\ell}(q)$ (acting as a single Jordan block on the natural module for $\hat{S}$);
        \item $V(2\ell-2)+V(2)$ reduces modulo $Z(\hat{S})$ to a regular unipotent element of $S$ if $\hat{S}=\Omega^+_{2\ell}(q)$; and
        \item $V_{\lambda}(2\ell-2)+V(2)$ reduces modulo $Z(\hat{S})$ to a regular unipotent element of $S$ if $\hat{S}=\Omega^-_{2\ell}(q)$.
    \end{enumerate}
   It then follows from \cite[Proposition 3.2]{GLOB} $|\mathcal{C}_{reg,p}|\le 2$. Thus, $|C_S(x)|\le |C_{\ol{X}_{\ol{\sigma}}}(x)|\le 2q^{\ell}$, as claimed.
   
   \item $S$ is exceptional. The finite exceptional groups arising from bad primes for algebraic groups are precisely those given in Table \ref{tab:badprimes}. Upper bounds for the order of the centralizer of a regular unipotent element of $S$ are known in these cases. We give these upper bounds, together with the associated references, in Table \ref{tab:badprimes}. We also make two further remarks. First, suppose that $S\not\in\{E^{\pm}_6(q),E_7(q),E_8(q)\}$. To identify the regular unipotent elements in $S$ in each of the given references, we use \cite[Proposition 5.1.3]{Carter} (which gives a regular semisimple element as a word in the Chevalley generators for $\ol{X}$) and  \cite[Proposition 5.1.9]{Carter} (which tells us how many regular semisimple elements there are in $\ol{X}_{\ol{\sigma}}$). Note also that $S=\ol{X}_{\ol{\sigma}}$ in these cases. Our second remark concerns the case $S\in\{E^{\pm}_6(q),E_7(q),E_8(q)\}$. In these cases Mizuno \cite{MizunoE6,Mizuno} does not explicitly give the order of the centralizer of a regular semisimple element $x$: he simply gives $|C_{\ol{X}}(x)/|C_{\ol{X}}(x)^{\circ}|$ (written as $Z(x)$ in Mizuno's tables). By \cite[Proposition 5.1.2]{Carter}, the regular unipotent elements of $\ol{X}$ form a single conjugacy class in $\ol{X}$. Then, by \cite[Theorem 2.1.5]{GLS}, the number $|\mathcal{C}_{reg,p}|$ of regular unipotent classes in $\ol{X}_{\ol{\sigma}}$ is at most $|C_{\ol{X}}(x)/|C_{\ol{X}}(x)^{\circ}|$. Thus, as mentioned above, we have $|C_S(x)|\le |C_{\ol{X}_{\ol{\sigma}}}(x)|\le |C_{\ol{X}}(x)/|C_{\ol{X}}(x)^{\circ}|q^{\ell}$. For example, using \cite[Theorem 6.2]{MizunoE6}, we get $|C_{\ol{X}}(x)/|C_{\ol{X}}(x)^{\circ}|\le 6$. Thus, $C_S(x)\le 6q^6$. This is the upper bound given in Table \ref{tab:badprimes}.

   \begin{table}[]
       \centering
       \begin{tabular}{c|c|c|c}
        $S$    & $p$ &  $|C_{\ol{X}_{\ol{\sigma}}}(x)|\le $ & Reference\\
        \hline
        ${}^2B_2(q)$    & $2$  &  $2q$ & \cite[Propositions 1 and 18, and Theorem 6]{SuzukiSu1}\\
        ${}^2G_2(q)$    & $3$  & $3q$  & \cite[Chapter III, paragraph 11]{Ward}\\
        ${}^3D_4(q)$    & $2$  & $2q^4$ & \cite[0.5, page 677]{Spaltenstein}\\
        $G_2(q)$        &  $2$ & $2q^2$  & \cite[Table IV--1]{EnYa}\\
                        &  $3$ & $3q^2$  & \cite[Table VII--1]{En}\\
        ${}^2F_4(q)$    & $2$  & $4q^4$  & \cite[Table II]{Shinoda75}\\
        $F_4(q)$        &  $2$ & $4q^4$  & \cite[Theorem 2.1]{Shinoda74}\\
                        &  $3$ & $3q^4$  & \cite[Table 6]{Shoji}\\
        $E^{\pm}_6(q)$        &  $2,3$ & $6q^6$  & \cite[Theorem 6.2]{MizunoE6}\\
        $E_7(q)$        &  $2$ &  $4q^7$ & \cite[Table 9]{Mizuno}\\
                        &  $3$ &  $6q^7$ & \cite[Table 9]{Mizuno}\\
        $E_8(q)$        &  $2$ &   $4q^8$ & \cite[Table 10]{Mizuno}\\
                        &  $3$ &   $3q^8$ & \cite[Table 10]{Mizuno} \\ 
                        &  $5$ &   $5q^8$ & \cite[Table 10]{Mizuno}\\ 
                        \hline
       \end{tabular}
       \caption{Upper bounds for centralizer orders of regular unipotent elements of simple exceptional groups in bad characteristic.}
       \label{tab:badprimes}
   \end{table}
   
\end{enumerate}
This completes the proof in the case $S\alpha=S$.

Next, assume that $S\in\{A_{\ell}(q),D_{\ell}(q),E_6(q)\}$ and $\alpha$ is a graph automorphism of $S$. We then have either $p=2$ and $S\in\{A_{\ell}(q),D_{\ell}(q),E_6(q)\}$; or $p=3$ and $S=D_4(q)$. In these cases, the $S$-conjugacy classes of unipotent elements in $S\alpha$ and their centralizer orders are given in \cite{LLS}. The details of what we need are as follows. First, if $S=A_{\ell}(q)$ with $p=2$, then \cite[Theorem 1.3]{LLS} implies that $S\alpha$ contains a $2$-element $x$ with $|C_S(x)|\le 2q^{\ell/2}$ if $\ell$ is even, and  $|C_S(x)|\le 4q^{(\ell+1)/2}$ if $\ell$ is odd (setting $n=\ell+1$, and in the notation of \cite[(1)]{LLS}, we take $x$ corresponding to the decompositions $V\downarrow_x=V(2n)$ and $V\downarrow_x=V(2)+V(2n-2)$, respectively). Since $\ell$ is (necessarily) at least $2$, this gives us what we need unless $q=\ell=2$. However, in this case, we have $S\cong \Lg_3(2)$ and there is an element of order $8$ in $S\alpha$ with $|C_S(\alpha)|=|C_{\ol{X}_{\ol{\sigma}}}(x)|=4=q^{\ell}$.  

Suppose next that $S=D_{2\ell}(q)$ with $p=2$. Let $\hat{S}=\Omega^{+}_{2\ell}(q)$ be the associated quasisimple matrix group. We may then choose an element $\hat{\alpha}$ of $\mathrm{O}_{2\ell}^{+}(q)\setminus\Omega^{+}_{2\ell}(q)$ so that $\hat{S}\langle\hat{\alpha}\rangle$ reduces modulo the scalar subgroup of $\mathrm{O}_{2\ell}^{+}(q)$ to the group $S\langle\alpha\rangle$. Now, \cite[Lemma 3.3]{GLOB} implies that $\hat{S}\hat{\alpha}\subseteq \mathrm{O}_{2\ell}^{+}(q)\subseteq \mathrm{Sp}_{2\ell}(q)$ contains a unipotent element $x$ ($x=V(2k)$ in the notation of \cite[Lemma 3.3]{GLOB}) which acts as a single Jordan block on the natural module for $\mathrm{Sp}_{2\ell}(q)$. As in case (1) of the $S\alpha=S$ case above, we can then use \cite[Proposition 3.2]{GLOB} to conclude that $|C_S(x)|=|C_{\ol{X}_{\ol{\sigma}}}(x)|\le |C_{\Sp_{2\ell}(q)}(x)|\le 2q^{\ell}$. This gives us what we need. 

Finally, if $(S,p)=(D_4(q),3)$ or $(S,p)=(E_6(q),2)$, then $S\alpha$ contains a $p$-element $x$ with $|C_{\ol{X}_{\ol{\sigma}}}(x)|= q^2$ or $|C_{\ol{X}_{\ol{\sigma}}}(x)|= 2q^4$, respectively (see \cite[Tables 9 and 10]{LLS}). 

Thus, all that remains is to prove the theorem when $\alpha$ is non-trivial, and when we are not in the case where $S\in\{A_{\ell}(q),D_{\ell}(q),E_6(q)\}$ and $\alpha$ is a graph automorphism of $S$. Then $\alpha$ is the restriction to $S=O^{p'}(\ol{X}_{\ol{\sigma}})$ of a Steinberg endomorphism $\ol{\alpha}$ of $\ol{X}$. In these cases, the theory of Shintani descent (see for example, \cite{Harper}) gives us what we need. Indeed, by \cite[Theorem 2.1 and Lemma 2.16]{Harper}, the coset $S\alpha$ contains a unipotent element $x$ with the property that $|C_{\ol{X}_{\ol{\sigma}}}(x)|=|C_{\ol{X}_{\ol{\sigma_1}}}(x_1)|$, where
\begin{itemize}
    \item $\ol{\sigma_1}$ is a Steinberg endomorphism of $\ol{X}$ of the form $\ol{\sigma_1}=\bar{\sigma_p}^{f_1}\bar{\gamma}^{i_1}$, with $f_1\le f$ and $[\ol{\sigma},\ol{\sigma_1}]=1$; and
    \item $x_1$ is a $p$-element contained in $S_1\alpha_1$, where $S_1:=O^{p'}(\ol{X}_{\ol{\sigma_1}})$, and $\alpha_1$ is either trivial; or $S\in\{A_{\ell}(q),D_{\ell}(q),E_6(q)\}$ and $\alpha_1$ is a graph automorphism of $S_1$ (see (a) in the first paragraph of the proof). 
\end{itemize}
The result therefore follows from our work in the preceding paragraphs.
\end{proof}

\section{Results on primitive linear groups}\label{sec:primsec}
In this section, we make a short but necessary detour to discuss primitive linear groups. Recall that an irreducible linear group $G\le \mathrm{GL}(V)$ is said to be \emph{imprimitive} if the natural $G$-module $V$ has a direct sum decomposition $V=V_1\oplus\hdots\oplus V_t$ with $t>1$, such that for each $1\le i\le t$, and each $x\in G$, either $V_i^x=V_i$ or $V_i^x\cap V_i=0$. If no such decomposition exists, then the irreducible group $G\le \mathrm{GL}(V)$ is said to be \emph{primitive}.

We also need to define two linear group properties which are weaker than primitivity. First, recall that a linear group $N\le\GL(V)$ is said to be \emph{homogeneous} if $V\downarrow_N$ is completely reducible with pairwise isomorphic irreducible constituents. A linear group $G\le \GL(V)$ with the property that every normal [respectively characteristic] subgroup $N$ of $G$ is \emph{homogeneous} is called \emph{quasiprimitive} [resp. weakly quasiprimitive]. Since the homogenous components of a normal subgroup of an irreducible linear group $G\le \GL(V)$ form an imprimitivity decomposition for $V$, a primitive linear group is both quasiprimitive and weakly quasiprimitive. 

The following series of results give details on the structure of a weakly quasiprimitive linear group.
\begin{lemma}\label{lem:qlin}
Let $q$ be a power of a prime $p$, and let $G\le \GL(V)=\GL_n(q)$ be weakly quasiprimitive. Then there exists a divisor $d$ of $n$, and a subnormal series $G_a\unlhd G_{a-1}\unlhd\hdots\unlhd G_0=:G$, such that each of the following holds:
\begin{enumerate}[\upshape(i)]
    \item $G_i/G_{i+1}$ is abelian, for each $1\le i\le a-1$;
    \item $\left(\prod_{i=1}^{a-1}|G_i/G_{i+1}|\right)$ divides $d$.
    \item $G_a$ is isomorphic to a subgroup of $\GL_{n/d}(q^d)$ satisfying each of the following properties:
    \begin{enumerate}[\upshape(a)]
        \item The generalized Fitting subgroup $F^*(G_a)$ of $G_a$ is a homogeneous subgroup of $\GL_{n/d}(q^d)$.
        \item Let $\mathcal{R}$ be the set of primes $r$ such that $O_r(G_a)\not\le Z(G_a)$, and let $\mathcal{S}$ be a set of representatives for the conjugacy classes of subnormal quasisimple subgroups of $G_a$. Also, for $S\in\mathcal{S}$, set $T(S):=\langle S^g\text{ : }g\in G_a\rangle$. Then $F^*(G_a)$ is a central product of $Z(G_a)$, the $O_{r}(G_a)$ for $r\in \mathcal{R}$, and the $T(S)$ for $S\in\mathcal{S}$. Each $O_{r}(G)$ is extraspecial of exponent $r(2,r)$. Furthermore, each $T(S)$ is a central product of $t(S)\geq 1$ copies of $S$.
        \item $Z(G_a)\le Z(\GL_{n/d}(q^d))$.
        \item Let $V_0$ be the natural $\GL_{n/d}(q^d)$-module. If $U$ is an irreducible constituent of $V_0\downarrow_{F^*(G_a)}$, then $U$ is a tensor product of a $1$-dimensional module for $Z(G_a)$, irreducible modules $U(r)$ for each $O_{r}(G)$ with $r\in\mathcal{R}$, and irreducible modules $U(T)$ for each $T=T(S)\in\mathcal{S}$. Moreover, each $U(r)$ is an absolutely irreducible $\mathbb{F}_{q^d}[O_{r}(G_a)]$-module; while each $U(T)$ is a tensor product of $t:=t(S)$ copies of an absolutely irreducible $\mathbb{F}_{q^d}[S]$-module $U(S)$.
    \end{enumerate}
\end{enumerate}
\end{lemma}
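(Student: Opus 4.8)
\emph{Strategy.} The plan is to combine the general structure theory of the generalized Fitting subgroup with Clifford theory and a bookkeeping of ``fields of definition'', and to build the subnormal series by peeling off one cyclic Galois quotient per field extension. Throughout one must be careful to use only the homogeneity of \emph{characteristic} (equivalently, of normal-in-$G$) subgroups, since weak quasiprimitivity is all that is assumed.

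\emph{Reduction to the irreducible case, and the structure of $F^*$.} Since $G$ is characteristic in itself, weak quasiprimitivity gives that $V\downarrow_G$ is homogeneous, say $V\downarrow_G\cong W^{\oplus k}$ for an irreducible $\mathbb{F}_q G$-submodule $W$; as $G$ acts faithfully and irreducibly on $W$ and every characteristic subgroup of $G$ is again homogeneous on $W$, we may replace $V$ by $W$ (absorbing the factor $k$ into the eventual divisor $d$) and assume $V$ is irreducible. For the structure statement (iii)(b), one first recalls the general fact that $F^*(H)=Z(F^*(H))F(H)E(H)$, with $F(H)=\prod_r O_r(H)$ and $E(H)$ the central product of the components of $H$, which $H$ permutes by conjugation; writing $T(S):=\langle S^H\rangle$ for a component $S$, this is the central product of the distinct $H$-conjugates of $S$. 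Taking $H=G$, each $O_r(G)$ is characteristic, hence homogeneous; since an abelian normal subgroup of a homogeneous linear group is cyclic, a standard argument examining the centre and the derived subgroup of $O_r(G)$ and its exponent shows that a non-central $O_r(G)$ is extraspecial of exponent $r(2,r)$, once any excess cyclic central part has been pushed into the central factor. This is (iii)(b).

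\emph{The divisor $d$, the group $G_a$, and the series.} Put $R:=F^*(G)$. By weak quasiprimitivity $V\downarrow_R$ is homogeneous, say $V\downarrow_R\cong U^{\oplus m}$ with $U$ an irreducible $\mathbb{F}_q R$-module, and set $L=\mathbb{F}_{q^{e}}:=\End_{\mathbb{F}_q R}(U)$; then $e$ divides $\dim_{\mathbb{F}_q}U$, hence $n$. The scalar copy $L\cdot I_m\le\End_{\mathbb{F}_q R}(V)$ makes $V$ an $L$-space of dimension $n/e$ on which $R$ acts $L$-linearly, so $R\le\GL_{n/e}(q^e)$. Now $G$ normalizes $R$, hence acts by ring automorphisms on $\End_{\mathbb{F}_q R}(V)$, hence on its centre $L$ (which it fixes setwise, the isotype $U$ being unique); let $G_1$ be the kernel of this action. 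Then $G_1\le\GL_{n/e}(q^e)$, $G_1\trianglelefteq G$, and $G/G_1$ embeds in $\mathrm{Gal}(L/\mathbb{F}_q)$, which is cyclic of order dividing $e$. If $G_1$ is absolutely irreducible over $L$ and $V\downarrow_{F^*(G_1)}$ is homogeneous with absolutely irreducible constituents over $L$, we stop and set $G_a:=G_1$, $d:=e$. Otherwise we repeat with $G_1$ over $L$ in place of $G$ over $\mathbb{F}_q$; since the $\mathbb{F}_q$-dimension over the current field strictly drops at each step (in the subordinate case where only absolute irreducibility fails, one passes to $\End_{L G_1}(V)\supsetneq L$, which again strictly enlarges the field), the process terminates, producing a chain $G=G_0\unrhd G_1\unrhd\dots\unrhd G_a$ and field degrees $e_1,\dots,e_a$ with $d:=\prod_j e_j$. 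Because the $j$-th extension degree divides the dimension of the relevant isotype over the field $\mathbb{F}_{q^{e_1\cdots e_{j-1}}}$, which divides $n/(e_1\cdots e_{j-1})$, we get $d\mid n$ and $G_a\le\GL_{n/d}(q^d)$; because each $G_{j-1}/G_j$ is cyclic of order dividing $e_j$, the product $\prod_{i\ge1}|G_i/G_{i+1}|$ divides $d$. This gives (i) and (ii), indeed with all quotients cyclic.

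\emph{Verifying (iii) for $G_a$, and the main obstacle.} At termination $G_a$ is absolutely irreducible over $\mathbb{F}_{q^d}$, so by Schur's lemma $C_{\GL(V)}(G_a)$ consists of $\mathbb{F}_{q^d}$-scalars; since $Z(G_a)$ lies there, (iii)(c) holds. Also at termination $V\downarrow_{F^*(G_a)}$ is homogeneous over $\mathbb{F}_{q^d}$ with absolutely irreducible constituents; homogeneity is (iii)(a), and for (iii)(d) one writes $F^*(G_a)$ as the central product of $Z(G_a)$, the $O_r(G_a)$ ($r\in\mathcal R$) and the $T(S)$ ($S\in\mathcal S$), and invokes the standard decomposition of an absolutely irreducible module for a central product over a splitting field — together with a Galois-descent argument to obtain it over $\mathbb{F}_{q^d}$ rather than only over the algebraic closure — to write a constituent $U$ of $V\downarrow_{F^*(G_a)}$ as the tensor product of a one-dimensional $Z(G_a)$-module, absolutely irreducible modules $U(r)$ for the $O_r(G_a)$, and modules $U(T)$ for the $T(S)$, each $U(T)$ being a tensor power of an absolutely irreducible $\mathbb{F}_{q^d}[S]$-module. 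The main obstacle is precisely the bookkeeping in the previous paragraph: one must verify that the descent terminates with $d\mid n$ and $\prod_{i\ge1}|G_i/G_{i+1}|\mid d$ intact, and — crucially — that the \emph{terminal} group $G_a$ really does have $F^*(G_a)$ homogeneous over $\mathbb{F}_{q^d}$ with the tensor factorization of (iii)(d). This last point is not automatic from weak quasiprimitivity of $G_a$, which can genuinely fail after a field extension (an $\mathbb{F}_q$-homogeneous restriction can split into Galois-conjugate non-isomorphic pieces over an extension); it has to be read off from the termination condition of the construction, and arranging the construction so that this holds while keeping the field degrees and quotient orders under control is the real content of the proof. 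Everything else is routine Clifford theory, Schur's lemma, and generalized Fitting subgroup theory.
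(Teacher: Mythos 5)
Your outline runs parallel to the paper's argument (reduce to the irreducible case; pass to a normal subgroup over an extension field $\mathbb{F}_{q^d}$ with small abelian quotients on top; then decompose $F^*(G_a)$ as a central product and tensor-factorize the constituents), but the two steps that carry all the weight are not actually proved. First, you yourself concede that it is ``not automatic'' that your iteration terminates with $V\downarrow_{F^*(G_a)}$ homogeneous over $\mathbb{F}_{q^d}$ with absolutely irreducible constituents, while keeping $d\mid n$ and $\prod_i|G_i/G_{i+1}|$ dividing $d$, and you describe arranging this as ``the real content of the proof'' without supplying it. That missing content is exactly what the paper imports: \cite[Lemma 2.13]{HRD} disposes of a non-scalar characteristic abelian subgroup (combined with a minimal-counterexample induction on $n$), and \cite[Section 2]{HROB} produces a normal subgroup $G_1\geq F^*(G)$ realizable over $\mathbb{F}_{q^d}$ on which the constituents of $F^*(G)$ become absolutely irreducible, with $G/G_1$ abelian of order dividing $d$. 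Your one-step Clifford argument (the kernel of the $G$-action on the centre of $\End_{\mathbb{F}_qF^*(G)}(V)$) does yield a cyclic quotient inside $\mathrm{Gal}(\mathbb{F}_{q^e}/\mathbb{F}_q)$, but you give no argument that iterating it restores homogeneity of $F^*$ over the enlarged field (you correctly note it can fail), nor that the terminal group is absolutely irreducible, which you then rely on for (iii)(c). As written, the proposal is a plan rather than a proof.

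Second, the descent of the tensor factors in (iii)(d) from a splitting field down to $\mathbb{F}_{q^d}$ is waved through as ``a Galois-descent argument''; this is the one step the paper writes out in full. There one factorizes the Brauer character as $\chi=\chi_0\bigl(\prod_{r}\chi_r\bigr)\bigl(\prod_{S}\chi_S^{t(S)}\bigr)$, deduces that the character fields $\mathbb{F}_{q^d}(\chi_r)$ and $\mathbb{F}_{q^d}(\chi_S)$ are contained in $\mathbb{F}_{q^d}(\chi)=\mathbb{F}_{q^d}$, and then applies \cite[Theorem 74.9]{CR} to realize $U(r)$ and $U(S)$ over $\mathbb{F}_{q^d}$; some such argument must appear. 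Two smaller points: in your reduction to the irreducible case, the parenthesis about ``absorbing the factor $k$ into the eventual divisor $d$'' is wrong and unnecessary — the divisor obtained for an irreducible constituent already divides $n/k$, hence $n$, and $d$ must not be inflated by $k$ since $G_a$ has to embed in $\GL_{n/d}(q^d)$; and the extraspecial statement in (iii)(b) requires knowing that all characteristic abelian subgroups of $G_a$ are scalar over the final field (the paper gets the structure from \cite[Lemmas 2.14 and 2.16]{HRD}), which again depends on the part of the construction you have left open.
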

\begin{proof}
Let $G$ be a counterexample to the lemma with $n$ minimal. Note first that since $G$ acts homogeneously on $V$, $G$ acts faithfully on each of its irreducible constituents. Thus, we may assume that $G$ is irreducible.

Now, suppose first that $G$ has a characteristic abelian subgroup which is not contained in $Z(\GL_n(q))$. Then by \cite[Lemma 2.13]{HRD}, there exists a divisor $d_1$ of $n$, and a characteristic subgroup $G_1$ of $G$, such that $G_1$ is isomorphic to a weakly quasiprimitive subgroup of $\GL_{n/d_1}(q^{d_1})$, and $G/G_1$ is abelian of order dividing $d_1$. If $d_1>1$, then the lemma holds for $G_1$, by the minimality of $G$ as a counterexample. But then the lemma clearly follows for $G$ -- a contradiction.

Thus, we may assume that all characteristic abelian subgroups of $G$ are contained in $Z(\GL_n(q))$. Let $\mathcal{R}$ be the set of primes $r$ such that $O_r(G)\not\le Z(G)$, and let $\mathcal{S}$ be a set of representatives for the conjugacy classes of subnormal quasisimple subgroups of $G$. Also, for $S\in\mathcal{S}$, set $T(S):=\langle S^g\text{ : }g\in G\rangle$. Then by \cite[Lemmas 2.14 and 2.16]{HRD}, $F^*(G)$ is a central product of $Z(G_a)$, the $O_{r}(G)$ for $r\in \mathcal{R}$, and the $T(S)$ for $S\in\mathcal{S}$. Each $O_{r}(G)$ is extraspecial of exponent $r(2,r)$. Furthermore, each $T(S)$ is a central product of $t(S)\geq 1$ copies of $S$.

Now, $F^*(G)$ is homogeneous, since $G$ is weakly quasiprimitive. Suppose that the irreducible constituents of $V\downarrow_{F^*(G)}$ are not absolutely irreducible. Then it is proved in \cite[Section2]{HROB} that there exists a divisor $d$ of $n$, and a normal subgroup $G_1$ of $G$ containing $F^*(G)$, such that:
\begin{itemize}
    \item $G_1$ is isomorphic to a subgroup of $\GL_{n/d}(q^{d})$;
    \item $V_0\downarrow_{F^*(G)}$ is homogeneous, with absolutely irreducible constituents; and
    \item $G/G_1$ is abelian of order dividing $d$.
\end{itemize}
(We remark that in \cite[Section 2]{HROB}, the authors assume that $G$ is absolutely irreducible. However, they only use this at the end of their argument to prove that $G/G_1$ is non-trivial. We do not require this additional assertion here, so we may use their arguments to conclude the points above.)
Note that since $F^*(G)\le G_1\unlhd G$, we have $F^*(G)=F^*(G_1)$.

Now, let $q_1=q^d$, and let $\mathbb{F}$ be a finite extension field of $\mathbb{F}_{q_1}$ such that $\mathbb{F}$ is a splitting field for each of the central factors $O_{r}(G_1)$, $S$ of $F^*(G_1)$, for $r\in\mathcal{R}$, $S\in\mathcal{S}$. Let $U$ be an irreducible $\mathbb{F}[F^*(G_1)]$-constituent of $V_0\downarrow_{F^*(G_1)}$. Then by \cite[Lemma 2.15]{HRD}, $U$ decomposes as a tensor product of a $1$-dimensional module $U_0$ for $Z(G_1)$, irreducible modules $U(r)$ for each $O_{r}(G)$, and irreducible modules $U(T)$ for each $T(S)$. The modules $U(r)$ are necessarily absolutely irreducible (over $\mathbb{F}$). Moreover, by \cite[Lemma 2.17]{HRD}, each $U(T)$ is a tensor product of $t(S)$ copies of a faithful absolutely irreducible module $U(S)$ for $S$. 

We claim that if $U'$ is one of the modules $U'=U(r)$ or $U'=U(S)$, then $U'$ is realizable over $\mathbb{F}_{q_1}$. Indeed, write $\chi$ for the Brauer character of $U$, and let $\chi_0$, $\chi_{r}$, and $\chi_{S}$ be the characters of the modules $U_0$, $U(r)$ and $U(S)$ respectively. Then $\chi=\chi_0(\prod_{r\in\mathcal{R}}\chi_{r})(\prod_{S\in\mathcal{S}}\chi_{S}^{t(S)})$. It follows that the extension field $\mathbb{F}_{q_1}(\chi_{r})$ of $\mathbb{F}_{q_1}$ generated by the values of $\chi_{r}$ is contained in $\mathbb{F}_{q_1}(\chi)$. Similarly, $\mathbb{F}_q(\chi_{S})$ is contained in $\mathbb{F}_{q_1}(\chi)$. Since $U$ is realizable over $\mathbb{F}_{q_1}$, \cite[Theorem 74.9]{CR} implies that $\mathbb{F}_{q_1}(\chi)=\mathbb{F}_{q_1}$. It follows that the character values of the modules $U(r)$ and $U(S)$ are all contained in $\mathbb{F}_{q_1}$. Another application of \cite[Theorem 74.9]{CR} then proves the claim: the modules $U(r)$ and $U(S)$ are realizable over $\mathbb{F}_{q_1}$. This completes the proof.  
\end{proof}

Our next lemma gives more information on the central factors of $F^*(G_a)$, where $G_a$ is as in Lemma \ref{lem:qlin}.
\begin{lemma}\cite[Lemmas 2.14--2.17]{HRD}\label{lem:MainPrimLin}
Let $q$ be a power of a prime $p$, and let $G\le \GL(V)=\GL_n(q)$ be weakly quasiprimitive. Let $d$, $G_a$, $\mathcal{R}$, $\mathcal{S}$, $U(r)$, and $U(T)$ be as in Lemma \ref{lem:qlin}. Also, set $q_1:=q^d$, and define $\mathcal{X}$ to be the set of subgroups $X$ of $G_a$ which either have the form $X=O_r(G_a)$ for $r\in\mathcal{R}$, or $X:=T(S)$ with $S\in\mathcal{S}$ Then the following assertions hold:
\begin{enumerate}[\upshape(i)]
        \item 
        Fix $r\in\mathcal{R}$, so that $O_r(G_a)$ is extraspecial, of order $r^{1+2m_r}=r^{1+2m}$ say. Then $G_a/C_{G_a}(O_r(G_a))O_r(G_a)$ is isomorphic to a completely reducible subgroup $L$ of $\Sp_{2m}(r)$, acting naturally on $O_r(G_a)/Z(O_r(G_a))\cong r^{2m}$. Moreover, $G_a/C_{G_a}(O_r(G_a))$ has shape $r^{2m}.L$, and $\dim_{\mathbb{F}_q}{U(r)}=r^m$.
        \item 
        Fix $S\in\mathcal{S}$, so that $T(S)$ is a central product of $t=t(S)$ copies of $S$. Then $G_a/C_{G_a}(T(S))$ is isomorphic to a subgroup of the wreath product $A(S)\wr \Sym_t$, where $A(S)$ is the stabilizer in $\Aut(S)$ of the module $U(S)$. Moreover, $G_a/C_{G_a}(T(S))$ contains $[S/Z(S)]^t$, and $\dim_{\mathbb{F}_q}{U(T)}=(\dim_{\mathbb{F}_q}{U(S)})^{t(S)}$.
    \end{enumerate}
In particular, writing $s(S):=\dim_{\mathbb{F}_{q_1}}{U(S)}$, we see that $\left(\prod_{r\in\mathcal{R}}r^{m_r}\right)\left(\prod_{S\in\mathcal{S}}s(S)^{t(S)}\right)$ divides $n$. 
\end{lemma}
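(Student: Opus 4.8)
The plan is to observe that assertions (i) and (ii) are \cite[Lemmas 2.14--2.17]{HRD} applied to the group $G_a$ of Lemma \ref{lem:qlin} (with $q$ replaced by $q_1=q^d$): the central-product description of $F^*(G_a)$ needed even to state them is already part of Lemma \ref{lem:qlin}(iii)(b), and the embeddings $G_a/C_{G_a}(O_r(G_a))O_r(G_a)\hookrightarrow\Sp_{2m}(r)$ in (i) and $G_a/C_{G_a}(T(S))\hookrightarrow A(S)\wr\Sym_t$ in (ii) arise, respectively, from the action of $G_a$ on $O_r(G_a)/Z(O_r(G_a))\cong r^{2m}$ through the $G_a$-invariant nondegenerate alternating form furnished by the commutator map (with complete reducibility of the image coming from the fact that the preimage in $O_r(G_a)$ of an invariant subspace is normal in $G_a$, together with the homogeneity of $F^*(G_a)$), and from the action of $G_a$ permuting the $t=t(S)$ quasisimple direct factors of $T(S)$ while fixing the isomorphism class of $U(S)$. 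I would simply cite \cite{HRD} for the details of (i) and (ii), extracting from them only the two dimension formulas I need below, namely $\dim_{\mathbb{F}_{q_1}}U(r)=r^{m_r}$ and, since $U(T)=U(S)^{\otimes t(S)}$ by Lemma \ref{lem:qlin}(iii)(d), $\dim_{\mathbb{F}_{q_1}}U(T)=s(S)^{t(S)}$.

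It then remains to prove the final divisibility statement, which I would obtain by a dimension count. Let $V_0$ be the natural $\GL_{n/d}(q_1)$-module, so that $\dim_{\mathbb{F}_{q_1}}V_0=n/d$, and fix an irreducible constituent $U$ of $V_0\downarrow_{F^*(G_a)}$. By Lemma \ref{lem:qlin}(iii)(d), $U$ is the tensor product over $\mathbb{F}_{q_1}$ of a one-dimensional module for $Z(G_a)$, the modules $U(r)$ for $r\in\mathcal{R}$, and the modules $U(T)=U(S)^{\otimes t(S)}$ for $S\in\mathcal{S}$, each of these factors being absolutely irreducible over $\mathbb{F}_{q_1}$; hence the dimensions multiply, giving $\dim_{\mathbb{F}_{q_1}}U=\left(\prod_{r\in\mathcal{R}}r^{m_r}\right)\left(\prod_{S\in\mathcal{S}}s(S)^{t(S)}\right)$ by the two formulas recorded above.

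Finally, $F^*(G_a)$ acts homogeneously on $V_0$ by Lemma \ref{lem:qlin}(iii)(a), so every irreducible constituent of $V_0\downarrow_{F^*(G_a)}$ is isomorphic to $U$, and therefore $\dim_{\mathbb{F}_{q_1}}U$ divides $\dim_{\mathbb{F}_{q_1}}V_0=n/d$. Consequently $\left(\prod_{r\in\mathcal{R}}r^{m_r}\right)\left(\prod_{S\in\mathcal{S}}s(S)^{t(S)}\right)$ divides $n/d$, and hence divides $n$, as claimed. I do not expect a genuine obstacle here: (i) and (ii) are quoted verbatim from \cite{HRD}, and the divisibility is a two-line tensor-dimension argument. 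The only point requiring care is bookkeeping of the field of definition — every module and every dimension in the count must be taken over $\mathbb{F}_{q_1}=\mathbb{F}_{q^d}$, which is exactly the field over which Lemma \ref{lem:qlin} guarantees the factor modules $U(r)$ and $U(S)$ are realizable and absolutely irreducible.
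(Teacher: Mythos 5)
Your proposal is correct and matches the paper's treatment: the paper likewise presents (i) and (ii) as direct quotations of \cite[Lemmas 2.14--2.17]{HRD} applied to $G_a\le\GL_{n/d}(q_1)$, and the closing divisibility assertion is exactly the tensor-dimension count you give, using Lemma \ref{lem:qlin}(iii)(a),(d) so that $\dim_{\mathbb{F}_{q_1}}U=\bigl(\prod_{r\in\mathcal{R}}r^{m_r}\bigr)\bigl(\prod_{S\in\mathcal{S}}s(S)^{t(S)}\bigr)$ divides $\dim_{\mathbb{F}_{q_1}}V_0=n/d$. Note that you in fact establish the slightly stronger conclusion that this product divides $n/d$, which is precisely the form invoked later in the proof of Theorem \ref{thm:MainTheoremPrimLin}.
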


The following two consequences of Lemmas \ref{lem:qlin} and \ref{lem:MainPrimLin} will be useful in our proofs of Theorems \ref{thm:JordanAnalogue} and \ref{thm:MainTheoremr}, respectively.
\begin{cor}\label{cor:PrimMpsCor}
Let $q$ be a power of a prime $p$, and let $G\le \GL(V)=\GL_n(q)$ be weakly quasiprimitive. Adopt the notation of Lemmas \ref{lem:qlin} and \ref{lem:MainPrimLin}. Then $M_p(G_a/Z(G_a))\le \prod_{X\in\mathcal{X}}M_p(G_a/C_{G_a}(X))$. \end{cor}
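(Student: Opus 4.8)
The plan is to exploit the structure of $F^*(G_a)$ described in Lemma \ref{lem:qlin}(iii)(b) together with the multiplicativity and monotonicity properties of $M_p$ recorded in Corollary \ref{cor:MpCor}. Write $Z:=Z(G_a)$ and $C:=F^*(G_a)$. Since $C$ is a central product of $Z$ together with the subgroups in $\mathcal{X}$ (namely the $O_r(G_a)$ for $r\in\mathcal{R}$ and the $T(S)$ for $S\in\mathcal{S}$), the group $C/Z$ is a central (indeed internal direct) product of the images $\ol{X}:=XZ/Z$ for $X\in\mathcal{X}$. Now the key group-theoretic input is that $G_a$ acts on $C$, and by Lemma \ref{lem:MainPrimLin} each factor $X\in\mathcal{X}$ is normalized by $G_a$, so $G_a$ permutes these factors trivially and $G_a/C_{G_a}(C)$ embeds into the (external) direct product $\prod_{X\in\mathcal{X}}\bigl(G_a/C_{G_a}(X)\bigr)$ via the map $g\mapsto (gC_{G_a}(X))_{X\in\mathcal{X}}$; moreover this embedding is \emph{subdirect}, since each coordinate projection is surjective onto $G_a/C_{G_a}(X)$.

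From here the argument is a short chain of applications of Corollary \ref{cor:MpCor}. First, $C_{G_a}(C)=Z(C)=Z(G_a)=Z$ because $C=F^*(G_a)$ is self-centralizing modulo... more precisely $C_{G_a}(F^*(G_a))\le F^*(G_a)$, so $C_{G_a}(C)=C_{G_a}(C)\cap C=Z(C)=Z$. (If one wants to be careful about the equality $Z(C)=Z(G_a)$, note $Z(G_a)\le Z(C)$ trivially, while $Z(C)$ is a characteristic abelian subgroup of $G_a$ centralizing $C$, hence $Z(C)\le C_{G_a}(C)=C_{G_a}(F^*(G_a))\le F^*(G_a)$ so $Z(C)=Z(Z(C)\cdot C)\le Z$... one checks $Z(C)$ is abelian normal in $G_a$ and central in $C$, and by Lemma \ref{lem:qlin}(iii)(c) every such subgroup lies in $Z(G_a)$.) Consequently $G_a/Z(G_a)=G_a/C_{G_a}(C)$ is a subdirect subgroup of $\prod_{X\in\mathcal{X}}\bigl(G_a/C_{G_a}(X)\bigr)$, and Corollary \ref{cor:MpCor}(iii) yields
\[
M_p\bigl(G_a/Z(G_a)\bigr)\le \prod_{X\in\mathcal{X}}M_p\bigl(G_a/C_{G_a}(X)\bigr),
\]
which is exactly the assertion.

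The main obstacle — really the only non-bookkeeping point — is establishing cleanly that $G_a/Z(G_a)$ is a \emph{subdirect} subgroup of $\prod_{X\in\mathcal{X}}\bigl(G_a/C_{G_a}(X)\bigr)$: one must verify both that the diagonal map $g\mapsto(gC_{G_a}(X))_X$ has kernel exactly $\bigcap_X C_{G_a}(X)=C_{G_a}(C)=Z(G_a)$ (using that $C$ is generated by the $X\in\mathcal{X}$ together with the central subgroup $Z(G_a)$, so centralizing every $X$ forces centralizing $C$), and that each projection is onto (immediate, as it is the canonical quotient map $G_a\twoheadrightarrow G_a/C_{G_a}(X)$). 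Once that identification is in place, parts (i) and (iii) of Corollary \ref{cor:MpCor} do all the remaining work, and no finite-simple-group input or explicit computation is needed.
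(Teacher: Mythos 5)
Your argument is correct and is essentially the paper's own proof: both identify $Z(G_a)=C_{G_a}(F^*(G_a))=\bigcap_{X\in\mathcal{X}}C_{G_a}(X)$, deduce that $G_a/Z(G_a)$ embeds as a subdirect subgroup of $\prod_{X\in\mathcal{X}}G_a/C_{G_a}(X)$, and then apply Corollary \ref{cor:MpCor}. The only difference is cosmetic: the paper dispatches the equality $C_{G_a}(F^*(G_a))=Z(G_a)$ by appealing to standard generalized Fitting subgroup theory in the setting of Lemma \ref{lem:qlin}, whereas you give a somewhat laboured but workable direct verification of that point.
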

\begin{proof}
Adopting the notation in the statement of the corollary, we have $Z(G_a)=C_{G_a}(F^*(G_a))=\bigcap_{X\in\mathcal{X}}C_G(X)$ by standard theory of generalized Fitting subgroups. Writing $\mathcal{X}=\{X_1,\hdots,X_m\}$, it follows that $G_a/Z(G_a)$ embeds as a subdirect subgroup in the direct product $G_a/C_{G_a}(X_1)\times\hdots\times G_a/C_{G_a}(X_m)$. The result now follows immediately from Corollary \ref{cor:MpCor}.
\end{proof}

\begin{cor}\label{cor:PrimSimpleCor}
Let $q$ be a power of a prime $p$ and let $G\le \GL(V)=\GL_n(\mathbb{F})$ be weakly quasiprimitive. Suppose that $N$ is a normal subgroup of $G$ contained in $\mathrm{Frat}(G)$, with $G/N$ a nonabelian simple group. Then one of the following holds:
\begin{enumerate}[\upshape(i)]
    \item There exists a prime power divisor $r^m$ of $n$ such that $G/N$ is a section of $\Sp_{2m}(r)$; or
    \item There exists a divisor $s^t$ of $n$, with $s\geq 2$, such that either:
    \begin{enumerate}[\upshape(a)]
        \item $G/N$ has a projective irreducible representation of dimension $s$ over $\ol{\mathbb{F}}$; or
        \item $G/N$ is a section of $\Sym_t$.
    \end{enumerate}
\end{enumerate}
\end{cor}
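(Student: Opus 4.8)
The plan is to realise the nonabelian simple group $G/N$ as a composition factor of one of the ``small'' groups that govern the structure of a weakly quasiprimitive linear group, and then to translate each possibility into one of the stated conclusions. First I would dispose of a technicality: since $G$ is finite we may assume $\mathbb{F}=\mathbb{F}_q$ with $q=p^f$ (passing to an irreducible constituent of $V$, on which $G$ still acts faithfully and weakly quasiprimitively, only replaces $n$ by a divisor of itself, and an $\mathbb{F}[G]$-module in characteristic $p$ is realisable over a finite field, with weak quasiprimitivity inherited by a suitable finite form). Then I would apply Lemma \ref{lem:qlin} to obtain a divisor $d$ of $n$ and a subnormal subgroup $G_a\le\GL_{n/d}(q^d)$ of $G$ with $G/G_a$ solvable, together with the description of $F^*(G_a)$ as a central product of $Z(G_a)$, the extraspecial groups $O_r(G_a)$ ($r\in\mathcal{R}$), and the groups $T(S)$ ($S\in\mathcal{S}$), where $T(S)$ is a central product of $t(S)\ge 1$ copies of the quasisimple group $S$. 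Write $\mathcal{X}$ for the set of these $O_r(G_a)$ and $T(S)$, as in Lemma \ref{lem:MainPrimLin} and Corollary \ref{cor:PrimMpsCor}.

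Next, I would locate $G/N$ among the composition factors of the quotients $G_a/C_{G_a}(X)$, $X\in\mathcal{X}$. Since $G/N$ is a nonabelian simple quotient of $G$ it is a composition factor of $G$; as $G/G_a$ is solvable it is a composition factor of $G_a$, and as $Z(G_a)$ is abelian it is a composition factor of $G_a/Z(G_a)$. By the proof of Corollary \ref{cor:PrimMpsCor}, $Z(G_a)=\bigcap_{X\in\mathcal{X}}C_{G_a}(X)$, so $G_a/Z(G_a)$ is a subdirect subgroup of $\prod_{X\in\mathcal{X}}G_a/C_{G_a}(X)$; arguing exactly as in Corollary \ref{cor:MpCor}(iii), every composition factor of a subdirect subgroup of a direct product is a composition factor of one of the direct factors. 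Hence $G/N$ is a composition factor of $G_a/C_{G_a}(X)$ for some $X\in\mathcal{X}$ (and $\mathcal{X}\ne\emptyset$, else $F^*(G_a)=Z(G_a)$ would force $G_a$, hence $G$, to be solvable).

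Finally, I would read off the conclusion from Lemma \ref{lem:MainPrimLin} according to the type of $X$. If $X=O_r(G_a)$, of order $r^{1+2m}$, then $r^m\mid n$ and $G_a/C_{G_a}(X)$ has shape $r^{2m}.L$ with $L$ a section of $\Sp_{2m}(r)$; as $r^{2m}$ is abelian, $G/N$ is a composition factor of $L$, so $G/N$ is a section of $\Sp_{2m}(r)$, which is conclusion (i). If $X=T(S)$, set $t:=t(S)$ and $s:=s(S)$; then $s^t\mid n$, and $G_a/C_{G_a}(X)$ embeds in $A(S)\wr\Sym_t$ with $A(S)\le\Aut(S)$. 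Here $S$ is quasisimple, hence perfect, so the faithful absolutely irreducible module $U(S)$ has dimension $s\ge 2$, and moreover $\Aut(S)$ embeds in $\Aut(S/Z(S))$ (an automorphism trivial on $S/Z(S)$ induces a homomorphism $S\to Z(S)$, which is trivial since $S=S'$); thus the only nonabelian composition factor of $A(S)\wr\Sym_t$ is $S/Z(S)$, besides the nonabelian composition factors of $\Sym_t$. If $G/N\cong S/Z(S)$, then $Z(S)$ acts on $U(S)$ by scalars, so $U(S)$ induces a projective irreducible representation of $S/Z(S)=G/N$ of dimension $s\ge 2$ over $\overline{\mathbb{F}}$: conclusion (ii)(a), with divisor $s^t$. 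Otherwise $G/N$ is a nonabelian composition factor of $\Sym_t$, hence a section of $\Sym_t$: conclusion (ii)(b), with the same divisor $s^t\mid n$, $s\ge 2$.

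The structural results of Lemmas \ref{lem:qlin} and \ref{lem:MainPrimLin} do essentially all the work, so I do not expect a genuinely hard step; the point most in need of care is the quasisimple-versus-simple bookkeeping --- confirming that $\Aut(S)$ has $S/Z(S)$ as its unique nonabelian composition factor, that a faithful irreducible representation of a quasisimple group has dimension at least $2$, and that the scalar action of $Z(S)$ really does turn the linear module $U(S)$ into a projective representation of $G/N$ of the claimed dimension. The reduction to a finite field is the only other place where one must pause, and it is routine.
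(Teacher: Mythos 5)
Your route is genuinely different from the paper's, and mostly sound: you never invoke the hypothesis $N\le\Frat(G)$, but instead prove the stronger claim that \emph{every} nonabelian composition factor of $G$ satisfies (i) or (ii), by pushing such a factor from $G$ into $G_a$ (the quotients along the series of Lemma \ref{lem:qlin} being abelian), then into $G_a/Z(G_a)$, and then, via the subdirect-product argument of Corollary \ref{cor:MpCor}(iii) and the identity $Z(G_a)=\bigcap_{X\in\mathcal{X}}C_{G_a}(X)$, into some $G_a/C_{G_a}(X)$. The paper instead uses $N\le\Frat(G)$ twice: to force $G=G_a$ (from $G=G_aN$ and the nongenerator property) and to conclude $G/NC_G(X)\cong G/N$, so that $G/N$ is a \emph{quotient} of each $G/C_G(X)$. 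Your reduction is fine and buys generality, but it obliges you to handle composition factors of \emph{subgroups} rather than quotients, and that is where the one genuine gap lies.

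In the case $X=T(S)$ you argue: $G_a/C_{G_a}(X)$ embeds in $A(S)\wr\Sym_t$, the only nonabelian composition factor of $A(S)\wr\Sym_t$ besides those of $\Sym_t$ is $S/Z(S)$, hence $G/N\cong S/Z(S)$ or $G/N$ is a section of $\Sym_t$. This uses the false principle that a composition factor of a subgroup is a composition factor of the ambient group (e.g.\ $\Alt_5\le\Sym_6$ has composition factor $\Alt_5$, which is not a composition factor of $\Sym_6$). For an arbitrary subgroup of $A(S)\wr\Sym_t$ the dichotomy fails: it could have a nonabelian composition factor that is a proper subquotient of $S/Z(S)$, for which conclusion (ii)(a) with dimension exactly $s$ does not follow from $U(S)$. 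What rescues the dichotomy is the clause of Lemma \ref{lem:MainPrimLin}(ii) you never invoke: $G_a/C_{G_a}(T(S))$ \emph{contains} $[S/Z(S)]^t$. Setting $K:=(G_a/C_{G_a}(X))\cap A(S)^t$, one has $[S/Z(S)]^t\le K$ and $K/[S/Z(S)]^t$ embeds in $(A(S)/\inn(S))^t\le\Out(S/Z(S))^t$, which is soluble by Schreier's lemma; so the only nonabelian composition factor of $K$ is $S/Z(S)$, while $(G_a/C_{G_a}(X))/K$ embeds in $\Sym_t$, so its composition factors are sections of $\Sym_t$. With these two lines inserted your proof closes; note this containment-plus-Schreier step is exactly the ingredient the paper's own proof uses at the same point. (Your $X=O_r(G_a)$ case is fine as written, since there only ``section of $\Sp_{2m}(r)$'' is needed, and the reduction to a finite field at the start is an issue the paper itself leaves implicit, so I would not press you further on it.)
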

\begin{proof}
Consider the subnormal series $G_a\unlhd\hdots\unlhd G_0=:G$ from Lemma \ref{lem:qlin}. Since the groups $G_i/G_{i+1}$ are abelian and $G/N$ is nonabelian simple, we must have $G=G_{2}N=\hdots=G_aN$. But then we must have $G_a=G$, since $N\le\mathrm{Frat}(G)$.   

So we have proved that $G=G_a$. Adopt the notation of Lemmas \ref{lem:qlin} and \ref{lem:MainPrimLin}, and let $X$ be a subgroup of $G$ which either has the form $X=O_r(G)$ for some $r\in\mathcal{R}$, or $X=T(S)$ for some $S\in\mathcal{S}$. Then since $N\le\mathrm{Frat}(G)$ and $C_G(X)\neq G$, we have $NC_G(X)\neq G$, whence $G/NC_G(X)\cong G/N$. Proving the claim is now just a matter of inspecting the nonabelian simple quotients of the groups $X$: If $X=O_r(G)\cong r^{1+2m}$ for some prime $r$, then $G/C_G(X)\cong r^{2m}.L$, where $L$ is a completely reducible subgroup of $\Sp_{2m}(r)$, and $r^m$ divides $n$. It follows that $G/N$ is a section of $\Sp_{2m}(r)$. If $X=T(S)$ for some quasisimple group $S$, then $G/C_G(X)$ is a subgroup of $A\wr \Sym_t$ containing $[S/Z(S)]^t$, where $A$ is almost simple with socle $S/Z(S)$. Since $A/S$ is soluble by Schreier's lemma, it follows that $G/N$ is either a section of $\Sym_t$, or isomorphic to $S$. The result now follows from Lemma \ref{lem:MainPrimLin}.   
\end{proof}

\section{The proofs of Theorems \ref{thm:MainTheoremPerm} and \ref{thm:MainTheoremr}}\label{sec:asymptotic}
Having recorded the information we need on primitive linear groups in Section \ref{sec:primsec}, we can now get back to the task of proving our main theorems. We remind the reader that our main tool in proving Theorems \ref{thm:JordanAnalogue} and \ref{thm:MainTheoremPrimLin} is Corollary \ref{cor:MpCor}, and more generally, the function $M_p$ and its behaviour on the nonabelian finite simple groups. We have already determined upper bounds on $M_p(S)$ when $S$ is an alternating group or a group of Lie type in characteristic $p$. When $S$ is sporadic or in $\mathrm{Lie}(r)$ for $r\neq p$, our bound on $M_p(S)$ will come from Theorem \ref{thm:MainTheoremr}. For this reason, the second half of the current section will be dedicated to the proof of this theorem.

In the first half of the section, and partly because it is needed as part of the proof of Theorem \ref{thm:MainTheoremr}, we will prove Theorem \ref{thm:MainTheoremPerm}. 
Our first preparatory result in this direction is similar to Lemma \ref{lem:basep}, and will help us to prove Theorem \ref{thm:MainTheoremPerm} in the case when the permutation group in question is the full alternating or symmetric group.
\begin{lemma}\label{lem:basepbound}
Fix a prime $p>3$ and a positive integer $n\geq 2$. Write the base $p$ expansion of $n$ as $n=\sum_{i=1}^{k}a_ip^{b_i}$, where $0<a_i\le p-1$ for each $i$, and $0\le b_1<\hdots<b_k\le \log_p{n}$. Then $\prod_{i=1}^ka_i!p^{b_i}\le (p-1)!^{\frac{n-1}{p-2}}$. Furthermore, if $a_i=1$ for all $i$, then $\prod_{i=1}^ka_i!p^{b_i}\le \frac{1}{2}(p-1)!^{\frac{n-1}{p-2}}$.
\end{lemma}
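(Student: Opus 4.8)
The plan is to reduce the inequality $\prod_{i=1}^k a_i!\,p^{b_i}\le (p-1)!^{(n-1)/(p-2)}$ to a term-by-term estimate by working with exponents. Taking logarithms base $(p-1)!$, it suffices to show $\sum_{i=1}^k\bigl(\log_{(p-1)!}(a_i!)+b_i\log_{(p-1)!}p\bigr)\le (n-1)/(p-2)$; but a cleaner route is to establish, for each single term, an inequality of the shape $a_i!\,p^{b_i}\le (p-1)!^{(a_ip^{b_i}-c_i)/(p-2)}$ for suitable $c_i>0$ summing to at least $1$, and then multiply these together. So first I would prove two base cases: (1) for $1\le a\le p-1$ one has $a!\le (p-1)!^{(a-1)/(p-2)}$, which holds since $\log(a!)$ is a convex function of $a$ and agrees with the linear interpolant between $a=1$ (value $0$) and $a=p-1$ (value $\log((p-1)!)$) at the endpoints — actually convexity gives $\le$ the chord only for concave functions, so I'd instead check it directly by the multiplicativity structure of factorials or note $a!\le (p-1)!$ and $(a-1)/(p-2)$... this needs care; the correct statement is $a! \le (p-1)!^{(a-1)/(p-2)}$ fails in general, so I would instead use $a!\,p^{b}\le (p-1)!^{(ap^b - 1)/(p-2)}$ proved by induction on $b$.

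Concretely, the key step is the following single-block estimate: for $0<a\le p-1$ and $b\ge 0$,
\begin{equation}\label{eq:singleblock}
a!\,p^b \le (p-1)!^{(a p^b - 1)/(p-2)}.
\end{equation}
For $b=0$ this reads $a!\le (p-1)!^{(a-1)/(p-2)}$, which I would verify by observing that for $p\ge 5$ we have $(p-1)!^{1/(p-2)}\ge (p-1)$ (since $(p-1)!\ge (p-1)^{p-2}$ is false — rather $(p-1)! < (p-1)^{p-2}$, so one needs the reverse: $(p-1)!^{1/(p-2)}$ lies between, say, $\sqrt{(p-1)!}^{\,2/(p-2)}$...). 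The honest approach: check \eqref{eq:singleblock} for $b=0$ directly for small primes and then note $a!$ grows slower than the claimed power; and for the inductive step $b\to b+1$, multiply both sides of the $b$-case by appropriate factors, using $p^{p^b}\le (p-1)!^{(p^{b+1}-p^b)/(p-2)}$, which follows from $p\le (p-1)!^{(p-1)/(p-2)}$ — and this last inequality does hold for $p\ge 5$ since $(p-1)!\ge p^{(p-2)/(p-1)}$ is easy to check for $p=5,7$ and then grows. Summing \eqref{eq:singleblock} over the $k$ digit-blocks and using $\sum a_ip^{b_i}=n$, $\sum 1 = k\ge 1$ gives $\prod a_i!\,p^{b_i}\le (p-1)!^{(n-k)/(p-2)}\le (p-1)!^{(n-1)/(p-2)}$, which is the main claim.

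For the sharper bound when all $a_i=1$: in that case $k = k_2(n)$... no, $k$ is the number of nonzero base-$p$ digits, and since $n\ge 2$ and all digits equal $1$, either $k\ge 2$ (giving $(p-1)!^{(n-2)/(p-2)}\le \tfrac12(p-1)!^{(n-1)/(p-2)}$ provided $(p-1)!^{1/(p-2)}\ge 2$, true for $p\ge 5$), or $k=1$, meaning $n=p^{b_1}$ with $b_1\ge 1$; then the left side is $p^{b_1}$ and I must show $p^{b_1}\le\tfrac12(p-1)!^{(p^{b_1}-1)/(p-2)}$, which follows from a slightly strengthened version of \eqref{eq:singleblock} carrying the extra factor of $\tfrac12$ through the induction from the base case $b_1=1$: $p\le\tfrac12(p-1)!^{(p-1)/(p-2)}$, valid for $p\ge 5$. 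The main obstacle I anticipate is pinning down the base case of \eqref{eq:singleblock} cleanly — getting a uniform, easily-verified inequality $a!\le (p-1)!^{(a-1)/(p-2)}$ for all $1\le a\le p-1$ and all primes $p\ge 5$ — and I would handle it by a short induction on $a$ using $a! = a\cdot(a-1)!$ together with the elementary fact that $a\le (p-1)!^{1/(p-2)}$ for $2\le a\le p-1$ (since $(p-1)!\ge (p-1)^{p-2}/\text{(small factor)}$, or more simply $a^{p-2}\le (p-1)!$ by pairing factors), so that the per-step multiplicative increase is controlled.
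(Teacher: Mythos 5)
Your overall architecture is viable and genuinely different from the paper's: you prove a per-digit estimate $a!\,p^{b}\le (p-1)!^{(ap^{b}-1)/(p-2)}$ and multiply it over the $k\ge 1$ digit blocks to get exponent $(n-k)/(p-2)\le (n-1)/(p-2)$, with the refined $\tfrac12$-bound coming from $k\ge 2$ (absorbing the $2$ since $(p-1)!^{1/(p-2)}\ge 24^{1/3}>2$) or from the strengthened base case $p\le\tfrac12(p-1)!^{(p-1)/(p-2)}$ when $n=p^{b_1}$. The paper instead bounds $\prod a_i!\,p^{b_i}\le (p-1)!^{\sum(1+b_i)}$ (using $a_i!\le (p-1)!$ and $p\le (p-1)!$) and compares $\sum(1+b_i)$ with $(n-1)/(p-2)$ via $(p^{b_i}-1)/(p-1)\ge b_i+4$ when some $b_i\ge 2$, finishing with a short case analysis for $n=a_1p$ and $n=a_1+a_2p$; your single-block estimate, once established, avoids that residual case analysis.

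However, there is a genuine flaw in how you propose to prove the crucial base case $a!\le (p-1)!^{(a-1)/(p-2)}$ ($b=0$, $1\le a\le p-1$). Your final suggestion is an induction on $a$ using the ``elementary fact'' $a\le (p-1)!^{1/(p-2)}$, i.e.\ $a^{p-2}\le (p-1)!$, for all $2\le a\le p-1$; this is false for $a$ near $p-1$ (for $p=5$, $a=4$: $4^{3}=64>24$), and no per-step induction of this shape can work because the target inequality is an equality at $a=p-1$ while the late multiplicative steps exceed the geometric mean. The inequality itself is true, and the argument you first wrote down and then wrongly retracted is the correct one: $\log(a!)$ (equivalently $\log\Gamma(a+1)$) is convex, and a convex function lies \emph{below} its chords, so on $[1,p-1]$ it is at most the linear interpolant through $(1,0)$ and $(p-1,\log(p-1)!)$, which is exactly $\frac{a-1}{p-2}\log(p-1)!$. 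Equivalently, use the fact (which the paper itself invokes) that $m\mapsto m!^{1/(m-1)}$ is non-decreasing, since $(m+1)^{m-1}\ge m!$; this gives $a!^{1/(a-1)}\le (p-1)!^{1/(p-2)}$ for $2\le a\le p-1$. With the base case repaired in this way, your induction on $b$ (which only needs $p\le (p-1)!$ for $p\ge 5$) and the assembly over blocks do yield both assertions of the lemma.
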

\begin{proof}
We will begin by proving the first statement.
Note that $\prod_{i=1}^ka_i!p^{b_i}$ is the order of a centralizer in $\Sym_n$, so $\prod_{i=1}^ka_i!p^{b_i}\le n!$. Note also that since $(m+1)!\le (m+1)^m$, the function $m\rightarrow m!^{\frac{1}{m-1}}$ is non-decreasing on the set of natural numbers. Hence, if $p>n$, then we have $\prod_{i=1}^ka_i!p^{b_i}\le n!= n!^{\frac{n-1}{n-1}}\le (p-1)!^{\frac{n-1}{p-2}}$.

Thus, we may assume that $p\le n$. Now, since $p>3$, we have $p\le (p-1)!$. Since $a_i\le p-1$, it follows that
\begin{align}\label{lab:not91}
\prod_{i=1}^ka_i!p^{b_i}\le (p-1)!^{\sum_{i=1}^k(1+b_i)}.
\end{align}
Notice also that
\begin{align}\label{lab:not92}
\frac{n-1}{p-2}\geq \frac{n-1}{p-1}\geq\sum_{i=1}^{k}\frac{p^{b_i}-1}{p-1}.
\end{align}
Now, $(p^{b_i}-1)/(p-1)=\sum_{\ell=0}^{b_i-1}p^{\ell}\geq (b_i-1)p+1$. Since $p\geq 5$, we have $(b_i-1)p+1\geq b_i+4$ when $b_i\geq 2$. It follows that if $b_j\geq 2$ for some $j$, then $\sum_{i=1}^{k}(p^{b_i}-1)/(p-1)=\sum_{b_i\in\{0,1,b_j\}}(p^{b_i}-1)/(p-1)+ \sum_{b_i\not\in\{0,1,b_j\}}(p^{b_i}-1)/(p-1)\geq b_j+4+\sum_{b_i\not\in\{0,1,b_j\}}(1+b_i)\geq \sum_{i=1}^k(1+b_i)$. Thus, if $b_j\geq 2$ for some $j$, then the result follows from (\ref{lab:not91}) and (\ref{lab:not92}).  

So we may assume that $n\geq p$ either has the form $n=a_1p$, or $n=a_1+a_2p$. If the former case holds, then $a_1\le p-1$, and so $a_1!p\le p!$. Moreover, either $a_1=1$ (and the bound  $a_1!p=p\le (p-1)!^{\frac{p-1}{p-2}}=(p-1)!^{\frac{n-1}{p-2}}$ clearly holds), or $a_1\geq 2$, and so $(p-1)!^{\frac{n-1}{p-2}}\geq (p-1)!^2\geq p!$, which again gives us what we need.  

Suppose next that $n=a_1+a_2p$. We need to prove that $a_1!a_2!p\le (p-1)!^{\frac{n-1}{p-2}}$. If $a_2\geq 3$, then $(p-1)!^{\frac{n-1}{p-2}}\geq (p-1)!^3\geq (p-1)!p!\geq a_1!a_2!p$. If $a_2=2$, then $(p-1)!^{\frac{n-1}{p-2}}\geq (p-1)!^2\geq 2p!\geq a_1!a_2!p$. If $a_2=1$ and $a_1\le p-3$, then $a_1!a_2!p\le (p-3)!p\le (p-1)!\le (p-1)!^{\frac{n-1}{p-2}}$, since $p\geq 5$. If $a_2=1$ and $a_1>p-3$, then $(p-1)!^{\frac{n-1}{p-2}}>(p-1)!^{\frac{p-3+p-1}{p-2}}=(p-1)!^2\geq p!\geq a_1!a_2!p$. The result follows in each case.  

Assume finally that $a_i=1$ for all $i$. Since $p\geq 5$, we have $p\le \frac{1}{2}(p-1)!$. Then $\prod_{i=1}^ka_i!p^{b_i}=p^{\sum_{i=1}^kb_i}\le \frac{1}{2}(p-1)!^{\sum_{i=1}^kb_i}$. Since $(n-1)/(p-2)=(\sum_{i=1}^kp^{b_i}-1)/(p-2)\geq \sum_{i=1}^k(p^{b_i}-1)/(p-1)\geq \sum_{i=1}^k b_i$, the result follows.
\end{proof}

We are now ready to prove Theorem \ref{thm:MainTheoremPerm}.
\begin{proof}[Proof of Theorem \ref{thm:MainTheoremPerm}]
Let $G$ be a counterexample to the theorem with $n$ minimal. Since $M_p(G)\le |G|\le n!$, we may assume throughout that $n\geq p$.
Suppose first that $G\in\{\Alt_n,\Sym_n\}$. If $p=2=n$, then $M_p(G)=1<3^{(n-1)/2}$. If $p=2$ and $n>2$, then $M_p(G)=M_p(\Alt_n)\le 2^{n/2}\le 3^{(n-1)/2}$, by Theorem \ref{thm:AltThm}. If $p=3$, then we can check that $M_p(G)\le 20^{(n-1)/4}$ for $2\le n\le 7$. If $n\geq 8$, then $3^{n/2}2\le 20^{(n-1)/4}$, so $M_p(G)\le 2M_p(\Alt_n)\le 3^{n/2}2\le 20^{(n-1)/4}$, again using Theorem \ref{thm:AltThm}.

Assume now that $p>3$. If $n=6$, then we have $p=5$ and $M_p(G)\le 4M_p(\Alt_6)=20\le (5-1)!^{(6-1)/(5-2)}$. So assume that $n\neq 6$. Since $n\geq p\geq 5$, it follows that $M_p(G)\le 2M_p(\Alt_n)$. Now, write the base $p$ expansion of $n$ as $n=\sum_{i=1}^{k}a_ip^{b_i}$, where $0<a_i\le p-1$ for each $i$, and $0\le b_1<\hdots<b_k\le \log_p{n}$. Let $x:=x_p(n)$ be any corresponding element of $\Sym_n$. That is, $x$ is a permutation with $a_i$ cycles of length $p^{b_i}$, for each $1\le i\le k$. Set $d:=1$ if $a_i=1$ for all $i$, and $d:=2$ otherwise. Then $|C_{\Alt_n}(x)|=\frac{1}{d}\prod_{i=1}^ka_i!p^{b_i}$. Since $p>3$, Lemma \ref{lem:basepbound} and the definition of the function $M_p$ implies that $M_p(G)\le 2M_p(\Alt_n)\le 2|C_{\Alt_n}(x)|\le (p-1)!^{(n-1)/(p-2)}$. 

Suppose next that $G$ is primitive, but that $G\not\in\{\Alt_n,\Sym_n\}$. Then by \cite[Theorem 1.1]{Maroti}, one of the following holds:
\begin{enumerate}[(i)]
    \item $G$ is a subgroup of $\Sym_m\wr\Sym_r$ containing $\Alt_m^r$ (with $m\geq 5$, $r\geq 2$), where the action of $\Sym_m$ is on $k$-element subsets of $\{1,\hdots,m\}$ (with $1\le k\leq m/2$), and the wreath product has the product action of degree $n=\binom{m}{k}^r$;
    \item $G\in\{M_{11},M_{12},M_{23},M_{24}\}$ and $G$ is $4$-transitive; or
    \item $|G|\le n^{1+\log_2{n}}$. 
\end{enumerate}
Suppose first that we are in case (i), and let $N$ be the intersection of $G$ with the base group of the wreath product. Then $N$ is a subdirect subgroup in a direct product of the form $N_1\times\hdots\times N_r$, where $N_i\in\{\Alt_m,\Sym_m\}$ for each $i$. It then follows from Corollary \ref{cor:MpCor} and our work in the previous paragraphs that $M_p(G)=M_p(N)M_p(G/N)\le (p-1)!^{(m-1)r/(p-2)}M_p(G/N)$. Since $G$ is a counterexample to the theorem of minimal degree, we also have $M_p(G/N)\le (p-1)!^{(r-1)/(p-1)}$. Thus, $M_p(G)\le (p-1)!^{(mr-1)/(p-2)}\le (p-1)!^{(n-1)/(p-2)}$, with the latter inequality following since $mr\le m^r\le  \binom{m}{k}^r=n$.

In case (ii), we can use direct computation to check that in each of the listed Matheiu groups, we have $M_p(G)\le (p-1)!^{(n-1)/(p-2)}$ whenever $5\le p\le n$. 

Assume now that we are in case (iii). As noted previously, the function $m\rightarrow m!^{1/(m-1)}$ is increasing on $\mathbb{N}$. Thus, $2^{n-1}\le (p-1)!^{(n-1)/(p-2)}$ for $p\geq 2$. By \cite[Corollary 1.4]{Maroti}, we have $M_p(G)\le |G|\le 2^{n-1}$, unless $G$ is one of the groups of degree at most $24$ listed in \cite[Corollary 1.4(i)--(iv)]{Maroti}. In these listed cases, we can check that $M_p(G)\le (p-1)!^{(n-1)/(p-2)}$ whenever $5\le p\le n$. The result follows.

Suppose next that $G$ is imprimitive. Then $G$ may be embedded as as a subgroup of a wreath product $R\wr \Sym_s$, where $R\le \Sym_r$ is primitive, $r,s\geq 2$, $rs=n$, and $G\cap R^s$ is isomorphic to a subdirect subgroup of a direct product $A^s$ of $s$ copies of a subnormal subgroup $A$ of $R$. Since $M_p(A)\le M_p(R)$ by definition of the function $M_p$, it follows from Corollary \ref{cor:MpCor} that $M_p(G)\le M_p(R)^sM_p(G/G\cap R^s)$. Since $G$ is a counterexample of minimal degree, we have $M_p(G/G\cap R^s)\le h_p(s)$ and $M_p(R)\le h_p(r)$. We deduce that $M_p(G)\le h_p(r)^sh_p(s)$. Recalling that $h_p(m):=3^{(m-1)/2}$ if $p=2$;  $h_p(m):=20^{(m-1)/4}$ if $p=3$; $h_p(m):=(p-1)!^{(m-1)/(p-2)}$ if $3<p\le n$, the result follows.

Finally, assume that $G$ is intransitive. Then $G$ can be embedded as a subgroup of $\Sym_r\times \Sym_s$, where $r\geq 2$ and $r+s=n$. Setting $N:=G\cap (\Sym_r\times 1)$, the minimality of $G$ as a counterexample, together with Corollary \ref{cor:MpCor}, implies that $M_p(S)\le h_p(r)h_p(s)$. Since $h_p(r)h_p(s)=h_p(r+s)$, the result follows. 
\end{proof}

It will be useful for us to note the following corollary of Theorem \ref{thm:MainTheoremPerm} (and its proof).
\begin{cor}\label{cor:MainTheoremPermCor}
Let $G\le\Sym_n$ be a permutation group of degree $n\geq 2$. Then $M_p(G)\le h_p(n)\le p^{n-1}$.
\end{cor}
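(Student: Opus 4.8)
The plan is to deduce Corollary~\ref{cor:MainTheoremPermCor} from Theorem~\ref{thm:MainTheoremPerm} together with a uniform comparison $h_p(n)\le p^{n-1}$. The first inequality $M_p(G)\le h_p(n)$ is immediate: Theorem~\ref{thm:MainTheoremPerm} says the proportion of $p$-elements of $G$ is at least $1/h_p(n)$, i.e. $|\Ord(G,p)|\ge |G|/h_p(n)$, and Corollary~\ref{cor:MpCor}(ii) gives $M_p(G)\ge |G|/|\Ord(G,p)|$; combining these is not quite what we want, so instead I would argue directly that $M_p(G)\le h_p(n)$ by revisiting the proof of Theorem~\ref{thm:MainTheoremPerm}, which in fact establishes the \emph{stronger} statement $M_p(G)\le h_p(n)$ for every $G\le\Sym_n$ (the counterexample was chosen so that $M_p(G)>h_p(n)$, and the whole induction is phrased in terms of $M_p$). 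So the first half of the corollary is really just recording what the proof of Theorem~\ref{thm:MainTheoremPerm} already shows.

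For the second inequality $h_p(n)\le p^{n-1}$, I would split into the three cases in the definition of $h_p$. If $p=2$, then $h_2(n)=3^{(n-1)/2}$ and we need $3^{(n-1)/2}\le 2^{n-1}$, equivalently $3^{1/2}\le 2$, which holds. If $p=3$, then $h_3(n)=20^{(n-1)/4}$ and we need $20^{(n-1)/4}\le 3^{n-1}$, equivalently $20^{1/4}\le 3$, i.e. $20\le 81$, which holds. If $p>3$, then $h_p(n)=(p-1)!^{(n-1)/(p-2)}$ and we need $(p-1)!^{1/(p-2)}\le p$, equivalently $(p-1)!\le p^{p-2}$; this follows from the crude bound $(p-1)!=\prod_{j=1}^{p-1}j\le (p-1)^{p-1}\le p^{p-2}$ for $p\ge 3$ (the last step being $(p-1)^{p-1}\le p^{p-2}$, i.e. $(1-1/p)^{p-1}\le 1/p$... wait, that's false for large $p$) — so I would instead use the standard estimate $(p-1)!<p^{p-2}$ directly, which can be checked for small $p$ and then follows from, e.g., $\log((p-1)!)=\sum_{j=2}^{p-1}\log j< (p-2)\log p$ since each term $\log j\le\log(p-1)<\log p$ and there are $p-2$ terms with $j\ge 2$. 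This last line is the only place requiring a small amount of care.

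The main obstacle — such as it is — is purely bookkeeping: making sure the comparison $h_p(n)\le p^{n-1}$ holds across all three regimes of $p$ and for all $n\ge 2$, and in particular handling the $p>3$ case cleanly. There is no genuine mathematical difficulty here; the substance is entirely in Theorem~\ref{thm:MainTheoremPerm}, and this corollary is essentially a cosmetic reformulation plus an elementary inequality. I expect the write-up to be three or four lines.

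\begin{proof}
The inequality $M_p(G)\le h_p(n)$ follows directly from the proof of Theorem~\ref{thm:MainTheoremPerm}: the induction there is carried out for the function $M_p$, and shows $M_p(G)\le h_p(n)$ for all $G\le\Sym_n$ (this is exactly what it means for there to be no counterexample). For the second inequality, we check $h_p(n)\le p^{n-1}$ in each case. If $p=2$, then $h_2(n)=3^{(n-1)/2}\le 2^{n-1}$ since $\sqrt3\le 2$. If $p=3$, then $h_3(n)=20^{(n-1)/4}\le 3^{n-1}$ since $20^{1/4}\le 3$. If $p>3$, then $h_p(n)=(p-1)!^{(n-1)/(p-2)}$, and it suffices to show $(p-1)!\le p^{p-2}$; this holds because $\log((p-1)!)=\sum_{j=2}^{p-1}\log j<(p-2)\log p$, each of the $p-2$ summands being strictly less than $\log p$. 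Hence $h_p(n)\le p^{n-1}$ in all cases, and the corollary follows.
\end{proof}
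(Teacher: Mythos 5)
Your proof is correct and follows essentially the same route as the paper: both deduce $M_p(G)\le h_p(n)$ by observing that the induction in the proof of Theorem \ref{thm:MainTheoremPerm} is carried out for the function $M_p$ itself, and both then verify $h_p(n)\le p^{n-1}$ by an elementary factorial estimate (the paper uses $(p-1)!\le(p-1)^{p-2}$, you use the equivalent bound $(p-1)!<p^{p-2}$). The only caveat is cosmetic: the false start in your planning paragraph about $(p-1)^{p-1}\le p^{p-2}$ should not appear in a final write-up, but the proof environment itself is clean.
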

\begin{proof}
In the proof of Theorem \ref{thm:MainTheoremPerm}, we in fact proved that $M_p(G)\le h_p(n)$. Since $(p-1)!\le (p-1)^{p-2}$, we can see from the definition of the function $h_p$ that $h_p(n)\le p^{n-1}$. 
\end{proof}

We now move on to our preparations for the proof of Theorem \ref{thm:MainTheoremr}. These preparations consist of the next two lemmas, both of which follow immediately from known results on simple groups coming from the CFSG. The first concerns minimal degrees of permutation representations, while the second concerns minimal dimensions of projective representations.  
\begin{lemma}\label{lem:PermSimple}
Let $S$ be a nonabelian finite simple group in $\mathrm{Lie}(p)$, and write $P(S)$ for the minimal degree of a non-trivial permutation representation for $S$. Then $|S|\le P(S)^{2\log_2{P(S)}}$.
\end{lemma}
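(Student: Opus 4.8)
\textbf{Proof proposal for Lemma \ref{lem:PermSimple}.}

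The plan is to invoke the CFSG-based classification of minimal permutation degrees $P(S)$ for simple groups of Lie type, due to Guralnick, Kantor, Kassabov, Lubotzky and others (the tables are conveniently collected in \cite{KL} and in the work of Cooperstein, Vasilyev, etc.), together with the standard order formulas for the finite simple groups of Lie type. Since the claimed inequality $|S|\le P(S)^{2\log_2 P(S)}$ is equivalent to $\log_2 |S| \le 2(\log_2 P(S))^2$, the whole argument reduces to comparing two explicit functions of the Lie rank $\ell$ and the field size $r^a$: on one side $\log_2 |S|$, which grows like $c\,\ell^2 \log_2(r^a)$ for classical groups (with $c \in \{1, 2\}$ depending on type) and is bounded by a constant times $\log_2(r^a)$ for exceptional groups; on the other side $2(\log_2 P(S))^2$, using the known lower bounds for $P(S)$ — roughly $P(S) \geq (r^{a\ell}-1)/(r^a-1)$ for the linear groups, $P(S) \geq r^{a(2\ell-2)}$-type bounds for the other classical families, and $P(S)$ at least a fixed power of $r^a$ for each exceptional family.

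First I would split into cases by Lie type. For $S = \psl_{\ell+1}(r^a) = A_\ell(r^a)$, one has $P(S) = (r^{a(\ell+1)}-1)/(r^a-1) \geq r^{a\ell}$ (with the small exceptions $\psl_2(5),\psl_2(7),\psl_2(9),\psl_2(11),\psl_4(2)$ handled by direct inspection), so $\log_2 P(S) \geq a\ell \log_2 r$, hence $2(\log_2 P(S))^2 \geq 2a^2\ell^2(\log_2 r)^2$; meanwhile $\log_2|A_\ell(r^a)| < (\ell^2 + 2\ell)\,a\log_2 r \leq 2a^2\ell^2(\log_2 r)^2$ once $\ell \geq 1$ and $a\log_2 r \geq 1$, which always holds. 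For the symplectic, orthogonal and unitary families I would use the analogous minimal-degree lower bounds (e.g. $P(\PSp_{2\ell}(r^a)) \geq (r^{2a\ell}-1)/(r^a-1)$ for $r^a$ odd, and $r^a(r^a-1)(\ldots)$-type expressions otherwise) and the order bounds $\log_2|S| \leq (2\ell^2 + \ell)a\log_2 r$ or so; in each case the squaring on the right-hand side dominates because $\log_2 P(S)$ is already at least a constant times $\ell \cdot a \log_2 r$. For the exceptional groups $G_2, F_4, E_6, E_7, E_8, {}^2B_2, {}^2G_2, {}^3D_4, {}^2F_4$, the rank $\ell$ is bounded, so $\log_2|S| = O(a \log_2 r)$ while $P(S)$ is at least a fixed positive power $r^{ca}$ of the field, giving $2(\log_2 P(S))^2 = \Omega(a^2 (\log_2 r)^2)$, which beats $O(a\log_2 r)$ for all $r^a \geq 2$; here I would just tabulate the finitely many families and check the inequality against the explicit order formula and the explicit value of $P(S)$.

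The residual work is a finite check of the small-rank, small-field exceptions where the clean inequality $P(S) \geq r^{a\ell}$ (or its analogue) fails or where the crude order bound is too lossy — essentially $\psl_2(r^a)$ for small $r^a$, $\psl_3,\psl_4$ over $\mathbb{F}_2, \mathbb{F}_3$, $\PSp_4(r^a)$, $\Ug_3, \Ug_4$ over small fields, $\PSp_6(2)$, $\POmega_8^+(2)$, and the smallest exceptional groups like ${}^2B_2(8)$, $G_2(2)'$, ${}^2G_2(3)'$. For each of these there is an exact value of $|S|$ and of $P(S)$ recorded in the \cite{WebAtlas} / \cite{KL}, so one simply verifies $|S| \le P(S)^{2\log_2 P(S)}$ numerically; since $P(S) \geq 5$ in every case, the right-hand side is at least $5^{2\log_2 5} > 5^4 = 625$, which already clears most of them, and the few remaining (e.g. $\POmega_8^+(2)$ with $|S| \approx 1.7\times 10^8$ and $P(S) = 120$, where $120^{2\log_2 120} = 120^{13.8\ldots}$ is astronomically larger) are immediate. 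I expect the main obstacle to be purely organizational: assembling the correct minimal-degree values across all Lie type families with their exceptions, and choosing order bounds loose enough to state uniformly yet tight enough that the single squared logarithm on the right always wins — but there is no genuine difficulty, since the right-hand side is quadratic in $\log_2 P(S)$ while $\log_2|S|$ is, crudely, at most a constant multiple of $(\log_2 P(S))^2$ in every case and strictly smaller for all but the tiniest groups.
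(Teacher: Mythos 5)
Your proposal is correct and follows essentially the same route as the paper: the paper's proof simply cites the well-known CFSG-based values of $P(S)$ for groups of Lie type (from \cite[Table 4]{Guest}) and verifies $|S|\le P(S)^{2\log_2 P(S)}$ by inspection against the order formulas, which is exactly the comparison of $\log_2|S|$ with $2(\log_2 P(S))^2$ that you carry out family by family, with the small-rank and small-field exceptions checked numerically. Your write-up is just a more explicit version of that inspection.
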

\begin{proof}
The values of the function $P(S)$ are well-known -- see \cite[Table 4]{Guest}, for example. The bound in the lemma then follows immediately from inspection of the values of $|S|$ and $P(S)$ in each case. 
\end{proof}

\begin{lemma}\label{lem:RepSimple}
Fix a prime $p$, and let $S$ be a nonabelian finite simple group with $S\not\in\mathrm{Lie}(p)$. Write $n:=R_p(S)$ for the minimal dimension of a non-trivial projective representation for $S$ over a field of characteristic $p$. Then $|\Out(S)|\le 6\log_2{n}$. Furthermore, if $S$ is not an alternating group, and is not one of the groups $M_{12},M_{22},M_{24},J_2,\mathrm{Suz},Co_1,Co_2,\Lg_2(7),\Lg_2(8),\Lg_2(9),\Lg_3(4),\Ug_4(2),\Ug_4(3),$\\ $\PSp_6(2),\PSp_6(3)$, or $\POmega^+_8(2)$, then $|S|\le 2^{2(\log_2{n})^2+\log_2{n}}$.
\end{lemma}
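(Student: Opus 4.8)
The plan is to invoke the Classification of Finite Simple Groups and treat the three families — alternating, sporadic, and simple of Lie type in some characteristic $r$ — separately, in each case feeding standard lower bounds for $R_p(S)$ (the Landazuri--Seitz--Zalesskii bounds together with their small-rank refinements, available in the literature, e.g. \cite{KL}) into the known formulas for $|\Out(S)|$ and $|S|$. The first thing to record is that a nontrivial projective representation of a simple group has dimension at least $2$: a $1$-dimensional one would be trivial on the derived subgroup of a quasisimple cover of $S$, hence trivial. So $n=R_p(S)\ge 2$ always, whence $6\log_2 n\ge 6$. This already disposes of the bound $|\Out(S)|\le 6\log_2 n$ for alternating groups (where $|\Out(S)|\le 4$) and for sporadic groups (where $|\Out(S)|\le 2$).

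For $S$ of Lie type the first statement requires a little more. Writing $|\Out(S)|=d\,f\,g$, where $g\le 3$ accounts for graph automorphisms, $f=\log_r q$ for field automorphisms, and $d\le \ell+1$ (much smaller outside type $A$) for diagonal automorphisms, one uses the LSZ lower bound: $R_p(S)$ is bounded below by a fixed positive power of $q$ times a quantity growing (at least) linearly in the untwisted rank $\ell$, so $\log_2 n$ is bounded below by an absolute constant times $\ell+f$. A short finite check in the small-rank, small-$q$ range then yields $6\log_2 n\ge d f g$ in all cases.

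For the second statement only the Lie type groups are at issue. The alternating groups are \emph{genuinely} excluded: for $m$ large one has $R_p(\Alt_m)\in\{m-2,m-1\}$ (the fully deleted permutation module) while $|\Alt_m|$ grows like $2^{m\log_2 m}$, so no bound of shape $2^{O((\log n)^2)}$ can hold; and the sporadic exceptions $M_{12},M_{22},M_{24},J_2,\Suz,\Co_1,\Co_2$ are precisely those finitely many sporadic groups for which $|S|$ exceeds $2^{2(\log_2 R_p(S))^2+\log_2 R_p(S)}$, the remaining sporadics being verified directly from their (known) minimal projective dimensions. For $S\in\mathrm{Lie}(r)$ with $r\ne p$, write $S=\ol{X}_{\ol{\sigma}}$ and let $D=\dim\ol{X}$ (a fixed function of the type), so $\log_2|S|\le D\log_2 q+O(1)$. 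On the other hand the LSZ bound, with the small-rank refinements, gives $\log_2 n\ge c\,\ell\log_2 q$ for an absolute constant $c>0$, \emph{provided} $S$ is not one of $\Lg_2(7),\Lg_2(8),\Lg_2(9),\Lg_3(4),\Ug_4(2),\Ug_4(3),\PSp_6(2),\PSp_6(3),\POmega^+_8(2)$. Since $D=O(\ell^2)$, we obtain $2(\log_2 n)^2\ge 2c^2\ell^2(\log_2 q)^2\ge D\log_2 q$ once $q$ or $\ell$ exceeds an explicit threshold; the finitely many remaining pairs (type, $q$) are checked by hand, and those on which the inequality fails are exactly the groups on the exclusion list.

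The main obstacle is not conceptual but is the bookkeeping needed to make the exclusion list \emph{exact}: one must use the sharp values of $R_p(S)$ in the small-rank and small-characteristic cases, where the generic LSZ estimate is far from tight and the minimal dimension depends on arithmetic conditions on $q$ and $p$ — the Weil representations of $\PSp_{2m}(q)$ and $\SU_m(q)$, the $(q\mp 1)$-dimensional representations of $\Lg_2(q)$, the exceptionally small modules of $\Lg_3(q)$, $\Ug_4(q)$, $\PSp_6(q)$ and of $\POmega^+_8(2)$ (with triality) — and then confirm both that the inequality holds for every group not listed and fails for every group listed. Once this case analysis, together with the handful of direct sporadic and small alternating verifications, is carried out, the lemma follows.
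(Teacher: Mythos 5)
Your proposal matches the paper's proof in essence: the paper likewise deduces both bounds by feeding the Landazuri--Seitz lower bounds on $R_p(S)$ (with the sharp values in the small-rank cases) into the known orders $|S|$ and $|\Out(S)|$ and checking by inspection, with the excluded alternating, sporadic and small Lie-type groups arising exactly as in your case analysis. The only difference is one of presentation -- the paper compresses the bookkeeping you outline into a citation of \cite{LandSeitz} plus ``inspection'' -- so your plan is the same argument spelled out in more detail.
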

\begin{proof}
Lower bounds for the values of $R_p(S)$ are given in \cite{LandSeitz}. The upper bounds in the lemma can then be deduced by inspection of these values and the orders of the outer automorphism groups of the finite simple groups.
\end{proof}
Notice that $|\Out(S)|= 6\log_2{R_p(S)}$ for $S=\Lg_3(4)$, so the first part of Lemma \ref{lem:RepSimple} is best possible.

We are now ready to prove Theorem \ref{thm:MainTheoremr}.
\begin{proof}[Proof of Theorem \ref{thm:MainTheoremr}] Let $X$ and $Y$ be subgroups of $\GL_n(\mathbb{F})$ with $X/Y\cong S$. We need to prove that $|X/Y|\le 2^{2(\log_2{n})^2+\log_2{n}}$.
We will first reduce to the case where $Y\le \Frat(X)$ and $X$ acts irreducibly on the natural $\GL_n(\mathbb{F})$-module $V$. 
Indeed, let $G$ be a subgroup of $X$ which is minimal with the property that $GY=X$. Then $G\cap Y\le \Frat(G)$, and $G/G\cap Y\cong X/Y$. Thus, by replacing $X$ by $G$, we may assume that $Y\le \Frat(X)$. Note also that since $O_p(X)\le Y$ and $X/O_p(X)$ embeds as a completely reducible subgroup of $\GL_n(\mathbb{F})$, we may assume that $O_p(X)=1$ and that $X$ acts completely reducibly on $V$. Next, let $W$ be an irreducible $X$-submodule of $V$ on which $X$ acts non-trivially, and write $X^W$ for the image of the induced action of $X$ on $W$. Let $\mathrm{Ker}_X(W)$ be the kernel of the action of $X$ on $W$. If $Y^W=X^W$, then $X=Y\mathrm{Ker}_X(W)$. But since $Y\le \Frat(X)$, this implies that $\mathrm{Ker}_X(W)=X$, contradicting our choice of $W$. Thus, we must have $Y^W\neq X^W$, and hence $X^W/Y^W\cong X/Y$, since $X^W/Y^W$ is a factor group of $X/Y$. Therefore, we may replace $X$ with $X^W$ and assume that $X$ is irreducible, as claimed.   

So assume that $X$ acts irreducibly on $V$, and suppose first that this action is imprimitive. Let $V=V_1\oplus\hdots\oplus V_t$ be an imprimitivity decomposition of $V$, and set $\Sigma:=\{V_1,\hdots,V_t\}$. Let $K:=\mathrm{Ker}_X(\Sigma)$ be the kernel of the action of $X$ on $\Sigma$. Then since $Y\le \Frat(X)$ we have (as above) $YK\neq X$. Thus, $X/Y\cong X^{\Sigma}/Y^{\Sigma}\le \Sym_t$. 
It follows from Lemma \ref{lem:PermSimple} that $|X/Y|$ is bounded above by $P(X/Y)^{2\log_2{P(X/Y)}}$. Hence, since $n\geq t\geq P(X/Y)$, we get $|X/Y|\le n^{2\log_{2}{n}}= 2^{2(\log{n})^2}$. This gives us what we need.  

Thus, we may assume that $X$ acts primitively on $V$. Then Corollary \ref{cor:PrimSimpleCor} implies that one of the following holds:
\begin{enumerate}[\upshape(i)]
    \item There exists a prime power divisor $r^m$ of $n$ such that $X/Y$ is a section of $\Sp_{2m}(r)$; or
    \item There exists a divisor $s^t$ of $n$, with $s\geq 2$, such that either:
    \begin{enumerate}[\upshape(a)]
        \item $X/Y$ has a projective irreducible representation of dimension $s$ over $\ol{\mathbb{F}}$; or
        \item $X/Y$ is a section of $\Sym_t$.
    \end{enumerate}
\end{enumerate}
If case (i) holds, then $|X/Y|\le |\Sp_{2m}(r)|\le r^{m(2m+1)}\le n^{2\log_2{n}+1}=2^{2(\log_2{n})^2+\log_2{n}}$; while
if (ii)(b) holds, then the same argument as in the imprimitive case above shows that $|X/Y|\le 2^{(\log_2{n})^2}$. So assume that case (ii)(a) holds. Then $n\geq R_{p}(X/Y)$. The result then follows from Lemma \ref{lem:RepSimple}.
\end{proof}

\section{The proof of Theorem \ref{thm:MainTheoremPrimLin}}\label{sec:PrimLin}
In this section, we prove Theorems \ref{thm:MainTheoremPrimLin}. With the function $M_p$ continuing to serve as our main tool, our approach to proving Theorem \ref{thm:MainTheoremPrimLin} will be to derive upper bounds on $M_p(G)$, for the primitive linear group $G$ in question. From the statement of the bound in the theorem, and with Corollary \ref{cor:PrimMpsCor} in mind, it will come as no surprise to the reader that the bulk of our work will be done in the groups $G_a/C_{G_a}(X)$ for $X\in\mathcal{X}$, where $G_a$ is as in Lemma \ref{lem:qlin} and $\mathcal{X}$ is as in Corollary \ref{cor:PrimMpsCor}.

We begin with those $X$ of the form $X=O_r(G_a)\not\le Z(G_a)$.
\begin{prop}\label{prop:basicI}
Let $q$ be a power of a prime $p$, and fix a prime divisor $r$ of $q-1$. Suppose that $H$ is a group of shape $N.L$, where $N\unlhd H$ is elementary abelian of order $r^{2m}$, and $L\le\Sp_{2m}(r)$ is completely reducible, acting naturally on $N$. Then $M_p(H)\le r^m(q^{r^m}-1)/(q-1)$, except in the following cases:
\begin{enumerate}[\upshape(i)]
   \item $(r,m,q)=(3,1,7)$ and $L=\Sp_2(3)$;
   \item $(r,m,q)=(2,1,5)$ and $L\in\{\Sp_2(2)',\Sp_2(2)\}$;
    \item $(r,m)=(2,1)$, $q\in\{7,11\}$, and $L=\Sp_2(2)$;
   \item $(r,m)=(2,2)$ and either:
   \begin{enumerate}[\upshape(a)]
       \item $q=3$ and $L\in\{C_5\rtimes C_4,\Sp_4(2)\}$;
       \item $q=5$ and $L=\Sp_2(2)\wr \Sym_2$;
       \item $q=7$ and $L\in\{\Alt_5(\text{two }\Sp_4(2)\text{-classes}),\Sp_4(2)',\Sp_4(2)\}$; or
       \item $q\in\{11,13\}$ and $L=\Sp_4(2)$.
   \end{enumerate}
\end{enumerate} 
\end{prop}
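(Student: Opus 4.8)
\emph{Setting up.} The first step is to strip off the normal subgroup $N\cong r^{2m}$ using Corollary \ref{cor:MpCor}. Since $r\mid q-1$ we have $r\neq p$, so each of the $2m$ composition factors of $N$ is a cyclic $p'$-group $C_r$ with $M_p(C_r)=r$, whence $M_p(N)=r^{2m}$; as $H/N\cong L$, Corollary \ref{cor:MpCor}(i) then gives $M_p(H)=r^{2m}M_p(L)$. Thus the claimed inequality $M_p(H)\le r^m(q^{r^m}-1)/(q-1)$ is \emph{equivalent} to
\[
M_p(L)\ \le\ \frac{q^{r^m}-1}{r^m(q-1)} .
\]
Since $r\mid q-1$ forces $q\ge r+1\ge 3$, the right-hand side is at least $q^{r^m-1}/r^m\ge 3^{r^m-1}/r^m$, which for fixed $r$ grows doubly exponentially in $m$. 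From now on everything takes place inside $L\le\Sp_{2m}(r)$.

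\emph{The generic bound.} For a first crude estimate I would use only that $M_p(L)=\prod_S M_p(S)\le\prod_S|S|=|L|\le|\Sp_{2m}(r)|<r^{2m^2+m}$, the products running over the composition factors $S$ of $L$. Comparing with the lower bound above and using $q>r$, one checks that the required inequality holds whenever $(2m^2+2m)\log r\le (r^m-1)\log q$; this fails only when $r^m$ (and with it $|\Sp_{2m}(r)|$) is small, so it leaves just a short explicit list of triples $(r,m,q)$ — those with $r\in\{2,3,5,7,11,13\}$, $m$ small, and $q$ close to its minimum $r+1$ — still to be checked.

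\emph{The remaining triples.} For each surviving $(r,m,q)$ I would go through the completely reducible subgroups $L$ of the (now small) symplectic group $\Sp_{2m}(r)$ that are large enough to be candidates, using Corollary \ref{cor:MpCor}(iii) to pass from a reducible $L$ to its action on the irreducible summands of $\FF_r^{2m}$, and then compute $M_p(L)=\prod_S M_p(S)$ from the composition factors $S$. A cyclic $S$ contributes $1$ or $|S|$; a nonabelian simple section $S$ of $\Sp_{2m}(r)$ is handled by Theorem \ref{thm:MainTheoremp} when $S\in\mathrm{Lie}(p)$, by Theorem \ref{thm:AltThm} when $S$ is alternating, and otherwise by Theorem \ref{thm:MainTheoremr} together with direct inspection of centralizer orders of $p$-elements in the Atlas \cite{WebAtlas} — the latter being unavoidable because several of the small groups occurring here, such as $A_6=\Lg_2(9)$ and $\PSp_6(2)$, lie in the exception list of Theorem \ref{thm:MainTheoremr}, and because $M_p(S)$ is usually far smaller than $|S|$ when $p$ divides $|S|$. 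Comparing $M_p(L)$ with $(q^{r^m}-1)/(r^m(q-1))$ for each admissible $q$ then isolates exactly the quadruples $(r,m,q,L)$ for which the inequality fails, and these are precisely the cases listed in (i)--(iv).

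\emph{The main obstacle.} The genuine work is this final case analysis: identifying the relevant completely reducible subgroups of $\Sp_2(2),\Sp_4(2),\Sp_6(2),\Sp_8(2),\Sp_2(3),\Sp_4(3),\Sp_2(5),\Sp_2(7),\Sp_2(11),\Sp_2(13)$ (together with a handful of others), and pinning down $M_p(L)$ for each — in particular the centralizer orders of $p$-elements in the quasisimple composition factors that arise in cross characteristic and are not covered by Theorem \ref{thm:MainTheoremr}. It is a finite but delicate verification, and it is where the list of exceptional triples is generated.
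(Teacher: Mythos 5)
Your proposal is correct and follows essentially the same route as the paper: reduce to $M_p(H)=r^{2m}M_p(L)$ via Corollary \ref{cor:MpCor}, dispose of all but finitely many triples $(r,m,q)$ by comparing the crude bound $M_p(L)\le|\Sp_{2m}(r)|\le r^{2m^2+m}$ with $q^{r^m-1}\le r^m(q^{r^m}-1)/(q-1)$, and then settle the surviving small cases by an explicit computation of $M_p(L)$ over the completely reducible subgroups $L$ of the small symplectic groups, which is exactly how the paper generates the exception list (the paper simply records this finite verification rather than detailing it, as you do via Theorems \ref{thm:MainTheoremp}, \ref{thm:AltThm}, \ref{thm:MainTheoremr} and the Atlas).
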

\begin{proof}
Note first that since $r$ divides $q-1$ and $|L|\le |\Sp_{2m}(r)|\le r^{2m^2+m}$, we have $M_p(H)\le |H|\le r^{2m^2+3m}$. Since $q>r\geq 2$, we readily see that this is at most $q^{r^m-1}$ if $m\geq 6$.
Moreover, for $1\le m\le 5$, the quantity $r^{1+2m}|\Sp_{2m}(r)|$ is bounded above by $q^{r^m-1}$ if $q>23$ (again, using the fact that $q>r$). Since $q^{r^m-1}<r^m(q^{r^m}-1)/(q-1)$, this gives us what we need in these cases. 

Suppose now that $m\le 5$, and that $q\le 23$. Since $r\mid q-1$, it is easy to check that $r^{1+2m}|\Sp_{2m}(r)|$ is bounded above by $q^{r^m-1}$ for all choices of $r,q,m$ (with the above restrictions), except when one of the following holds:
\begin{itemize}
    \item $(r,m,q)\in\{(3,1,4),(3,2,4),(3,1,7),(3,1,13)\}$;
    \item $(r,m)=(2,1)$ and $3\le q\le 23$ is odd;
    \item $(r,m)=(2,2)$ and $3\le q\le 19$ is odd;
    \item $(r,m)=(2,3)$ and $3\le q\le 13$ is odd;
    \item $(r,m)=(2,4)$ and $3\le q\le 7$ is odd; or
    \item $(r,m,q)=(2,5,3)$;
\end{itemize}
In each of the cases above, we can compute $M_p(L)$ for each completely reducible subgroup $L$ of $\Sp_{2m}(r)$. This gives us the precise value of $M_p(H)=M_p(N)M_p(L)=r^{2m}M_p(L)$. After doing this, we then see that the cases (i)--(iv) in the statement of the proposition are the only ones in which $M_p(H)>r^m(q^{r^m}-1)/(q-1)$. 
\end{proof}


We next move on to the analogous result for those $X$ from Corollary \ref{cor:PrimMpsCor} with $X$ of the form $X=T(S)$, for a quasisimple group $S$. It will come as no surprise to the reader that when computing $M_p(G_a/C_{G_a}(X))$, the cases $S\in\mathrm{Lie}(p)$ and $S\not\in\mathrm{Lie}(p)$ are substantially different. For this reason, we prove two separate propositions when dealing with these cases. 
Before distinguishing our arguments in this way, we prove the following lemma, which will be useful in both cases.
\begin{lemma}\label{lem:bothcases}
Let $q$ be a power of a prime $p$, and fix a finite simple group $S$. Suppose that $s$ is the dimension of an absolutely irreducible $\mathbb{F}_q[S]$-module $W$, and that $H$ is a subgroup of the wreath product $\Stab_{\Aut(S)}(W)\wr\Sym_t$ containing $S^t$.
\begin{enumerate}[\upshape(i)]
    \item There exists subgroups $A_i$ ($1\le i\le t$) of $\Stab_{\Aut(S)}(W)$ such that
    \begin{align*}
    M_p(H)\le M_p(S)^t\left(\prod_{i=1}^tM_p(A_i/S)\right)M_p(H/N)\le M_p(S)^t\left(\prod_{i=1}^tM_p(A_i/S)\right)\min\{t!,p^{t-1}\}.
\end{align*}
    \item Suppose that $M_p(A_i)\le q^{s-1}$ whenever $A_i$ is a subgroup of $\Stab_{\Aut(S)}(W)$ containing $S$. Then $M_p(H)\le q^{s^t-1}$.
    \item Suppose that $M_p(A_i)\le (q^{s}-1)/(q-1)$ whenever $A_i$ is a subgroup of $\Stab_{\Aut(S)}(W)$ containing $S$. Then $M_p(H)\le (q^{s^t}-1)/(q-1)$. 
\end{enumerate}
\end{lemma}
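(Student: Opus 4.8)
The plan is to first establish (i), which is essentially bookkeeping on the structure of the wreath product $\Stab_{\Aut(S)}(W)\wr\Sym_t$, and then deduce (ii) and (iii) by feeding the hypotheses into the bound from (i). For (i), I would let $N := H\cap \Stab_{\Aut(S)}(W)^t$ be the intersection of $H$ with the base group, so that $H/N$ embeds in $\Sym_t$, and by Corollary \ref{cor:MainTheoremPermCor} we have $M_p(H/N)\le h_p(t)\le\min\{t!,p^{t-1}\}$ (noting $h_p(t)\le t!$ trivially since $h_p$ bounds a centralizer order in $\Sym_t$, and $h_p(t)\le p^{t-1}$ is recorded in that corollary). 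By Corollary \ref{cor:MpCor}(i), $M_p(H)=M_p(N)M_p(H/N)$, so it remains to bound $M_p(N)$. Since $N$ is subnormal in $\Stab_{\Aut(S)}(W)^t$ and contains $S^t$, its projection $A_i$ to the $i$-th coordinate is a subgroup of $\Stab_{\Aut(S)}(W)$ containing $S$; thus $N$ is a subdirect subgroup of $A_1\times\cdots\times A_t$, and Corollary \ref{cor:MpCor}(iii) gives $M_p(N)\le\prod_{i=1}^t M_p(A_i)$. Finally, each $A_i$ has a normal subgroup $S$, so Corollary \ref{cor:MpCor}(i) yields $M_p(A_i)=M_p(S)M_p(A_i/S)$, and combining these gives the first displayed inequality; the second follows from $M_p(H/N)\le h_p(t)\le\min\{t!,p^{t-1}\}$.

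For part (ii), assume $M_p(A)\le q^{s-1}$ for every subgroup $A$ of $\Stab_{\Aut(S)}(W)$ containing $S$. Then in the notation of (i), $M_p(S)^t\prod_i M_p(A_i/S) = \prod_i M_p(A_i)\le q^{t(s-1)}$, so $M_p(H)\le q^{t(s-1)}\cdot p^{t-1}$ using the $p^{t-1}$ form of the permutation bound. I would then check that $q^{t(s-1)}p^{t-1}\le q^{s^t-1}$, i.e. that $t(s-1)+\log_q p\cdot(t-1)\le s^t-1$. Since $\log_q p\le 1$, it suffices to show $t(s-1)+(t-1)\le s^t-1$, equivalently $ts\le s^t$, which holds for all $s\ge 2$ and $t\ge 1$ (and also for $s=1$, where $W$ being a nontrivial module forces the relevant groups to behave, but in fact the hypothesis $M_p(A)\le q^{s-1}=1$ already makes $M_p(H)\le p^{t-1}\le q^{t-1}=q^{s^t-1}$ trivially when $s=1$). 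This is the elementary inequality $ts \le s^t$, which I would dispatch by induction on $t$ or by noting $s^t = s\cdot s^{t-1}\ge s\cdot t$ using $s^{t-1}\ge t$ for $s\ge 2$.

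For part (iii), assume $M_p(A)\le (q^s-1)/(q-1)$ for every such $A$. Here I would \emph{not} route through the $p^{t-1}$ bound, since the target $(q^{s^t}-1)/(q-1)$ is comparatively tight; instead I use $M_p(H/N)\le t!$ together with $\prod_i M_p(A_i)\le\left((q^s-1)/(q-1)\right)^t$, giving $M_p(H)\le t!\left((q^s-1)/(q-1)\right)^t$. The remaining task is the numerical inequality $t!\left(\frac{q^s-1}{q-1}\right)^t\le\frac{q^{s^t}-1}{q-1}$. The main obstacle is making this clean: the natural approach is to write $\frac{q^s-1}{q-1}=1+q+\cdots+q^{s-1}< q^s$ and note $\frac{q^{s^t}-1}{q-1}> q^{s^t-1}$, reducing to $t!\,q^{st}\le q^{s^t-1}$, i.e. $st + \log_q(t!)\le s^t-1$; since $t!\le t^{t-1}\le q^{t-1}$ (as $q\ge 2$ and... actually one needs $t!\le q^{(t-1)}$ which can fail for large $t$ relative to $q$), a safer route is $\log_q(t!)\le t\log_q t$, reducing to $st + t\log_q t\le s^t-1$. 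For $s\ge 2$ this follows from $s^t\ge 2^t$ growing faster than the left side; I would handle small $t$ (say $t\le 3$) by hand and induct for $t\ge 4$, and treat $s=1$ separately where the hypothesis forces $M_p(A)=1$ hence $M_p(H)\le t!$, while $(q^{s^t}-1)/(q-1)=1$ — so in fact $s=1$ cannot arise here unless $t!=1$, which is consistent. I expect this last numerical verification in (iii), particularly pinning down the $t!$ versus $q$-power comparison uniformly, to be the fiddliest part, though it is entirely elementary.
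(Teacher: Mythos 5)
Your parts (i) and (ii) are correct and essentially the paper's own argument: take $N=H\cap\Stab_{\Aut(S)}(W)^t$, note that $N$ is a subdirect subgroup of $A_1\times\cdots\times A_t$ with $S\le A_i\le\Stab_{\Aut(S)}(W)$, apply Corollary \ref{cor:MpCor} together with Corollary \ref{cor:MainTheoremPermCor}, and for (ii) conclude via $q^{t(s-1)}q^{t-1}=q^{st-1}\le q^{s^t-1}$.

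The gap is in (iii). The numerical inequality you reduce to, $t!\left(\frac{q^s-1}{q-1}\right)^t\le\frac{q^{s^t}-1}{q-1}$, is false: at $(q,s,t)=(2,2,2)$ the left side is $2\cdot 3^2=18$ while the right side is $15$. So the plan ``check $t\le 3$ by hand and induct'' cannot be carried out as written — the hand check at $t=2$ fails — and your intermediate reduction to $st+t\log_q t\le s^t-1$ is lossier still (it already fails for $s=t=2$ and every $q$, and for $(q,s,t)=(2,2,3)$, even though the original inequality holds in those cases). To repair this you must either (a) observe that the offending parameters cannot occur: for $s\ge 2$ the module $W$ is faithful, so $S$ would be a nonabelian simple subgroup of $\GL_2(2)\cong\Sym_3$, which is impossible, so $(q,s)=(2,2)$ is vacuous; or (b) replace the crude bound $M_p(H/N)\le t!$ by $M_p$ of the relevant symmetric group in the bad cases, e.g. $M_p(\Sym_2)=(2,p-1)$, which equals $1$ when $p=2$ and gives $(2,p-1)\left(\frac{q^2-1}{q-1}\right)^2\le\frac{q^4-1}{q-1}$ for all $q$. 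For comparison, the paper proves (iii) by the same route as (ii), i.e. with the factor $q^{t-1}$, notes that $\left(\frac{q^s-1}{q-1}\right)^tq^{t-1}\le\frac{q^{s^t}-1}{q-1}$ fails only when $s=t=2$ or $(q,s,t)=(2,2,3)$, and settles exactly those cases by the sharper bound in (b). Your $t!$ route works for $q\ge 3$ but needs one of these fixes at $q=2$.
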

\begin{proof}
For ease of notation, set $n:=s^t$ and $A:=\Stab_{\Aut(S)}(W)$. We note first that by the hypothesis of the lemma, the intersection $N:=H\cap A^t$ of $H$ with the base group of $A\wr\Sym_t$ is a subdirect subgroup in a direct product $A_1\times\hdots\times A_t$, where each $A_i$ is a subgroup of $A$ containing $S$. It then follows from Corollary \ref{cor:MpCor} that
\begin{align}\label{lab:Stbound}
    M_p(H) &\le\left(\prod_{i=1}^tM_p(A_i)\right)M_p(H/N)\\ \nonumber  
    &\le M_p(S)^t\left(\prod_{i=1}^tM_p(A_i/S)\right)M_p(H/N).
\end{align}
Note also that $M_p(H/N)\le h_p(n)\le p^{t-1}$ by Corollary \ref{cor:MainTheoremPermCor}. Since $|H/N|\le t!$, part (i) follows.

Part (ii) follows from the first upper bound in (\ref{lab:Stbound}) and the fact that $(q^{s-1})^tq^{t-1}=q^{st-1}\le q^{{s^t}-1}$. So we just need to prove (iii). It is a routine exercise to check that if $q$ is a prime power and  $s\geq 2$, $t\geq 1$ are positive integers, then 
\begin{align*}
   \left(\dfrac{q^s-1}{q-1}\right)^tq^{t-1}\le \dfrac{q^{s^t}-1}{q-1}
\end{align*}
unless either $s=t=2$ or $q=s=2$ and $t=3$. If we are not in these exceptional cases, then the result follows from (\ref{lab:Stbound}). Suppose now that $s=t=2$. Then since $M_p(\Sym_2)=(2,p-1)$, the first bound in (\ref{lab:Stbound}) gives $M_p(H)\le (2,p-1)M_p(A_1)M_p(A_2)\le (2,p-1)(q^4-2q^2+1)/(q-1)^2$. It is clear that this is at most $(q^4-1)/(q-1)$ for all choices of $q$. The argument in the case $q=s=2$ and $t=3$ is entirely similar.    
\end{proof}

We can now derive upper bounds on $M_p(G_a/C_{G_a}(X))$ for those $X$ from Corollary \ref{cor:PrimMpsCor} with $X$ of the form $X=T(S)$, for a quasisimple group $S\not\in\mathrm{Lie}(p)$.
\begin{prop}\label{prop:basicII}
Let $q$ be a power of a prime $p$, and fix a finite simple group $S\not\in\mathrm{Lie}(p)$. Suppose that $s$ is the dimension of an absolutely irreducible $\mathbb{F}_q[S]$-module $W$, and that $H$ is a subgroup of the wreath product $\Stab_{\Aut(S)}(W)\wr\Sym_t$ containing $S^t$. Then $M_p(H)\le (s^t)_{p'}(q^{s^t}-1)/(q-1)$, except in the following cases:
\begin{enumerate}[\upshape(i)]
    \item $H=S=\Alt_5$, $(s,t)=(2,1)$, and $q\in\{11,19\}$; or 
    \item $H=\Alt_5\wr\Sym_2$, $(s,t)=(2,2)$, and $q=11$.
\end{enumerate}
\end{prop}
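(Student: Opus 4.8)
The plan is to apply Lemma \ref{lem:bothcases}, which reduces the estimate on $M_p(H)$ to understanding $M_p(A)$ for subgroups $A$ of $\Stab_{\Aut(S)}(W)$ containing $S$, together with a permutational contribution that is already controlled by Corollary \ref{cor:MainTheoremPermCor}. Specifically, by part (iii) of Lemma \ref{lem:bothcases}, it suffices to prove that $M_p(A)\le (q^s-1)/(q-1)$ for every such $A$, and then the bound $M_p(H)\le (q^{s^t}-1)/(q-1)\le (s^t)_{p'}(q^{s^t}-1)/(q-1)$ follows immediately (with the exceptional cases $s=t=2$ or $q=s=2$, $t=3$ handled by the direct estimates already recorded in the proof of Lemma \ref{lem:bothcases}). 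So the heart of the matter is the single-factor estimate.

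To bound $M_p(A)$ I would first note that $M_p(A)=M_p(S)M_p(A/S)$ by Corollary \ref{cor:MpCor}(i), that $A/S$ is soluble of order dividing $|\Out(S)|$ by Schreier's conjecture, and hence $M_p(A/S)\le |\Out(S)|$. Next, since $S\not\in\mathrm{Lie}(p)$ and $S$ has a faithful projective representation of dimension $s$ over $\ol{\mathbb{F}_p}$ (namely $W$), we have $s\ge R_p(S)$. Lemma \ref{lem:RepSimple} then gives $|\Out(S)|\le 6\log_2 s$ and, provided $S$ is not an alternating group and not one of the finitely many small exceptions listed there, $|S|\le 2^{2(\log_2 s)^2+\log_2 s}$. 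Combining, $M_p(A)\le |S|\,|\Out(S)|\le 6(\log_2 s)2^{2(\log_2 s)^2+\log_2 s}$, and one checks this is at most $(q^s-1)/(q-1)\ge q^{s-1}\ge p^{s-1}\ge 2^{s-1}$ once $s$ is moderately large (roughly $s\ge 20$, say); the comparison $2(\log_2 s)^2+\log_2 s+\log_2(6\log_2 s)\le s-1$ holds for all large $s$ and can be verified numerically at the boundary. For the remaining small values of $s$ (below the threshold just described), and for the alternating and exceptional cases excluded from Lemma \ref{lem:RepSimple}, I would argue case by case: for $S=\Alt_m$ we have $s=R_p(\Alt_m)\ge m-2$ (or $m-1$), and $M_p(\Alt_m)\le p^{m/2}$ by Theorem \ref{thm:AltThm}, while $M_p(A/S)\le 2$; since $p\ge 2$ and $q\ge p$, one checks $2p^{m/2}\le (q^{m-2}-1)/(q-1)$ for all $m\ge 7$ or so, and the finitely many small $m$ are handled directly. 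The listed sporadic and cross-characteristic exceptions ($M_{12}$, $M_{22}$, $M_{24}$, $J_2$, $\mathrm{Suz}$, $Co_1$, $Co_2$, $\Lg_2(7)$, $\Lg_2(8)$, $\Lg_2(9)$, $\Lg_3(4)$, $\Ug_4(2)$, $\Ug_4(3)$, $\PSp_6(2)$, $\PSp_6(3)$, $\POmega^+_8(2)$) each have a specific known minimal cross-characteristic dimension $R_p(S)$ and a specific order, so for each I would directly compare $|S|\,|\Out(S)|$ with $(q^{R_p(S)}-1)/(q-1)$, using in addition that $q$ is a power of $p\ne r$ (where $S\in\mathrm{Lie}(r)$) so $q\ge$ the relevant small primes; the genuine failures that survive are exactly the two listed cases with $S=\Alt_5$ in dimension $2$ over $\mathbb{F}_{11}$ and $\mathbb{F}_{19}$ (where $2.\Alt_5\le\GL_2(q)$ forces $M_p$ close to $|S|$), giving exception (i), and its wreathed square $\Alt_5\wr\Sym_2$ over $\mathbb{F}_{11}$, giving exception (ii) via the $s=t=2$ branch of Lemma \ref{lem:bothcases}(iii).

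The main obstacle will be the bookkeeping in the small-$s$ regime and the exceptional list: one must be careful that a failure of the single-factor bound $M_p(A)\le (q^s-1)/(q-1)$ does not automatically propagate to a failure for the wreath product $H$, because the permutational factor $\min\{t!,p^{t-1}\}$ and the crude inequality $((q^s-1)/(q-1))^t q^{t-1}\le (q^{s^t}-1)/(q-1)$ lose quite a lot of room for $t\ge 2$; so for each near-miss in a single factor I would recheck whether the wreathed version still satisfies the stated conclusion, and only promote it to a genuine exception if it does not. Conversely, for the $\Alt_5$ cases I must confirm that the wreath product with $t\ge 3$, or with larger $q$, does satisfy the bound, so that (i) and (ii) really are the complete list. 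This is exactly the same flavour of finite check performed in the proof of Proposition \ref{prop:basicI}, and I would organize it the same way: establish a clean threshold beyond which the asymptotic inequality from Lemma \ref{lem:RepSimple} wins, then enumerate the finitely many pairs $(S,q)$ below the threshold (together with the excluded groups) and compute $M_p$ directly in each.
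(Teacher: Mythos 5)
Your overall strategy is essentially the paper's: reduce to the single-factor case via Lemma \ref{lem:bothcases}, handle large $s$ by combining the order bound of Theorem \ref{thm:MainTheoremr} with $|\Out(S)|\le 6\log_2 s$ from Lemma \ref{lem:RepSimple}, treat alternating groups via Theorem \ref{thm:AltThm}, and dispose of the rest by a finite check that surfaces the two $\Alt_5$ exceptions. (The paper works with the per-factor bound $q^{s-1}$ and Lemma \ref{lem:bothcases}(ii) rather than $(q^s-1)/(q-1)$ and (iii); the difference is immaterial. Your caution that single-factor near-misses need not propagate to wreathed exceptions is exactly how the genuine near-misses are absorbed: $S=\Alt_5$ with $s=2$ and $q\in\{29,31,41,59\}$, and $S=\Lg_2(7)$ with $s=3$, $q=11$, fail your per-factor target but satisfy the stated bound at $t=1$ thanks to the extra $s_{p'}$ slack, and cause no trouble for $t\ge 2$.)

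The one concrete gap is your threshold. In the worst case $q=2$ you need $6(\log_2 s)\,2^{2(\log_2 s)^2+\log_2 s}\le 2^{s-1}$, i.e.\ $2(\log_2 s)^2+\log_2 s+\log_2(6\log_2 s)\le s-1$, and this first holds at about $s=103$ (the threshold the paper uses); at $s=20$ it fails by a factor of roughly $2^{27}$, and it still fails at $s=100$. So the ``finitely many small values of $s$'' regime is $4\le s\le 102$, not $s\lesssim 20$, and covering it is the bulk of the real work: one needs the Hiss--Malle tables \cite{HM} to enumerate all quasisimple $S\notin\mathrm{Lie}(p)$ admitting an absolutely irreducible projective representation of dimension at most $102$, a comparison of $|\Aut(S)|$ with $p_0^{s-1}$ for the smallest prime $p_0$ in whose characteristic $S$ is not of Lie type, and then direct computation of $M_p(S)$ for the several dozen survivors (all $\Lg_2(q_1)$ with $q_1\le 31$, various small-rank classical and exceptional groups, many sporadics, $\Alt_m$ for $5\le m\le 8$), in a few cases consulting the Brauer character tables \cite{BrAtlas} to rule out the smallest dimension over the particular field (e.g.\ $\Lg_2(7)$ over $\mathbb{F}_5$, $\Lg_2(9)$ over $\mathbb{F}_7$ and $\mathbb{F}_{11}$). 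Your sketch only addresses the alternating groups and the sixteen groups excluded from Theorem \ref{thm:MainTheoremr}, which is far short of this list. A smaller point: the formula $R_p(\Alt_m)\in\{m-2,m-1\}$ holds only for $m\ge 9$ (\cite[Proposition 5.3.7]{KL}); the degrees $5\le m\le 8$ have smaller-dimensional representations and must be folded into the small-$s$ analysis rather than handled by that formula.
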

\begin{proof}
For ease of notation, set $n:=s^t$ and $A:=\Stab_{\Aut(S)}(W)$. We will in fact prove that $M_p(H)\le q^{s^t-1}$, except in the following cases:
\begin{enumerate}[\upshape(1)]
    \item $H=S=\Alt_5$, $(s,t)=(2,1)$, and $q\in\{11,19,29,31,41,59\}$;
    \item $S=\Alt_5$, $(s,t)=(2,2)$, and $q\in\{11,19\}$; or
    \item $H=S=\Lg_2(7)$, $(s,t)=(3,1)$, and $q=11$.
\end{enumerate}
Since $q^{s^t-1}<(s^t)_{p'}(q^{s^t}-1)/(q-1)$, this will give us what we need in all but the three cases listed above. It is then easy to check that $M_p(H)\le (s^t)_{p'}(q^{s^t}-1)/(q-1)$ in these cases, except when either $H=S=\Alt_5$, $(s,t)=(2,1)$, and $q\in\{11,19\}$; or $H=\Alt_5\wr\Sym_2$, $(s,t)=(2,2)$, and $q=11$.

Suppose first that $s=2$. It is well-known that the only nonabelian finite simple group $S\not\in\mathrm{Lie}(p)$ with a projective irreducible representation in characteristic $p$ is $\Alt_5$, and in this case, $A=S$ (one can see this from \cite[Table 8.2]{BHRD}, for example). Moreover, $p\not\in\{2,5\}$, since $\Alt_5\cong \Lg_2(4)\cong\Lg_2(5)$. Since $\GL_2(7)$ has no subgroup isomorphic to a cover of $\Alt_5$, we deduce that $q\geq 9$. Assume that $t=1$, so that $H=S$. If $p=3$, then we quickly compute that $M_3(H)=M_3(S)=3<q=q^{n-1}$. Suppose now that $p\geq 7$. Then $M_p(H)=M_p(S)=|S|$, so we get $M_p(H)\le q=q^{n-1}$ if and only if $q\geq |S|=60$. Using \cite[Table 2]{BHRD}, we see that if $p\geq 7$ and $q\le 60$, then $\Alt_5$ has a projective representation over $\mathbb{F}_{q}$ if and only if $q\in\{11,19,29,31,41,59\}$. This is the case listed (1) in the first paragraph above.

Continuing to assume that $s=2$, suppose next that $t>1$. Assume first that either $p=3$ or $q\geq 60$. Then $M_p(H)\le q^{2^t-1}=q^{s^t-1}$ by Lemma \ref{lem:bothcases}(ii).
Assume now that $7<q<60$ and $p\geq 7$. Then it follows from Lemma \ref{lem:bothcases}(i) that $M_p(H)\le 60^t\min\{p^{t-1},t!\}$. If $t\geq 3$, then this is less than or equal to $q^{2^t-1}=q^{n-1}$ for all of choices of $7<q<60$ (with $p\geq 7$). So assume that $t=2$. Then $M_p(H)\le 60^t2$ as above, and this is less than or equal to $q^{4-1}=q^{n-1}$ unless $q\in\{11,13,17,19\}$. However, we can see from \cite[Table 2]{BHRD} that among these values of $q$, $\Alt_5$ has a projective representation over $\mathbb{F}_{11}$ and $\mathbb{F}_{19}$ only. This is the exception numbered (2) in the first paragraph above.

We can analyse the case $s=3$ in an entirely similar way: From \cite{HM} we see that $S\in\{\Alt_6,\Alt_7\text{($q=5$)},\Lg_2(7)\}$. Furthermore, using \cite[Tables 8.3--8.6]{BHRD}, we have $A=S$ unless $S=\Alt_6$ and $q=5$, in which case $|A/S|\le 2$. Arguing as in the paragraphs above then gives the bound $M_p(H)\le q^{n-1}$, apart from when $t=1$, $S=\Lg_2(7)$, and $q=11$. 

We may assume, therefore, that $s>3$. By Lemma \ref{lem:bothcases}(ii), it suffices to complete the proof of the proposition in the case $t=1$. Thus we may assume, for the remainder of the proof, that $t=1$ (whence $H$ is almost simple). We need to prove that $M_p(H)\le q^{s-1}$. To this end, suppose first that if $S$ is an alternating group, then the degree is at most $8$. We will assume first that $s\geq 103$. Then one can quickly check that if either $S\in\{\Alt_m\text{ : }5\le m\le 27\}$, or if $S$ is one of the exceptions listed in Lemma \ref{lem:RepSimple}, then $|S|\le 2^{(\log_2{s})^2+\log_2{s}}$. Thus, $|S|\le 2^{2(\log_2{s})^2+\log_2{s}}$ in all of the cases under consideration, by Theorem \ref{thm:MainTheoremr}. Furthermore, $|H/S|\le |\Out(S)|\le 6\log_2{s}$ by Lemma \ref{lem:RepSimple}. It follows from Lemma \ref{lem:bothcases}(i) that $M_p(H)\le 2^{2(\log_2{s})^2+\log_2{s}}(6\log_2{s})$. Since $s\geq 103$, one can check that this is bounded above by $q^{s^t-1}=q^{n-1}$ for all values of $q\geq 2$. 

Continuing to assume that $S$ is not an alternating group of degree greater than $27$, suppose next that $4\le s\le 102$. The possibilities for the simple group $S$ are then given by \cite[Tables 2 and 3]{HM}. We can then simply go through each of these possibilities for $S$, and compute $|\Aut(S)|$. We also define the set $\pi:=\pi_S$ and the prime $p_0:=p_0(S)$ as follows: let $\pi$ be the set of primes for which $S\in\mathrm{Lie}(r_i)$. Then choose $p_0$ to be the smallest prime with $p_0\not\in \{r_i\}_i$. For example, if $S=\Lg_2(4)=\Lg_2(5)$, then $\pi=\{2,4\}$ and $p_0=3$. We then have $q\geq p_0$, since we are assuming that $S\not\in\mathrm{Lie}(p)$. Thus, if $|\Aut(S)|\le p_0^{s-1}$, then we have $M_p(H)\le |\Aut(S)|\le p_0^{s-1}\le q^{s-1}$. The groups $S$ which do not satisfy $|\Aut(S/Z(S))|\le p_0^{s-1}$, for some value of $s$, are as follows:
\begin{align}\label{lab:EX00}
&\Lg_2(q_1)\text{($q_1\le 31,q_1\neq 16$),}\PSp_4(q_1)\text{($q_1\le 7$),}\Lg_3(3),\PSp_6(3),\POmega_7(3),\Ug_4(2),\Ug_4(3),\\ \nonumber
&\Ug_5(2),\PSp_8(2),\PSp_8(3),G_2(2)'\text{($q=5$)},G_2(3),M_{11},M_{12},M_{22},M_{23},M_{24},J_1,J_2,\\ \nonumber
& J_3,\mathrm{Suz},HS,McL,Co_1,Co_2,Co_3,Fi_{22},Ru,\Alt_m\text{($5\le m\le 8$)}.  
\end{align}
The arguments to prove that $M_p(H)\le q^{n-1}$ are almost identical for each of the groups listed at (\ref{lab:EX00}), so we will just give the details for the $\Lg_2(q_1)$ cases.
So assume that $S\cong \Lg_2(q_1)$, for some $q_1\le 31$ with $q_1\neq 16$. Suppose first that $S=\Lg_2(4)=\Lg_2(5)\cong\Alt_5$. Then $\pi_S=\{2,5\}$, and we have $M_p(H)\le |\Aut(S)|=120\le q^{4-1}\le q^{s-1}$ if $q>3$. If $q=3$, then we calculate $M_3(S)=3$, so $M_3(H)\le 2M_3(S)\le 6<3^{4-1}<3^{s-1}$. The result follows. Suppose now $q_1\not\in\{4,5\}$. Then $n=s\geq \max\{4,R_p(S)\}$. By \cite{LandSeitz}, it follows that $n=s\geq 4$ if $q_1=9$, and $n=s\geq \max\{4,(q_1-1)/(q_1-1,2)\}$ otherwise. We then see that the inequality $|\Aut(S)|\le q^{n-1}$ holds for $q\geq q'$, where $q'$ is the value associated to $\Lg_2(q_1)$ in the second column of Table \ref{tab:L2Ex00}.

\begin{center}
\begin{tabular}{c|c}
  $S$   &  $q'$\\
  \hline
  $\Lg_2(7)$      &  $7$\\
  $\Lg_2(8)$      &  $4$\\
  $\Lg_2(9)$      &  $13$\\
  $\Lg_2(11)$      & $7$\\
  \hline
\end{tabular}
\quad
\begin{tabular}{c|c}
  $S$   &  $q'$\\
  \hline
  $\Lg_2(13)$      &  $5$\\
  $\Lg_2(17)$      &  $4$\\
  $\Lg_2(19)$      &  $4$\\
  $\Lg_2(23)$      &  $3$\\
  \hline
\end{tabular}
\quad
\begin{tabular}{c|c}
  $S$   &  $q'$\\
  \hline
  $\Lg_2(25)$      &  $3$\\
  $\Lg_2(27)$      &  $3$\\
  $\Lg_2(29)$      &  $3$\\
  $\Lg_2(31)$      &  $3$\\
  \hline
\end{tabular}
\begin{table}[ht!]
    \caption{$|\Aut(S)|\le q^{n-1}$ for $q\geq q'$ as above.}
    \label{tab:L2Ex00}
\end{table}
\end{center}

Thus, we just need to prove that $M_p(X)\le q^{n-1}$ whenever $q<q'$ and $p\not\in\pi_S$. We do this by computing the precise value of $M_p(S)$ in each case, and then applying the bound $M_p(H)\le M_p(S)M_p(\Out(S))$. For example, when $S=\Lg_2(11)$, we need to check that $M_p(H)\le q^{s-1}$ when $q\in\{2,3,4,5\}$. We have $M_2(S)=12$; $M_3(S)=6$; and $M_5(S)=5$. Also, $s\geq R_p(S)=(11-1)/2=5$. The computations above then give $M_p(S)\le q^{5-1}\le q^{n-1}$ in each case. This argument works in all cases, except when either $S=\Lg_2(7)$ and $q=5$; or $S=\Lg_2(9)\cong\Alt_6$ and $q\in\{7,11\}$. In each of these cases, we can use the Atlas of Brauer Characters \cite{BrAtlas} to check that since $s\geq 4$, we must in fact have $s\geq 5$. We then see that $M_p(H)\le |\Aut(S)|\le q^{5-1}$ in each case.

Finally, assume that $S$ is an alternating group of degree $m\geq 9$. Then $R_p(H)=m-2$ if $p\mid m$, and $R_p(H)=m-1$ otherwise (see \cite[Proposition 5.3.7]{KL}, for example). If $p\geq m$, then it follows that $M_p(H)\le |\Sym_m|\le 6p^{m-3}\le p^{m-2}\le q^{n-1}$. If $p<m$, then Theorem \ref{thm:AltThm} yields $M_p(H)\le 2q^{m/2}$. Since $m\geq 9$ implies that $m/2\le m-4$, we deduce that  $M_p(H)\le 2q^{m-4}\le q^{m-3}\le q^{n-1}$. This gives us what we need, and completes the proof.
\end{proof}

Continuing with our analysis of the function $M_p$ and its values on those $G_a/C_{G_a}(X)$ with $G_a$, $X$ as in Corollary \ref{cor:PrimMpsCor}, we next deal with the case $X=T(S)$ with $S\in\mathrm{Lie}(p)$.
\begin{prop}\label{prop:basicIIp}
Let $q=p^f$ be a power of a prime $p$, and fix a finite simple group $S\in\mathrm{Lie}(p)$. Write $\hat{S}$ for the Schur cover of $S$. Suppose that $s$ is the dimension of an absolutely irreducible $\mathbb{F}_q[\hat{S}]$-module $W$, and that $H$ is a subgroup of the wreath product $\Stab_{\Aut(S)}(W)\wr\Sym_t$ containing $S^t$. Suppose also that $W$ is not realizable over any proper subfield of $\mathbb{F}_q$. Then $M_p(H)\le (q^{s^t}-1)/(q-1)$, except in the following cases:
\begin{enumerate}[\upshape(i)]
    \item $S=\Lg_n(q)$, $t=1$, and $W$ is quasiequivalent to the natural module for $\hat{S}$. In this case, $M_p(H)\le q^{s-1}s_{p'}$.
    \item $S=\Lg_2(5)$, $(s,t,q)=(2,2,5)$, and $H=\mathrm{PGL}_2(5)\wr\Sym_2$. In this case, $M_p(H)=200<4(5^4-1)/(5-1)$.
\end{enumerate}
\end{prop}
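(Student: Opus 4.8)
The plan is to follow the same skeleton as in the proof of Proposition \ref{prop:basicII}, reducing first to the case $t=1$ by means of Lemma \ref{lem:bothcases}, and then carrying out a case analysis on the simple group $S\in\mathrm{Lie}(p)$ according to the dimension $s$ of the module $W$. The key point that distinguishes the present situation from Proposition \ref{prop:basicII} is that here $S$ is a Lie type group in the \emph{defining} characteristic $p$, so the minimal dimension $s=R_p(S)$ is roughly logarithmic in $|S|$ rather than polynomial; consequently the bound $M_p(S)\le q^{\ell}$ (or the refinements in Table \ref{tab:MainTheorempTable}) furnished by Theorem \ref{thm:MainTheoremp} will be decisive. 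First I would invoke Lemma \ref{lem:bothcases}: if I can show that every subgroup $A_i$ of $A:=\Stab_{\Aut(S)}(W)$ containing $S$ satisfies $M_p(A_i)\le (q^{s}-1)/(q-1)$, then part (iii) of that lemma gives $M_p(H)\le (q^{s^t}-1)/(q-1)$ for all $t$, and we are done except in the two exceptional configurations flagged in the statement of Lemma \ref{lem:bothcases}(iii) (namely $s=t=2$, and $q=s=2,t=3$), which must be checked by hand. The case $S=\Lg_2(5)$, $(s,t,q)=(2,2,5)$ in the statement is exactly such an exceptional configuration surfacing, so this is consistent.

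It therefore remains to bound $M_p(A_i)$ for $S\le A_i\le A\le \Aut(S)$, where $A$ is the stabilizer of (the isomorphism class of) $W$. Since $S\in\mathrm{Lie}(p)$, the outer automorphism group $\Out(S)$ decomposes as diagonal times field times graph automorphisms; because $W$ is not realizable over a proper subfield of $\mathbb{F}_q$, the field automorphisms that stabilize $W$ are severely constrained, and by Corollary \ref{cor:MpCor}(i) one has $M_p(A_i)\le M_p(S)\,M_p(A_i/S)$ with $A_i/S$ a subgroup of the (small, mostly $\{2,3\}$-generated) group $\Out(S)$. Using Theorem \ref{thm:MainTheoremp} (for $\hat S/Z(\hat S)=S$ acting as a central factor) together with the observation that a non-trivial $p$-element of $\Aut(S)$ restricts suitably, I would obtain $M_p(S)\le c\,q^{\ell}$ where $c$ is the small constant from Table \ref{tab:MainTheorempTable} and $\ell$ is the untwisted Lie rank. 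The proposition then follows once one verifies, case by case over the simple groups $S\in\mathrm{Lie}(p)$ and their faithful absolutely irreducible modules $W$ of dimension $s$, the numerical inequality
\[
c\,q^{\ell}\,|{\Out(S)}|_{p'}\ \le\ \frac{q^{s}-1}{q-1},
\]
together with the companion inequality needed to absorb the permutation factor when $t\ge 2$. For this one needs a lower bound on $s=R_p(S)$ in terms of $\ell$ and $q$; these are classical (Landazuri--Seitz--type bounds, as in \cite{LandSeitz} and \cite[Tables 8.x]{BHRD}), and for the generic members of each Lie family the gap between $R_p(S)$ and $\ell$ is comfortably large enough that the inequality holds for every $q=p^f$.

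The genuine exceptions will be of two kinds, and isolating them is the main obstacle. The first kind is the natural module situation: if $S=\Lg_n(q)=A_{n-1}(q)$ and $W$ is (quasiequivalent to) the natural $n$-dimensional module for $\hat S=\SL_n(q)$, then $s=n=\ell+1$, the Lie rank is essentially as large as the module dimension, and $M_p(S)\le q^{\ell}=q^{n-1}$ does \emph{not} beat $(q^{n}-1)/(q-1)$ by enough once the diagonal/field/graph outer factor is multiplied in; this is precisely case (i), where the weaker bound $M_p(H)\le q^{s-1}s_{p'}$ (with $s_{p'}$ accounting for the $p'$-part of the diagonal automorphisms and $|H/N|$) is all one can claim, and it has to be extracted carefully from Lemma \ref{lem:bothcases}(i) rather than (iii). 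The second kind consists of a handful of very small $S$ (small $q$, small rank) with a small exceptional module — such as $\Lg_2(5)$ over $\mathbb{F}_5$ with its $2$-dimensional module, together with the $s=t=2$ boundary behaviour of Lemma \ref{lem:bothcases}(iii) — which must be dealt with by explicit computation of $M_p(H)$; this yields case (ii), where one checks directly that $M_p(\mathrm{PGL}_2(5)\wr\Sym_2)=200<4(5^4-1)/(5-1)=624$. The bulk of the write-up will be the bookkeeping of these small cases and the verification of the generic inequality across the Lie families; the conceptual content is entirely contained in Theorem \ref{thm:MainTheoremp} and Lemma \ref{lem:bothcases}.
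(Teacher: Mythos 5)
Your overall skeleton (reduce to $t=1$ with Lemma \ref{lem:bothcases}, bound $M_p(A_i)\le M_p(S)M_p(A_i/S)$ via Corollary \ref{cor:MpCor}, feed in Theorem \ref{thm:MainTheoremp}, isolate the natural module, finish small cases by computation) is indeed the paper's, but two of your steps do not work as stated. First, Lemma \ref{lem:bothcases}(iii) has \emph{no} exceptional configurations in its statement: the pairs $s=t=2$ and $q=s=2$, $t=3$ are absorbed inside its proof, and its conclusion is unconditional once its hypothesis holds. So your account of where exception (ii) comes from is wrong. The real point is that for the natural module the \emph{hypothesis} of part (iii) fails: here $A=\Inndiag(S)$, and already for $S=\Lg_2(5)$ one has $M_5(\mathrm{PGL}_2(5))=M_5(\Alt_5)M_5(C_2)=10>6=(q^{s}-1)/(q-1)$; in general one can only guarantee $M_p(A_i)\le q^{\ell}(\ell+1)$. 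One is therefore forced through part (i) of the lemma for the natural module with $t\ge 2$, and the resulting inequality $q^{t(\ell+1)-1}(\ell+1)^t\le (q^{(\ell+1)^t}-1)/(q-1)$ fails only at $(\ell,t)=(1,2)$ with $q$ small; the hand computation at $q=5$ (where $M_p(H)=25|K|$ for a $2$-group $K$ of order at most $8$) is what produces the exceptional $H$ in (ii). As written, your argument has no mechanism that locates this exception, nor a proof that the natural-module case with $t\ge 2$ yields no others.

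Second, your generic inequality $c\,q^{\ell}\,|\Out(S)|_{p'}\le (q^{s}-1)/(q-1)$ is not the correct target, because in the proposition $q$ is the field of the module $W$, not the level of $S$. Theorem \ref{thm:MainTheoremp} bounds $M_p(S)$ by a small multiple of $q_1^{\ell}$, where $q_1=p^{e}$ is the level, and the hypothesis that $W$ is not realizable over a proper subfield of $\mathbb{F}_q$ only forces, via \cite[Proposition 5.4.6]{KL}, that $f\mid e$ with $s=(\dim M)^{e/f}$ (or the twisted variant with $f\mid de$) for some irreducible module $M$; when $e>f$ (Steinberg twisted tensor products) the left-hand side must be $q^{e\ell/f}$ times an outer factor of order roughly $e/f$, so the field automorphisms are not ``severely constrained'' and verifying your inequality as written would not bound $M_p(H)$. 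This field-versus-level bookkeeping is a substantial part of the paper's proof and is missing from yours. Relatedly, the dimension lower bounds needed here are the defining-characteristic ones (\cite[Proposition 5.4.13]{KL}, \cite{Lubeck}), not the cross-characteristic Landazuri--Seitz bounds \cite{LandSeitz}, which apply only to $S\notin\mathrm{Lie}(p)$.
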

\begin{proof}
As in the proof of Proposition \ref{prop:basicII}, we will set $n:=s^t$ and $A:=\Stab_{\Aut(S)}(W)$. We will also adopt similar notation to that set out in the comments preceding the statement of Theorem \ref{thm:MainTheoremp}. Indeed, write $S=O^{p'}(\ol{X}_{\ol{\sigma}})$ where $\ol{X}$ is a simple algebraic group and $\ol{\sigma}:\ol{X}\rightarrow\ol{X}$ is a Steinberg endomorphism of $\ol{S}$. As before, we will also write $\ol{\sigma}=\ol{\sigma_p}^e\ol{\gamma}^i$, where $\ol{\sigma_p}$ is a Frobenius automorphism of $\ol{X}$, and either 
\begin{enumerate}[(a)]
    \item $\ol{X}\in\{A_{\ell},D_{\ell},E_6\}$ and $\ol{\gamma}$ is a graph automorphism of $\ol{X}$ as in \cite[Theorem 1.15.2(a)]{GLS};
    \item $\ol{X}\in\{B_2\text{ }(p=2),G_2\text{ }(p=3)\text{, }F_4\text{ }(p=2)\}\}$, and $\ol{\gamma}$ is as in \cite[Theorem 1.15.4(b)]{GLS}; or
    \item $\ol{X}$ is not one of the groups in (a) or (b) above, and $\ol{\gamma}=1$.
\end{enumerate}
Writing $q_1:=p^e$ for the level of $S$; $\ell$ for the untwisted Lie rank of $S$; and $d$ for the order of $\gamma^i$, we will also shorten our notation and write $S={}^dS_{\ell}(q_1)$. Recall that $q=p^f$, and that $\Inndiag(S)$ is defined as $\Inndiag(S):=\ol{X}_{\ol{\sigma}}$.

We first discuss the structure of the group $A$. It is well-known that $\Inndiag(S)$ $\le A$ (see for example \cite[Proposition 5.1.9(i)]{BHRD}). We now determine the group $A/\Inndiag(S)$. The information we require is in \cite[Proposition 5.4.6]{KL} (and the comments afterwards): Recalling that $d\in\{1,2,3\}$, \cite[Proposition 5.4.6]{KL} implies that there exists an irreducible $\mathbb{F}_q[\hat{S}]$-module $M$ such that one of the following holds:
\begin{enumerate}
    \item $f\mid e$ and $s=\dim{W}=(\dim{M})^{e/f}$.
    \item $d\in\{2,3\}$, $f\nmid e$, $f\mid de$, and $s=\dim{W}=(\dim{M})^{de/f}$.
\end{enumerate}
Moreover (see \cite[Proposition 5.1.9(i)]{BHRD} and \cite[Propositions 5.4.2, 5.4.3 and 5.4.6]{KL}), if we are in case (1) then $|A/\Inndiag(S)|=g(e/f)$, where $g:=2$ if $W$ is self-dual, and $g:=1$ otherwise. If we are in case (2), then $|A/\Inndiag(S)|=g(e/f)$ where $g\le d$. 

We are now ready to prove the proposition. Assume first that $S=A_{\ell}(q_1)$ and $t=1$. Then $\dim{M}\geq R_p(S)=\ell+1$ by \cite[Proposition 5.4.13]{KL}. Also, $M_p(S)\le q_1^{\ell}$, by Theorem \ref{thm:MainTheoremp}. It follows from the paragraph above and Corollary \ref{cor:MpCor} that $M_p(H)\le q_1^{\ell}(q_1-1,\ell+1)2(e/f)\le q_1^{\ell}(\ell+1)2(e/f)$. Now, assume that $W$ is not quasiequivalent to the natural $(\ell+1)$-dimensional module for $\hat{S}$ over $\mathbb{F}_{q_1}$. Then either $e/f>1$; or $e/f=1$ and $\dim{M}>\ell+1$. Suppose first that $e/f>1$. Then $M_p(H)\le q_1^{\ell}(\ell+1)2(e/f)$ as above, and this is less than $(q^{(\ell+1)^{e/f}}-1)/(q-1)\le (q^s-1)/(q-1)$ as long as $(q,e/f,\ell)\neq (2,2,1)$. If $(q,e/f,\ell)=(2,2,1)$, then $S=\Lg_2(4)\cong\Alt_5$. Since $p=2$ and $M_2(\Alt_5)=4$, we get $M_p(H)=M_p(S)=4$ in this case, and this gives us what we need. Suppose next that $e/f=1$ and $\dim{M}>\ell+1$. Note that $q_1=q$ in this case, so $q\geq 4$ if $\ell=1$. Moreover, \cite[Proposition 5.4.11]{KL} implies that one of the following holds:
\begin{enumerate}
    \item $\ell>1$ and $\dim{M}\geq \ell(\ell+1)/2$; or
    \item $\ell=1$ and $\dim{M}\geq (\ell+1)(\ell+2)/2=3$.
\end{enumerate}
Assume first that case (1) holds. Then since $q_1^{\ell}(\ell+1)2=q^{\ell}(\ell+1)2$ is less than $(q^{\frac{1}{2}\ell(\ell+1)}-1)/(q-1)\le (q^s-1)/(q-1)$ when either $q\geq 5$ or $\ell\geq 3$, the bound $M_p(H)\le q_1^{\ell}(\ell+1)2$ from above gives us what we need in these cases. Suppose now that $\ell=2$ and $q\in\{2,3,4\}$, then the bound $M_p(H)\le q^{\ell}(q-1,\ell+1)2=2$ gives the result in each case. Assume next that case (2) holds. Then $q\geq 4$, since $S$ is nonabelian simple. Also, $M_p(H)\le 2M_p(S)\le 2q$, by Theorem \ref{thm:MainTheoremp}. Since $2q\le (q^3-1)/(q-1)\le (q^s-1)/(q-1)$ when $q\geq 4$, the result again follows.     

Thus, in the case $S=A_{\ell}(q_1)$ and $W$ is not quasiequivalent to the natural $\mathbb{F}_{q_1}[\hat{S}]$-module, we have proved that $M_p(H)\le (q^s-1)/(q-1)$ if $t=1$. By Lemma \ref{lem:bothcases}(iii), it follows that $M_p(H)\le (q^{s^t}-1)/(q-1)$ for all $t\geq 1$ in this case

Suppose next that $S=A_{\ell}(q_1)$, but that $W$ \emph{is} quasiequivalent to the natural $\mathbb{F}_{q_1}[\hat{S}]$-module. Then we have $q_1=q$ and $A=\Inndiag(S)$. Thus, $M_p(A_i)\le M_p(S)M_p(A_i/S)\le q^{\ell}(q-1,\ell+1)\le q^{\ell}(\ell+1)$ whenever $S\le A_i\le A$, using Theorem \ref{thm:MainTheoremp}. It follows in particular that if $t=1$, then $M_p(H)\le q^{\ell}(\ell+1)_{p'}=q^{s-1}s_{p'}$. This is case (i) in the statement of the proposition. 

We may assume, therefore, that $t>1$. Suppose first that $(\ell,t)\neq (1,2)$. By Lemma \ref{lem:bothcases}(i), we have $M_p(H)\le q^{\ell t}(\ell+1)^tq^{t-1}=q^{t(\ell+1)-1}(\ell+1)^t$. Since this is less than $(q^{(\ell+1)^t}-1)/(q-1)=(q^{s^t}-1)/(q-1)$ for all choices of $q$, $\ell$, and $t\geq 2$ with $(\ell,t)\neq (1,2)$, the result follows in this case. Assume now that $(\ell,t)=(1,2)$. Then by Lemma \ref{lem:bothcases}(i), we have $M_p(H)\le q^{2\ell}(\ell+1)^22=8q^2$. This is less than $(q^{(\ell+1)^2}-1)/(q-1)=(q^4-1)/(q-1)$ if $q\geq 7$. So assume that $q\le 5$. Then $q\in\{4,5\}$, since $S$ is nonabelian simple. It follows that $S\cong \Alt_5$, and $M_2(S)=4$, $M_5(\Alt_5)=5$. Thus, if $q=4$ then we have $M_p(H)=M_p(S)\le 4^2=16<(4^4-1)/(4-1)$. If $q=5$, then $H$ has shape $\Alt_5^2.K$, where $K$ is a $2$-group of order at most $8$. It follows that $M_p(H)=25\times|K|$. We then get $M_p(H)\le (5^4-1)/(5-1)$ if and only if $|K|<8$. Thus, $M_p(H)>(5^4-1)/(5-1)$ if and only if $H=\GL_2(5)\wr \Sym_2\le \GL_4(5)$, and this gives us what we need.

This completes the proof of the proposition in the case $S=A_{\ell}(q)$. We may assume therefore, for the remainder of the proof, that $S\neq A_{\ell}(q)$. By Lemma \ref{lem:bothcases}(iii), we may also assume that $t=1$ (whence $H$ is almost simple). Suppose first that $S\neq {}^2A_{\ell}(q_1)$, and let $m:=\dim{M}$, so that $s\geq m^{e/f}$. If $S=\PSp_4(2)'\cong\Alt_6$, then $M_2(S)=4$, $R_p(S)=3$ (see \cite[Proposition 5.4.13]{KL}), and $\Aut(S)/S$ is a $2$-group. Thus, $M_2(H)=M_2(S)\le (q^3-1)/(q-1)\le (q^s-1)/(q-1)$ as needed. So we may assume that $S\neq \PSp_4(2)'$.
Then inspecting the upper bounds for $M_p(S)$ given by Theorem \ref{thm:MainTheoremp}, and the values of $R_p(S)$ given by \cite[Proposition 5.4.13]{KL}, we quickly see that $M_p(S)\le 2q_1^{m/2}$. Since $|\Inndiag(S)/S|\le 4$ in these cases, it follows (see the second paragraph above) that $M_p(H)\le 8q_1^{m/2}(e/f)3=24q_1^{m/2}(e/f)=24q^{em/2f}(e/f)$. This is less than $(q^{m^{e/f}}-1)/(q-1)\le (q^{s}-1)/(q-1)$ if either $e/f\geq 4$ or $m\geq 10$. So assume that $e/f\in\{1,2,3\}$ and $m=\dim{M}\le 9$. We can then do more specific analysis according to the isomorphism class of $S$. For example, if $S=C_{\ell}(q)$ (with $\ell\geq 2$), then $m\le 9$ implies that $\ell\le 4$, by \cite[Proposition 5.4.13]{KL}. Also, $|A/S|$ divides $(2,p-1)(e/f)(2,p)$ by the second paragraph above, and $M_p(S)\le (2,p)q_1^{m/2}=(2,p)q^{em/2f}$ by Theorem \ref{thm:MainTheoremp}. Thus, in either of the cases $p=2$ or $p\neq 2$, we get $M_p(H)\le 2q^{em/2f}(e/f)$, by definition of the function $M_p$. Since $m\geq 2\ell\geq 4$ and $2q^{em/2f}(e/f)\le (q^{m^{e/f}}-1)/(q-1)$ for $m\geq 4$, the result follows. The arguments for dealing with the remaining groups $S$, with $S\neq {}^2A_{\ell}(q_1)$, are entirely similar.

Assume finally that $S={}^2A_{\ell}(q_1)$ with $\ell\geq 2$. Then $M_p(S)\le q_1^{\ell}=q^{e\ell/f}$ by Theorem \ref{thm:MainTheoremp}. By arguing as in the second paragraph above, we see that $M_p(H)\le q^{e\ell/f}(q_1+1,\ell+1)2(e/f)\le q^{e\ell/f}(\ell+1)2(e/f)$. If $e/f>1$, then this is less than $(q^{(\ell+1)^{e/f}}-1)/(q-1)\le (q^s-1)/(q-1)$, so assume that $e=f$. Then by \cite[Proposition 5.4.6]{KL}, the module $M$ is self-dual. Also, $|A/\Inndiag(S)|=2$ (see the second paragraph above). On the other hand, since $\ell\geq 2$, the natural $\mathbb{F}_q[\hat{S}]$-module is not self-dual (see \cite[Proposition 5.4.2 and 5.4.3]{KL}). Thus, by \cite[Proposition 5.4.11]{KL} we have $s=\dim{W}=\dim{M}\geq \frac{1}{2}\ell(\ell+1)$. Arguing as above, and since $|A/\Inndiag(S)|=2$, we deduce that $M_p(H)\le q^{\ell}(q+1,\ell+1)2\le 2q^{\ell}(\ell+1)$ if $p>2$, and $M_p(H)\le q^{\ell}(q+1,\ell+1)\le q^{\ell}(\ell+1)$ if $p=2$. These upper bounds give $M_p(H)\le (q^{\frac{1}{2}\ell(\ell+1)}-1)/(q-1)\le (q^s-1)/(q-1)$ if $\ell>2$. So assume that $\ell=2$. Then $s\geq 6$ by \cite[Theorem 4.4]{Lubeck}, and the result follows as above.  
\end{proof}

Before proceeding to the proof of Theorem \ref{thm:MainTheoremPrimLin}, it will be useful for us to state the following consequence of Propositions \ref{prop:basicI}, \ref{prop:basicII}, and \ref{prop:basicIIp}.
\begin{cor}\label{cor:MainBasic}
Let $q$ be a power of a prime $p$, and suppose that $H$ is a finite group satisfying one of the following:
\begin{enumerate}[(a)]
    \item $H$ has shape $N.L$, where $N\unlhd H$ is elementary abelian of order $r^{2m}$, and $L\le\Sp_{2m}(r)$ is completely reducible, acting naturally on $N$; or
    \item there exists a finite simple group $S$ with an $s$-dimensional absolutely irreducible projective $\mathbb{F}_q[S]$-module $W$, such that $H$ is a subgroup of the wreath product $\Stab_{\Aut(S)}(W)\wr\Sym_t$ containing $S^t$.
\end{enumerate}
In case (a), set $d:=r^m$, and in case (b) set $d:=s^t$. Then $M_p(H)\le d_{p'}(q^{d}-1)/(q-1)$, unless $H$ lies in Table \ref{tab:EXX}. In these latter cases, the value of $M_p(H)$ is given in the fourth column of Table \ref{tab:EXX}.
\end{cor}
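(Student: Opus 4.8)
The plan is to assemble Corollary \ref{cor:MainBasic} by simply unifying the three preceding propositions, since their conclusions already match the desired bound $d_{p'}(q^d-1)/(q-1)$ almost verbatim once one reconciles the notation. First I would dispose of case (a): here $H$ has shape $N.L$ with $N\cong r^{2m}$ elementary abelian and $L\le \Sp_{2m}(r)$ completely reducible, and $d=r^m$. Proposition \ref{prop:basicI} gives $M_p(H)\le r^m(q^{r^m}-1)/(q-1)$ outside its listed exceptions (i)--(iv). Since $r$ is a prime divisor of $q-1$, it is coprime to $q=p^f$, hence coprime to $p$, so $r^m = (r^m)_{p'} = d_{p'}$, and the bound $r^m(q^{r^m}-1)/(q-1)$ is exactly $d_{p'}(q^d-1)/(q-1)$. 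Thus outside the exceptional list the conclusion holds; the exceptional groups from Proposition \ref{prop:basicI}(i)--(iv), together with the precise values of $M_p(H)$ computed there, are the case-(a) rows of Table \ref{tab:EXX}.

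Next I would handle case (b), splitting according to whether $S\in\mathrm{Lie}(p)$. If $S\notin \mathrm{Lie}(p)$, then $W$ is an absolutely irreducible $\mathbb{F}_q[S]$-module of dimension $s$ and $H\le \Stab_{\Aut(S)}(W)\wr\Sym_t$ contains $S^t$, so Proposition \ref{prop:basicII} applies directly with $n=s^t=d$: it gives $M_p(H)\le (s^t)_{p'}(q^{s^t}-1)/(q-1) = d_{p'}(q^d-1)/(q-1)$ except for the two exceptions $H=S=\Alt_5$ with $(s,t)=(2,1)$ and $q\in\{11,19\}$, and $H=\Alt_5\wr\Sym_2$ with $(s,t)=(2,2)$ and $q=11$; these three triples, with $M_p(H)$ as computed in the proof of that proposition, go into Table \ref{tab:EXX}. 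If instead $S\in\mathrm{Lie}(p)$, then since $W$ is absolutely irreducible over $\mathbb{F}_q$ one may take $\mathbb{F}_q$ minimal with this property (replacing $q$ by the relevant subfield does not weaken the bound, as $(q'^{d}-1)/(q'-1)\le (q^d-1)/(q-1)$ for $q'\mid q$ — or more simply, if $W$ is realizable over a proper subfield the module "really lives" there and Proposition \ref{prop:basicIIp} is stated precisely under the hypothesis that $W$ is not realizable over a proper subfield). Proposition \ref{prop:basicIIp} then yields $M_p(H)\le (q^{s^t}-1)/(q-1)\le d_{p'}(q^d-1)/(q-1)$ except in its cases (i) and (ii). In case (i) ($S=\Lg_n(q)$, $t=1$, $W$ the natural module, so $s=n=d$) we get $M_p(H)\le q^{s-1}s_{p'}$; one checks $q^{s-1}s_{p'}\le s_{p'}(q^s-1)/(q-1)$ since $q^{s-1}(q-1)\le q^s - 1$, i.e. $q^s - q^{s-1}\le q^s - 1$, which holds as $q^{s-1}\ge 1$. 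So case (i) of Proposition \ref{prop:basicIIp} is actually \emph{not} an exception to Corollary \ref{cor:MainBasic} and requires no table entry; the only genuine exception is case (ii), $H=\mathrm{PGL}_2(5)\wr\Sym_2$ with $M_p(H)=200$, which goes into Table \ref{tab:EXX}.

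Finally I would cross-check that the exceptional entries collected above are exactly the rows of Table \ref{tab:EXX}, matching the stated value of $M_p(H)$ in each: the $(3,1,7)$ and $(2,1,q)$ and $(2,2,q)$ families from Proposition \ref{prop:basicI}; the $\Alt_5$ and $\Alt_5\wr\Sym_2$ rows from Proposition \ref{prop:basicII}; and the $\mathrm{PGL}_2(5)\wr\Sym_2$ row from Proposition \ref{prop:basicIIp}. The bookkeeping here is purely a matter of transcription from the three propositions' statements and proofs. I expect no real obstacle: the only mild subtlety is the realizability-over-a-subfield reduction needed to invoke Proposition \ref{prop:basicIIp} (whose hypothesis forbids $W$ from descending to a proper subfield), and verifying the elementary inequality $q^{s-1}s_{p'}\le s_{p'}(q^s-1)/(q-1)$ that shows Proposition \ref{prop:basicIIp}(i) imposes no new table row. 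Everything else is immediate from Propositions \ref{prop:basicI}, \ref{prop:basicII}, and \ref{prop:basicIIp}, together with the observation that in case (a) the divisor $r$ of $q-1$ is automatically a $p'$-number.
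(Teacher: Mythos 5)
Your overall strategy is the intended one: the paper offers no separate argument for Corollary \ref{cor:MainBasic}, treating it as an immediate assembly of Propositions \ref{prop:basicI}, \ref{prop:basicII} and \ref{prop:basicIIp}, and your reductions (the observation that $r^m=d_{p'}$ in case (a), the subfield-descent remark needed to meet the hypothesis of Proposition \ref{prop:basicIIp}, and the check that in case (i) of that proposition $q^{s-1}s_{p'}\le s_{p'}(q^s-1)/(q-1)$ so no table row is needed) are all correct.

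There is, however, one concrete error: your claim that Proposition \ref{prop:basicIIp}(ii), i.e.\ $H=\mathrm{PGL}_2(5)\wr\Sym_2$ with $(s,t,q)=(2,2,5)$, is a genuine exception that ``goes into Table \ref{tab:EXX}''. It does not appear in Table \ref{tab:EXX}, and it is not an exception to the corollary: here $d=s^t=4$ and $p=5$, so $d_{p'}=4$ and the corollary's bound is $4(5^4-1)/(5-1)=624$, while $M_p(H)=200\le 624$ --- exactly the comparison already recorded in the statement of Proposition \ref{prop:basicIIp}(ii) (``$M_p(H)=200<4(5^4-1)/(5-1)$''). In other words, case (ii) requires the same one-line check you carried out for case (i): the extra factor $d_{p'}$ in the corollary's bound absorbs the excess over $(q^{s^t}-1)/(q-1)$. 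As written, your argument establishes a statement with a spurious additional exceptional row, and so does not prove the corollary as stated (a reader following it would conclude that the paper's Table \ref{tab:EXX} is incomplete). More generally, the passage from the propositions' exception lists to Table \ref{tab:EXX} is not pure transcription: each listed exception must be re-tested against the corollary's bound $d_{p'}(q^d-1)/(q-1)$, and only those that actually exceed it survive; once you do this for Proposition \ref{prop:basicIIp}(ii) the discrepancy disappears and the rest of your proposal goes through.
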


\begin{table}[]
    \centering
    \begin{tabular}{c|c|c|c}
     $H$    & $d$ & $q$ & $M_p(H)$\\
     \hline
     $3^2.\Sp_2(3)$ & $3$    & $7$ & $216$\\
     $2^2.\Sp_2(2)$ & $2$    & $5,7$ & $24$\\
     $2^4.\Sp_4(2)$ & $4$    & $3$ & $288$\\
     $2^4.\Sp_4(2)$ & $4$    & $7,11,13$ & $11520$\\
     $2^4.(5\rtimes 4)$ & $4$    & $3$ & $320$\\
     $2^4.(\Sp_2(2)\wr \Sym_2)$ & $4$    & $5$ & $1152$\\ 
     $2^4.\Sp_4(2)'$ & $4$    & $7$ & $5760$\\ 
     $2^4.\Sym_5$ & $4$    & $7$ & $1920$\\
     $\Alt_5$ & $2$    & $11,19$ & $60$\\
     $\Alt_5\wr\Sym_2$ & $4$    & $11$ & $7200$\\
     \hline
    \end{tabular}
    \caption{Exceptions from Corollary \ref{cor:MainBasic}}
    \label{tab:EXX}
\end{table}

We are almost ready to prove Theorem \ref{thm:MainTheoremPrimLin}. To facilitate a tidier proof of the theorem, we first prove the following easy lemma.
\begin{lemma}\label{lem:strange}
Let $q_1$ be a power of a prime $p$, and let $(M_1,d_1),\hdots,(M_k,d_k)$ be pairs of natural numbers, with $d_\geq 2$ for all $i$. Number the rows in Table \ref{tab:EXX} from $1$ to $10$. Suppose that for each $i$, the pair $(M_i,d_i)$ satisfies one of the following:
\begin{enumerate}[(i)]
    \item $M_i:= (d_i)_{p'}(q_1^{d_i}-1)/(q_1-1)$; or
    \item there exists $f(i)$ with $1\le f(i)\le 10$ such that $d_i,q_1$ and $M_i$ lie in the third, fourth and fifth column of Row $f(i)$ in Table \ref{tab:EXX}, respectively.
\end{enumerate}
Assume also that each of the following conditions hold:
\begin{itemize}
    \item if $(M_i,d_i)$ and $(M_j,d_j)$ satisfy (ii) above, with $i\neq j$ and $f(i)=f(j)$, then $f(i)\in\{9,10\}$.
    \item if $(M_i,d_i)$ and $(M_j,d_j)$ satisfy (ii) above with $i\neq j$ and $2\le f(i)\le 8$, then $f(j)\in\{1,9,10\}$.
    \item if $k=1$, then $(M_1,d_1)$ satisfies (i).
\end{itemize}
Then $(q_1-1)\prod_{i=1}^kM_i\le (n_1)_{p'}(q_1^{n_1}-1)$, where $n_1:=\prod_{i=1}^kd_i$.
\end{lemma}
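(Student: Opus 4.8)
The plan is to prove the inequality $(q_1-1)\prod_{i=1}^k M_i \le (n_1)_{p'}(q_1^{n_1}-1)$ by an induction on $k$, using at each stage the elementary submultiplicativity inequality
\[
   (q_1-1)\cdot\frac{(q_1^a-1)}{q_1-1}\cdot\frac{(q_1^b-1)}{q_1-1} = \frac{(q_1^a-1)(q_1^b-1)}{q_1-1} \le \frac{q_1^{ab}-1}{q_1-1},
\]
valid for all integers $a,b\ge 2$ (indeed for $a\ge 1$, $b\ge 2$), together with the companion bookkeeping inequality $a_{p'}b_{p'}\le (ab)_{p'}$. These two facts reduce the ``generic'' case — where every pair $(M_i,d_i)$ satisfies alternative (i) — to a one-line computation: multiplying the $M_i$ telescopes the factors $(q_1^{d_i}-1)/(q_1-1)$ into a single factor $(q_1^{n_1}-1)/(q_1-1)$, and the $p'$-parts $(d_i)_{p'}$ multiply into a divisor of $(n_1)_{p'}$. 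The third bullet hypothesis guarantees that when $k=1$ we are in exactly this generic situation, so the base case is immediate.

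For the inductive step I would single out those indices $i$ for which $(M_i,d_i)$ satisfies alternative (ii), i.e. comes from a row of Table \ref{tab:EXX}. The structural hypotheses (the three bulleted conditions) severely restrict how many such ``exceptional'' pairs can occur and which rows they can come from: apart from the repeatable rows $9$ and $10$ (which correspond to $\Alt_5$ and $\Alt_5\wr\Sym_2$, where $q_1=11$ or $q_1=19$), at most one exceptional pair can come from rows $2$--$8$, and if one does then every other exceptional pair must come from row $1$ (which is the $q_1=7$, $d=3$ case). So the argument splits into a small number of configurations: (a) all pairs generic; (b) exactly one exceptional pair from a row $2\le f\le 8$, the rest generic; (c) several exceptional pairs drawn from rows $1$, $9$, $10$ only; and the mixed sub-cases thereof. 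In each configuration one replaces the exceptional $M_i$ by the explicit numerical value from Table \ref{tab:EXX}, absorbs the remaining generic factors via the telescoping inequality above into a single factor $(q_1^{D}-1)/(q_1-1)$ where $D=\prod_{i\ \mathrm{generic}}d_i$, and then checks by direct computation that
\[
   \Big(\prod_{i\ \mathrm{exc}} M_i\Big)\cdot (q_1^{D}-1) \le (n_1)_{p'}(q_1^{n_1}-1),
\]
using that $n_1 = D\cdot\prod_{i\ \mathrm{exc}} d_i$ and that $q_1$ is fixed by the exceptional rows (each row of Table \ref{tab:EXX} pins down $q_1$). Since $d_i\ge 2$ for every $i$, each exceptional $d_i$ contributes a factor of at least $2$ to the exponent $n_1$, and $q_1^{n_1}$ grows fast enough to dominate the bounded product of table values; the $p'$-part factor $(n_1)_{p'}$ on the right only helps.

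I expect the main obstacle to be purely organisational rather than mathematical: one must verify the final numerical inequality in each of the finitely many table-configurations permitted by the three bulleted hypotheses, and in particular handle the case $k=2$ with both pairs exceptional (forced by the hypotheses to be two copies of row $9$ or row $10$, or one copy of row $1$ together with one copy of row $9$/$10$ — but note rows $1$ and $9$/$10$ fix incompatible values of $q_1$, namely $7$ versus $11,19$, so that combination is actually vacuous). Ruling out the incompatible $q_1$-combinations first will collapse most of the case analysis; what then remains is checking, say, that for two copies of row $9$ one has $(11-1)\cdot 60\cdot 60 \le (4)_{p'}(11^4-1)$, i.e. $36000\le 4\cdot 14640 = 58560$, and the analogous bounded checks for the other surviving configurations — all of which are comfortably true with room to spare. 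No step is deep; the care is entirely in enumerating the admissible configurations correctly.
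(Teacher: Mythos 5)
Your overall strategy --- induction on $k$, the telescoping inequality $(q_1^a-1)(q_1^b-1)\le q_1^{ab}-1$ for $a,b\ge 2$ together with multiplicativity of $p'$-parts to absorb all pairs of type (i), and explicit numerical checks for the pairs coming from Table \ref{tab:EXX}, pruned by $q_1$-compatibility --- is exactly the route the paper takes. Two points in your execution need repair, one trivial and one substantive. The trivial one: your parenthetical claim that $(q_1^a-1)(q_1^b-1)\le q_1^{ab}-1$ already holds for $a\ge 1$, $b\ge 2$ is false (for $a=1$ it forces $q_1\le 2$); this is harmless here since the lemma assumes $d_i\ge 2$.

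The substantive one: your verification step is not a check of ``finitely many table-configurations'', and the product of exceptional table values is not bounded, because the first bulleted hypothesis allows arbitrarily many pairs with $f(i)\in\{9,10\}$ (for instance any number of copies of $(M_i,d_i)=(60,2)$ at $q_1=11$). As described, the configuration-by-configuration check therefore does not terminate. The fix is what your opening sentence already announces and what the paper actually does: run the induction in earnest, peeling off one pair per step, so that after applying the inductive hypothesis to the first $k-1$ pairs you only need, for each table row, the single inequality $M_k(q_1^d-1)\le (d_k)_{p'}(q_1^{d\,d_k}-1)$, valid uniformly in $d\ge 2$ (and reorder when $k=2$ so that the pair the hypothesis is applied to is of type (i), since the $k=1$ case of the lemma requires type (i)). The only case the induction cannot reach is $k=2$ with both pairs exceptional, which must be checked directly; there your pruning is correct for row $1$ against rows $9$/$10$, but note that row $4$ occurs with $q_1=11$, so the combinations of row $4$ with rows $9$ and $10$, as well as $\{9,9\}$, $\{9,10\}$, $\{10,10\}$ at $q_1=11$, $\{9,9\}$ at $q_1=19$, and row $1$ with rows $2,4,7,8$ at $q_1=7$, are genuinely present and must all be verified --- they hold comfortably, as in your sample computation $36000\le 58560$.
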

\begin{proof}
We prove the claim by induction on $k$, with the case $k=1$ being trivial. So assume that $k>1$, and that the lemma holds for all smaller $k$. 

Suppose first that $k=2$, and that both $(M_1,d_1)$ and $(M_2,d_2)$ satisfy (ii) in the statement of the lemma. By assumption, the triples $(d_i,q_1,M_i)$ cannot both come from rows 2--8 in Table \ref{tab:EXX}. With this in mind, we can now prove that $(q_1-1)M_1M_2\le (n_1)_{p'}(q_1^{n_1}-1)$ via a quick calculation. For example, if $q_1=7$, then  $\{f(1),f(2)\}=\{1,2\},\{1,4\},\{1,7\}$ or $\{1,8\}$. It then follows, respectively, that $n_1=6$, $12$, $12$, or $12$; and that $(7-1)M_1M_2$ is at most $(7-1)\times 216\times 24$, $(7-1)\times 216\times 11520$, $(7-1)\times 216\times 5760$, or $(7-1)\times 216\times 1920$. This yields $(7-1)M_1M_2<6(7^6-1)$ in the first case; and $(7-1)M_1M_2<12(7^{12}-1)$ in the next three cases. This gives us what we need in each case. 
Thus, we may assume that if $k=2$, then at least one of the pairs $(M_i,d_i)$, say $(M_1,d_1)$, satisfies (i) in the statement of the lemma. The inductive hypothesis then implies that $(q_1-1)\prod_{i=1}^{k-1}M_i\le (d)_{p'}(q_1^{d}-1)$, where $d:=\prod_{i=1}^{k-1}d_i$. So it will suffice to prove that $(d)_{p'}(q_1^{d}-1)M_k\le (dd_k)_{p'}(q_1^{dd_k}-1)$. If $M_k\le (d_k)_{p'}(q_1^{d_k}-1)$, then this is clear, since each $d_i$ is at least $2$. So assume that there exists $f(k)$, with $1\le f(k)\le 10$, such that $d_k,q_1$ and $M_k$ lie in the third, fourth and fifth column of Row $f(k)$ in Table \ref{tab:EXX}, respectively. It is then just a matter of going through each of the possibilities for $(d_k,q_1,M_k)$ from Table \ref{tab:EXX}, and deriving the required upper bound. For example, if $f(k)=4$, so that $q_1\in\{7,11,13\}$, $d_k=4$, and $M_k=11520$, then $(d)_{p'}(q_1^{d}-1)M_k=11520(d)_{p'}(q_1^{d}-1)$. Note that $11520(q_1^{d}-1)$ is less than $4(q_1^{4d}-1)$ for all choices of $d\geq 2$, and $q_1\in\{7,11,13\}$. This gives us what we need. The remaining possibilities for $(d_k,q_1,M_k)$ from Table \ref{tab:EXX} can be dealt with in an entirely similar way.
\end{proof}

We are now ready to prove Theorem \ref{thm:MainTheoremPrimLin}.
\begin{proof}[Proof of Theorem \ref{thm:MainTheoremPrimLin}]
We will in fact prove that if $G\le \GL_n(q)$ is weakly quasiprimitive; $G$ is not the group $2^{1+4}.\Sp_4(2)\le \GL_4(3)$ (with $(n,q)=(4,3)$); and $G$ is not one of the exceptions listed in part (i) of the statement of the theorem, then
\begin{align}\label{lab:MainClaim}
M_p(G_a)\le (n/d)_{p'}(q^n-1),    
\end{align}
where $d$ and $G_a\le \GL_{n/d}(q^d)$ are as in Lemma \ref{lem:qlin}. Since $M_p(G)\le d_{p'}M_p(G_a)$ by Lemma \ref{lem:qlin} and the definition of the function $M_p$, the theorem will follow from (\ref{lab:MainClaim}) when $(G,n,q)$ is not $(2^{1+4}.\Sp_4(2),4,3)$. In this latter case, we can compute directly that the proportion of $3$-elements in $G$ is at least $1/[4(3^4-1)]$.


So assume that $G$ is not one of the exceptions listed in the statement of the theorem, and that $(G,n,q)$ is not $(2^{1+4}.\Sp_4(2),4,3)$. Let $d$ and $G_a$ be as above. Also, let $\mathcal{R}$, $\mathcal{S}$, $q_1=q^d$, $m_r$, $s(S)$, $t(S)$, and $\mathcal{X}$ be as defined in Lemmas \ref{lem:qlin} and \ref{lem:MainPrimLin}. For $X\in\mathcal{X}$, define $d_X:=r^{m_r}$ if $X=O_r(G_a)$ for some $r\in\mathcal{R}$; and define $d_X:=s(S)^{t(S)}$ if $X=T(S)$ for some $S\in\mathcal{S}$. By
Corollary \ref{cor:MainBasic}, either $M_p(G_a/C_{G_a}(X))\le (d_X)_{p'}(q_1^{d_X}-1)/(q_1-1)$, or $G_a/C_{G_a}(X)$ lies in Table \ref{tab:EXX}. In the former case, define $M_X:=(d_X)_{p'}(q_1^{d_X}-1)/(q_1-1)$. In the latter case, define $M_X$ to be the upper bound for $M_p(G_a/C_{G_a}(X))$ given in the fourth column of Table \ref{tab:EXX}. 

Suppose first that $|\mathcal{X}|=1$ and that $(G_a/C_{G_a}(X),d_X,q_1)$ lies in Table \ref{tab:EXX}, where $\mathcal{X}=\{X\}$. Since $q_1$ is prime in each of these cases, we must have $d=1$ and $G= G_a$. If $X=O_r(G)$ for some $r\in\mathcal{R}$, then it follows that $G$ has shape $Z\circ r^{1+2m}.(H/O_r(H))$, where $Z$ is a subgroup of $Z(\GL_n(q))$, and $H:=G/C_G(X)$. We can then read off the possibilities for $G$ from Table \ref{tab:EXX}, and these give us the exceptions listed in the first nine rows of Table \ref{tab:EXMainTheoremPrimLin}. We do the same thing in the case $X=T(S)$, with $S\in\mathcal{S}$: we see that $G$ is one of the groups in the final two rows of Table \ref{tab:EXMainTheoremPrimLin}. Note here that despite the presence of the group $2^{4}.\Sp_4(2)$ in Table \ref{tab:EXX}, the associated primitive group $G:=2^{1+4}.\Sp_4(2)\le \GL_4(3)$ does have the property that the proportion of $3$-elements in $G$ is at least $1/[4(3^4-1)]$, as mentioned above.

So we may assume that 
\begin{itemize}
    \item if $|\mathcal{X}|=1$, then $M_X= (d_X)_{p'}(q_1^{d_X}-1)/(q_1-1)$, where $\mathcal{X}=\{X\}$.
\end{itemize}
Now, notice that Rows 2--8 of Table \ref{tab:EXX} are all possibilities for $G_a/C_{G_a}(X)$ when $X=O_2(G_a)$, while Row 1 is a possibility for $G_a/C_{G_a}(X)$ when $X=O_3(G_a)$. It follows that:
\begin{itemize}
    \item if $X_i,X_j\in\mathcal{X}$ with $X_i\neq X_j$, then $G_a/C_{G_a}(X_i)$ and $G_a/C_{G_a}(X_j)$ cannot both lie in Rows 2--8 in Table \ref{tab:EXX}.
    \item if $X_i,X_j\in\mathcal{X}$ with $X_i\neq X_j$, and $G_a/C_{G_a}(X_i)\cong G_a/C_{G_a}(X_j)$ lie in the same row in Table \ref{tab:EXX}, then that row must be either Row 9, or Row 10.
\end{itemize}
It now follows from Lemma \ref{lem:strange} that $(q_1-1)\prod_{X\in\mathcal{X}}M_X\le (n_1)_{p'}(q_1^{n_1}-1)$, where $n_1:=\prod_{X\in\mathcal{X}}d_X$. Since $n_1$ divides $n/d$ by Lemma \ref{lem:MainPrimLin}, and
\begin{align}\label{lab:This}
M_p(G_a)\le (q_1-1)\left(\prod_{X\in\mathcal{X}}M_p(G_a/C_{G_a}(X))\right)\le (q_1-1)\prod_{X\in\mathcal{X}}M_X
\end{align}
by Corollary \ref{cor:PrimMpsCor}, the upper bound at (\ref{lab:MainClaim}), and hence the theorem, follows. \end{proof}

We conclude the section with the following corollary of the proof of Theorem \ref{thm:MainTheoremPrimLin}, which will be useful in our proof of Theorem \ref{thm:JordanAnalogue}.
\begin{cor}\label{cor:MTPL}
Let $q$ be a power of a prime $p$, and let $G\le\GL_n(q)$ be primitive. If $G$ is not one of those groups in Table \ref{tab:EXMainTheoremPrimLin}, and $(G,n,q)\neq (2^{1+2}.\Sp_4(2),4,3)$, then $M_p(G)\le n_{p'}(q^n-1)$. If $G$ is one of the groups in Table \ref{tab:EXMainTheoremPrimLin}, then $M_p(G)\le f(G,n,q)$, where $f(G,n,q)$ is as in the fourth column of the table. Finally, if $(G,n,q)= (2^{1+2}.\Sp_4(2),4,3)$, then $M_p(G)=576$. 
\end{cor}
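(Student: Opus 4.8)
The plan is to read both halves of the statement off the \emph{proof} of Theorem \ref{thm:MainTheoremPrimLin} (rather than its statement), combined with the multiplicativity of $M_p$ from Corollary \ref{cor:MpCor}. For the generic bound I would argue as follows. Assume $G\le\GL_n(q)$ is primitive and is neither one of the groups in Table \ref{tab:EXMainTheoremPrimLin} nor the group $2^{1+4}.\Sp_4(2)\le\GL_4(3)$. A primitive linear group is weakly quasiprimitive (Section \ref{sec:primsec}), so the proof of Theorem \ref{thm:MainTheoremPrimLin} applies to $G$ and, in the notation of Lemma \ref{lem:qlin}, yields both inequality (\ref{lab:MainClaim}), i.e. $M_p(G_a)\le (n/d)_{p'}(q^n-1)$, and the inequality $M_p(G)\le d_{p'}M_p(G_a)$. (The latter follows by applying Corollary \ref{cor:MpCor}(i) successively along the subnormal series $G_a\unlhd\dots\unlhd G_0=G$, using Lemma \ref{lem:qlin}(i)--(ii) to see that the quotients $G_i/G_{i+1}$ are abelian with product of orders dividing $d$, and that $M_p(A)=|A|_{p'}$ for a finite abelian group $A$ directly from the definition of $M_p$.) Combining these with the identity $d_{p'}\,(n/d)_{p'}=n_{p'}$, which holds since $d\mid n$ and the $p'$-part is multiplicative, gives $M_p(G)\le n_{p'}(q^n-1)$, the first assertion.

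For the groups in Table \ref{tab:EXMainTheoremPrimLin}, I would recall from the proof of Theorem \ref{thm:MainTheoremPrimLin} that these occur precisely when $|\mathcal{X}|=1$, $d=1$ (so $G=G_a$) and $q$ is prime, with $G/C_G(X)$ one of the groups in Table \ref{tab:EXX} and $G$ itself a central product of a scalar subgroup with the associated component of $F^*(G)$. Since the eleven resulting groups are all explicit and small, it then suffices to compute $M_p(G)$ directly. By Theorem \ref{thm:MainTheoremPrimLin}(i) the proportion of $p$-elements of $G$ is $1/P(G,n,q)$, hence $M_p(G)\ge |G|/|\Ord(G,p)|=P(G,n,q)$ by Corollary \ref{cor:MpCor}(ii); and inspecting the order of $G$ against the relevant characteristic one checks that $p\nmid |G|$ in each of the eleven rows, so that $|\Ord(G,p)|=1$ and therefore $M_p(G)=|G|=P(G,n,q)$. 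In particular $M_p(G)\le P(G,n,q)$, with equality.

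The one remaining group, $G=2^{1+4}.\Sp_4(2)\le\GL_4(3)$, I would treat separately, since here $p=3$ divides $|G|=23040$. I would compute $M_3(G)$ from the composition factors of $G$ via Corollary \ref{cor:MpCor}(i): these are six copies of $C_2$ (five inside $2^{1+4}$, one inside $\Sp_4(2)\cong\Sym_6$) together with one copy of $\Alt_6$, so $M_3(G)=M_3(C_2)^6\,M_3(\Alt_6)=2^6\,M_3(\Alt_6)$. A short direct computation gives $M_3(\Alt_6)=9$: a Sylow $3$-subgroup of $\Aut(\Alt_6)$ is contained in $\Alt_6$, so only the trivial coset need be considered, and among the non-trivial $3$-elements of $\Alt_6$ a single $3$-cycle has $\Alt_6$-centralizer of the minimal order $9$ (sharper than the bound $3^{6/2}=27$ coming from Theorem \ref{thm:AltThm}). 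Hence $M_3(G)=2^6\cdot 9=576$.

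The only genuinely non-formal ingredients are the finite, explicit checks in the last two paragraphs: verifying that $p\nmid|G|$ for every row of Table \ref{tab:EXMainTheoremPrimLin}, and pinning down $M_3(\Alt_6)=9$ for the final group. I expect these to be the main obstacle, although both are routine; everything else is a formal consequence of material already established.
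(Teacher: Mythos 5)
Your proposal is correct and follows essentially the same route as the paper, whose proof of this corollary is simply the remark that the proof of Theorem \ref{thm:MainTheoremPrimLin} already bounds $M_p(G)$ (via (\ref{lab:MainClaim}) together with $M_p(G)\le d_{p'}M_p(G_a)$ and $d_{p'}(n/d)_{p'}=n_{p'}$), plus the assertion that $M_3(2^{1+4}.\Sp_4(2))=576$ for the group in $\GL_4(3)$. You merely make explicit the details the paper leaves implicit: that the groups in Table \ref{tab:EXMainTheoremPrimLin} are $p'$-groups, so that $M_p(G)=|G|$ there, and the composition-factor computation $M_3(2^{1+4}.\Sp_4(2))=2^6\cdot M_3(\Alt_6)=2^6\cdot 9=576$.
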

\begin{proof}
The proof of Theorem \ref{thm:MainTheoremPrimLin} actually proves the statement in the corollary (by Corollary \ref{cor:MpCor}, proving that $M_p(G)\le M$ is stronger than proving that the proportion of elements of $p$-power order in $G$ is at least $1/M$). As can be seen therein, the primitive group $G=2^{1+4}.\Sp_4(2)\le \GL_4(3)$ has $M_p(G)=576$, and so does not satisfy $M_p(G)\le 4(3^4-1)$. (Since the proportion of $3$-elements in $G$ can be computed to be at least $1/[4(3^4-1)]$, this group is not an exception to Theorem \ref{thm:MainTheoremPrimLin}).
\end{proof}

\section{The proof of Theorems \ref{thm:JordanAnalogue} and \ref{thm:JordanIrr}}\label{sec:Jordanproof}
In this final section, we prove Theorems \ref{thm:JordanAnalogue} and \ref{thm:JordanIrr}. Before doing so, we require the following  easy lemma. Recall that the function $h_p:\mathbb{R}\rightarrow\mathbb{R}$ is defined as $h_p(m):=3^{(m-1)/2}$ if $p=2$; $h_p(m):=20^{(m-1)/4}$ if $p=3$; and $h_p(m):=(p-1)!^{(m-1)/(p-2)}$ if $p>3$. 
\begin{lemma}\label{lem:mtbound}
Let $m$ and $t$ be positive integers, and let $q$ be a power of a prime $p$. Then each of the following assertions hold. 
\begin{enumerate}[\upshape(i)]
        \item if $p=2$, then $[m_{2'}(q^m-1)]^th_p(t)\le [3(q^3-1)]^{mt/3}h_p(mt/3)$.
        \item if $q=3$, then $[m_{2'}(q^m-1)]^th_p(t)\le 640^{mt/4}h_p(mt/4)$.
        \item if $q=3$ and $4\nmid n$, then $[m(q^m-1)]^th_p(t)\le [2(q^2-1)]^{mt/2}h_p(mt/2)$.
        \item if $q\in\{9,27\}$, then $[m(q^m-1)]^th_p(t)\le [2(q^2-1)]^{mt/2}h_p(mt/2)$.
        \item if $q\in\{5,7,11\}$, then $[m(q^m-1)]^th_p(t)\le \alpha(q)^{mt/2}h_p(mt/2)$, where $\alpha(5):=96$, $\alpha(7):=144$, and $\alpha(11):=600$.
        \item if $q\in\{5,7,11\}$ and $n$ is odd, then $[m_{p'}(q^m-1)]^th_p(t)\le (q-1)^{mt}h_p(mt)$.
        \item if $p>2$ and $q\not\in\{3,5,7,9,11,27\}$, then $[m_{p'}(q^m-1)]^th_p(t)\le (q-1)^{mt}h_p(mt)$. Moreover, if $q\in\{13,19\}$ and $m(q)$ divides $n$, where $m(13):=4$ and $m(19):=2$, then $\alpha(q)^{n/m(q)}h_p(n/m(q))\le (q-1)^nh_p(n)$, where $\alpha(13):=138240$ and $\alpha(19):=1140$.
\end{enumerate}
\end{lemma}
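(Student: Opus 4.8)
The plan is to prove each of the seven inequalities by essentially the same elementary strategy: in every case the claimed upper bound is of the form $[C(q,e)]^{mt/e}h_p(mt/e)$ for a fixed ``base dimension'' $e\in\{1,2,3,4\}$ and a constant $C(q,e)$, and the left-hand side is $[m_{p'}(q^m-1)]^th_p(t)$. First I would reduce each inequality to a statement with $t=1$. Indeed, since $h_p$ is multiplicative in the sense that $h_p(a)h_p(b)=h_p(a+b)$ whenever $p=2$ (and more generally $h_p(x)=\lambda^{(x-1)/c}$ with $h_p(a)h_p(b)=\lambda^{(a+b-2)/c}=h_p(a+b)/\lambda^{1/c}\le h_p(a+b)$ for the relevant $\lambda\ge 1$, $c$), and since $h_p(t)\le h_p(t)^{\text{something}}$ can be absorbed, one checks that it suffices to prove $m_{p'}(q^m-1)\le [C(q,e)]^{m/e}h_p(m/e)$ for all $m\ge 1$; raising to the $t$-th power and multiplying by $h_p(t)$ then yields the general case, using $h_p(m/e)^t h_p(t)\le h_p(mt/e)$. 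I would carry out this reduction once at the start and then treat the $t=1$ inequalities case by case.

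Next, for the $t=1$ reductions I would split according to the divisibility of $m$ by $e$. The clean case is $e\mid m$: then $h_p(m/e)=h_p(m)^{1/e}\cdot(\text{bounded factor})$ roughly, and one needs $m_{p'}(q^m-1)\le C(q,e)^{m/e}h_p(m/e)$; taking $e$-th powers this becomes $[m_{p'}(q^m-1)]^e\le C(q,e)^m h_p(m/e)^e$, and since $h_p(m/e)^e\ge h_p(m)\cdot(\text{const})$ one reduces to comparing $[m_{p'}(q^m-1)]^e$ with a constant times $C(q,e)^m h_p(m)$. Because $C(q,e)$ was chosen (in the definitions in Theorems \ref{thm:JordanAnalogue} and \ref{thm:JordanIrr}) precisely so that $C(q,e)\ge (q^e-1)$-type quantities, the inequality $q^m-1\le (q^e-1)^{m/e}$-style estimates plus the polynomial factor $m_{p'}$ being dominated by $h_p$ (which is genuinely exponential) close the gap for all large $m$. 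For $m$ not divisible by $e$, or $m$ small, I would simply verify the finitely many remaining cases by direct computation; the relevant range is small because once $m$ exceeds a modest bound the exponential term $h_p$ dominates any fixed polynomial-in-$m$ discrepancy. The ``Moreover'' clause in (vii) for $q\in\{13,19\}$ is a single numerical inequality $\alpha(q)^{n/m(q)}h_p(n/m(q))\le (q-1)^n h_p(n)$ with $m(q)\in\{2,4\}$ fixed, which I would dispatch by the same $e$-th-power trick: raise both sides to the $m(q)$-th power, note $\alpha(q)\le (q-1)^{m(q)}\cdot(\text{small})$, and compare the $h_p$ factors.

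The main obstacle I anticipate is not conceptual but bookkeeping: the factor $m_{p'}$ (the $p'$-part of $m$) on the left and the precise exponents $(m-1)/c$ versus $(mt/e-1)/c$ in the definition of $h_p$ mean the ``multiplicativity'' of $h_p$ is only an inequality $h_p(a)h_p(b)\le h_p(a+b)$ in one direction and one must check it points the right way — it does, since $h_p(a+b)/(h_p(a)h_p(b))=\lambda^{1/c}\ge 1$. The genuinely delicate cases are the small primes $p=2$ and $p=3$ with $e=3,4$ respectively, where $C(q,e)$ (namely $3(q^3-1)$ and $640$) and $h_p$ ($3^{(m-1)/2}$ and $20^{(m-1)/4}$) interact, and where $m$ not divisible by $3$ or $4$ forces one to use the ceiling/floor behaviour of $m/e$ inside $h_p$; there I would fall back on checking $m\le$ a small explicit bound (around $m\le 12$ suffices, since for larger $m$ a crude estimate $q^m-1<q^m$ together with $m_{p'}\le m$ already gives the result) by hand or by a short computation, exactly as Lemma \ref{lem:basep} and Lemma \ref{lem:basepbound} handle their small-$n$ cases. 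I would remark that all seven parts are ultimately ``an easy exercise'' once the reduction to $t=1$ and the $e$-th-power comparison are in place, and present the argument at that level of detail rather than grinding through every numerical check.
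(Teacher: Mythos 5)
Your proposal is correct and follows essentially the same route as the paper's proof: the paper likewise reduces each part to the $t$-th power of a $t=1$ inequality (using that $h_p(m/e)^t\,h_p(t)=h_p(mt/e)$ exactly, since $h_p$ is a pure exponential in its argument), writes out only the generic case $p>2$, $q\notin\{3,5,7,9,11,27\}$ via the elementary chain $(q^m-1)/(q-1)^m\le (11/10)^m\le 24^{(m-1)/3}/m\le (p-1)!^{(m-1)/(p-2)}/m$, declares the remaining parts ``entirely similar'', and settles the ``Moreover'' clause by the single constant comparison $\alpha(q)^{1/m(q)}/(q-1)\le (p-1)!^{(1-1/m(q))/(p-2)}$. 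Two small corrections to your narrative that do not affect the strategy: $h_p$ is defined for all positive real arguments, so no floor/ceiling issues arise when $e\nmid m$, and the leftover factor $q^m/(q^e-1)^{m/e}$ is exponential in $m$ (with small base), not polynomial, but it is still dominated by the extra constants in $C(q,e)$ together with the growth of $h_p$, so your rate-comparison-plus-finite-check scheme goes through.
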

\begin{proof}
The arguments to prove (i)--(vii) are entirely similar, so we will just prove (vi). So assume that $p>2$ and that $q\not\in\{3,5,7,9,11,27\}$. Clearly, we may also assume that $m\geq 2$. Define $k(p):=20^{1/4}$ if $p=3$, and $k(p):=(p-1)!^{1/(p-2)}$ if $p>3$. To prove the first part of (vi), it suffices to show that
\begin{align}\label{lab:bdbii}
    \left(\dfrac{q^m-1}{(q-1)^m}\right)^t\le \dfrac{k(p)^{t(m-1)}}{m^t}.
\end{align}
To this end, suppose first that $p>3$. Then since $q\geq 11$, we have $(q^m-1)/(q-1)^m\le q^m/(q-1)^m\le (11/10)^m$. Note also that $(11/10)^m\le 24^{(m-1)/3}/m$ for $m\geq 3$. Thus, since the function $n\rightarrow n!^{1/(n-1)}$ is non-decreasing for $n\in\mathbb{N}$, we have   
\begin{align*}
 \dfrac{q^m-1}{(q-1)^m}\le\left(\dfrac{5}{4}\right)^m\le \dfrac{24^{(m-1)/3}}{m}\le \dfrac{(p-1)!^{(m-1)/(p-2)}}{m}
\end{align*}
for $m\geq 2$. The bound at (\ref{lab:bdbii}) follows. The case $p=3$ and $q\geq 81$ is entirely similar.

Finally assume that $q\in\{13,19\}$, and let $m(q)$ and $\alpha(q)$ be as defined in (vi). Then one can check that $\alpha(q)^{1/m(q)}/(q-1)$ is less than $(p-1)!^{(1-1/m(q))/(p-2)}$ in each case, and this gives us what we need. 
\end{proof}

We are now ready to prove Theorems \ref{thm:JordanAnalogue} and \ref{thm:JordanIrr}. 
\begin{proof}[Proof of Theorems \ref{thm:JordanAnalogue} and \ref{thm:JordanIrr}] Let $G$ be a subgroup [respectively irreducible subgroup] of $\GL_n(q)=\GL(V)$. By Corollary \ref{cor:MpCor}, it will suffice to prove that 
\begin{align}\label{lab:NEED}
M_p(G)\le f(n,q)\text{ [resp. $i(n,q)$],}   
\end{align}
where $f(n,q)$ [resp. $i(n,q)$] is as defined in the statement of Theorem \ref{thm:JordanAnalogue} [resp. Theorem \ref{thm:JordanIrr}].

To this end, let $0=V_1<\hdots<V_d=V$ be a $G$-composition series for $V$. Then it is well-known that $G/O_p(G)$ is isomorphic to a completely reducible subgroup of $\GL(W)\cong \GL_n(q)$, where $W$ is the $\mathbb{F}_q[G]$-module $V_1\oplus V_2/V_1\oplus\hdots\oplus V_d/V_{d-1}$. Since $M_p(G)=M_p(G/O_p(G))$, we may therefore assume, for the remainder of the proof, that $G$ is a completely reducible subgroup of $\GL(V)$.

Suppose first that $G$ is irreducible. Then $G$ may be embedded as a subgroup of a wreath product $R\wr \Sym_t$, where $R\le \mathrm{GL}_m(q)$ is primitive, $mt=n$, and $G\cap R^t$ is a subdirect product in a direct product $A^t$ of $t$ copies of a normal subgroup $A$ of $R$ (we allow the case $t=1$, in which case $G\cong R$ is primitive). We now define a function $g$, as follows: if $R$ is not one of the groups from Theorem \ref{thm:MainTheoremPrimLin}(i), and $(R,m,q)$ is not $(2^{1+4}.\Sp_4(2),4,3)$, then define $g(R):=m_{p'}(q^m-1)$. If $R$ is one of the groups from Theorem \ref{thm:MainTheoremPrimLin}(i), then define $g(R):=P(G,m,q)$, where $P(G,m,q)$ is in the fourth column of Table \ref{tab:EXMainTheoremPrimLin}. Finally, if $(R,m,q)=(2^{1+4}.\Sp_4(2),4,3)$, then define $g(R):=M_3(R)=576$. Then by Corollary \ref{cor:MTPL}, $M_p(R)\le g(R)$ in each case. It then follows from Corollary \ref{cor:MpCor} that $M_p(G)\le g(R)^tM_p(G/G\cap R^t)\le g(R)^th_p(t)$, where the last inequality follows from Corollary \ref{cor:MainTheoremPermCor}. One can now deduce that $M_p(G)\le i(n,q)\le f(n,q)$ by inspecting the values of $g(R)$, and applying Lemma \ref{lem:mtbound} in each case.   

We can now complete the proof of (\ref{lab:NEED}) by induction on $n$. The irreducible case above can serve as the base case for induction, so assume that $G$ is reducible. Then $G$ embeds as a subgroup of $\GL_r(q)\times\GL_s(q)$, where $r,s\geq 1$, and $r+s=n$. Let $N:=G\cap (\GL_r(q)\times 1)$. Then the inductive hypothesis implies that $M_p(N)\le f(r,q)$ and $M_p(G/N)\le f(s,q)$. Since $M_p(G)\le M_p(N)M_p(G/N)$ by Corollary \ref{cor:MpCor}, and $f(r,q)f(s,q)=f(r+s,q)$, the claim (\ref{lab:NEED}) follows.
\end{proof}

\end{document}